\documentclass{article}
\usepackage[left=3.18cm,right=3.18cm,top=2.54cm,bottom=2.54cm]{geometry}
\usepackage{indentfirst}
\usepackage{amsfonts}
\usepackage{amsmath}
\usepackage{amssymb}
\usepackage{amsthm}
\usepackage{enumerate}
\usepackage{hyperref}
\usepackage{cleveref}
\usepackage{tikz,tikz-cd}
\usepackage{extpfeil}
\usepackage{setspace}
\usepackage{xcolor}


\newcommand\QQ{\mathbf{Q}}

\newcommand\RR{\mathbf{R}}
\newcommand\NN{\mathbf{N}}
\newcommand\ZZ{\mathbf{Z}}
\numberwithin{equation}{section}
\newcommand\n{\mathbf{n}}

\theoremstyle{definition}

\newtheorem{theorem}{Theorem}[section]
\newtheorem{definition}[theorem]{Definition}
\newtheorem{proposition}[theorem]{Proposition}

\newtheorem{assumption}{Assumption}
\newtheorem{corollary}[theorem]{Corollary}
\newtheorem{lemma}[theorem]{Lemma}

\newtheorem{remark}[theorem]{Remark}
\newtheorem{example}[theorem]{Example}

\DeclareMathOperator{\Tor}{Tor}
\DeclareMathOperator{\im}{im}
\DeclareMathOperator{\Hom}{Hom}
\DeclareMathOperator{\deR}{deR}
\DeclareMathOperator{\GL}{GL}

\begin{document}
\title{\textbf{Characterization of differential K-theory by the hexagon diagram}}
\author{Jiahao Hu}
\date{}
\maketitle
\begin{abstract}
   Using a canonical topology on differential K-theory induced from the Frech\'et space topology on differential forms and the discrete topology on topological K-theory, we prove that differential K-theory is uniquely determined by the character diagram up to a unique natural equivalence, thus giving an affirmative answer to a question asked by Simons and Sullivan in \cite{SS10}. We further deduce rigidity results including that there is a unique way of realizing $\RR/\ZZ$-K-theory as the flat theory, strengthening the results of \cite{BS10}.
\end{abstract}

\section{Introduction}
The purpose of this paper is to prove an axiomatic characterization theorem for differential K-theory similar to the one obtained by Simons and Sullivan in \cite{SS08} for differential cohomology; they showed that differential cohomology as a functor fitting into certain hexagon diagram is unique up to a unique natural equivalence. In a later work \cite{SS10} they further proved that differential K-theory $\hat{K}$ fits into a similar hexagon diagram below, called the \textbf{character diagram}, with exact boundaries and diagonals; naturally they asked whether or not such diagram axiomatically characterizes differential K-theory. 

\begin{center}
\setlength{\unitlength}{0.4cm}
\begin{picture}(24,16)
\put(5.5,1.5){$0$} 
\put(18.5,1.5){$0$} 
\put(5.5,14.5){$0$} 
\put(18.5,14.5){$0$} 
\put(7.5,4){$\Omega^{od}/\Omega_U$} 
\put(15.5,4){$\Omega_{BU}$} 
\put(3.6,8){$H^{od}(\RR)$} 
\put(11.8,8){$\hat{K}$} 
\put(18.5,8){$H^{ev}(\RR)$} 
\put(7.5,12){$K^{-1}_{\RR/\ZZ}$} 
\put(15.8,12){$K$} 
\put(10.5,12.2){\vector(1,0){4}} 
\put(10.8,4.2){\vector(1,0){3.7}} 
\put(5.5,9){\vector(1,1){2.3}} 
\put(5.5,7.5){\vector(1,-1){2.2}} 
\put(9.3,11.2){\vector(1,-1){2.2}} 
\put(9.3,5.2){\vector(1,1){2.3}} 
\put(13,9){\vector(1,1){2.3}} 
\put(13,7.5){\vector(1,-1){2.3}} 
\put(16.8,11.3){\vector(1,-1){2.3}} 
\put(16.8,5.1){\vector(1,1){2.3}} 
\put(6.2,2.2){\vector(1,1){1.5}} 
\put(6.2,14.2){\vector(1,-1){1.3}} 
\put(16.8,12.8){\vector(1,1){1.5}} 
\put(16.9,3.6){\vector(1,-1){1.4}} 
\put(12,4.5){\small{$d$}}
\put(12,12.5){\small{$\beta$}}
\put(9.5,6.5){\small{$i$}}
\put(11,10.2){\small{$j$}}
\put(13,10){\small{$\delta$}}
\put(14.3,6.5){\small{$ch$}}
\put(5,5.6){\small{$\deR$}} 
\put(18.2,5.6){\small{$\deR$}} 
\put(3.8,10.4){\small{mod $\ZZ$}}
\put(18.2,10.2){\small{$c$}}
\put(2,6){\vector(1,1){1.4}} 
\put(2,10.5){\vector(1,-1){1.4}} 
\put(21,9){\vector(1,1){1.4}}
\put(21,7.2){\vector(1,-1){1.4}}
\end{picture}
\end{center}
The sequence along the upper boundary in the above character diagram
\[
\to H^{od}(\RR)\xrightarrow{\text{mod } \ZZ} K^{-1}_{\RR/\ZZ}\xrightarrow{\beta} K\xrightarrow{c} H^{ev}(\RR)\to 
\]
is identified, via Chern character (tensored with $\RR$), with the Bockstein exact sequence for complex K-theory associated to the coefficient exact sequence $0\to \ZZ\to \RR\to \RR/\ZZ\to 0$, and the sequence along the lower boundary
\[
\to H^{od}(\RR)\xrightarrow{deR} \Omega^{od}/\Omega_U\xrightarrow{d} \Omega_{BU}\xrightarrow{deR} H^{ev}(\RR)\to 
\]
comes from de Rham theory of representing real cohomology classes by differential forms.  The groups and morphisms in this diagram will be explained in details in \Cref{sec3}.

Our main result is that the character diagram indeed determines differential K-theory.
\begin{theorem}[Uniqueness]\label{maintheorem}
Any two differential K-functors $(\hat{K}',i',j',\delta',ch')$ and $(\hat{K},i,j,\delta,ch)$ are naturally equivalent via a natural transformation $\Phi:(\hat{K}',i',j',\delta',ch')\to (\hat{K},i,j,\delta,ch)$; such $\Phi$ is unique.
\end{theorem}
Here, by a \textbf{differential K-functor} we mean a $5$-tuple $(\hat{K},i,j,\delta,ch)$, consisting of a contravariant functor $\hat{K}$ from the category of \textit{compact manifolds with corners} (with smooth mappings) to the category of abelian groups, and natural transformations $(i,j,\delta,ch)$ fitting into the character diagram with exact diagonals; by a natural transformation from a differential K-functor $(\hat{K}',i',j',\delta',ch')$ to another $(\hat{K},i,j,\delta,ch)$ we mean a natural transformation $\Phi:\hat{K}'\to\hat{K}$ such that $\Phi\circ i=i'$, $\Phi\circ j'=j$, $\delta\circ\Phi=\delta'$ and $ch\circ\Phi=ch'$.

Prior to our work here, the question of axiomatically characterizing differential K-theory and other differential refinements of generalized cohomology theories has been throughly analyzed by Bunke and Schick \cite{BS10} under a different set of axioms from which uniqueness results are obtained. The major difference between their axioms and the character diagram is the absence of the group $K^{-1}_{\RR/\ZZ}$ in their axioms; and their analysis does not directly answer Simons and Sullivan's question\footnote{More recently Ishan Mata \cite{MI21} obtained a partial affirmative answer by employing homological algebra.}.

Nevertheless, their techniques can be appropriately modified to produce between any two given functors fitting into the above character diagram a natural transformation compatible with all the maps except for $K^{-1}_{\RR/\ZZ}\to\hat{K}$. The deviation from this compatibility would roughly be a natural transformation $K^{-1}_{\RR/\ZZ}\to H^{od}(\RR)\cong K^{-1}_{\RR}$, which is not a priori zero because there are non-trivial homomorphisms $\RR/\ZZ\to \RR$.

To prove this compatibility, we show that there are canonical and natural topologies on all the groups in the character diagram; differential forms carry Fr\'echet space topology, topological K-theory carries discrete topology, and all other groups inherit topologies from these two. The desired compatibility will be proved in the spirit of the fact that there is no non-trivial \textit{continuous} homomorphisms from $\RR/\ZZ$ into $\RR$; our uniqueness theorem will then follow at once.

The topology on $\hat{K}$ is expected, for example from the structured bundle model of Simons and Sullivan \cite{SS10}, and the idea of a topology on differential K-theory is certainly around among experts; here we make it precise without referring to any particular model. Moreover, we shall prove that this topology is strong enough to functorially recover all the surrounding groups in the character diagram; the topological K-theory is the group of connected components of $\hat{K}$ and $K^{-1}_{\RR/\ZZ}$ is the closure of the torsion subgroup of $\hat{K}$. It should also be clear from our argument that the same holds for other differential refinements of generalized cohomology theories, so we believe it is more appropriate to study differential cohomology theories as functors valued in the category of abelian \textit{topological} groups; we hope this can be a useful insight for future studies.

Compared to Bunke and Schick's approach, ours has the extra benefit of showing that there is a unique way of realizing $K^{-1}_{\RR/\ZZ}$ as the flat theory (the kernel of the curvature map $ch: \hat{K}\to \Omega_{BU}$), see \Cref{rigidity}; this result is not deducible from Bunke and Schick's work in \cite{BS10}\footnote{as stated in \cite[Theorem 7.12]{BS10}.} and answers a folklore question. 

We should point out that our argument cannot prove that \textit{odd} differential K-theory is unique under the character diagram axiom. The counter example given in \cite{BS10} continues to hold so we shall stick to \textit{even} differential K-theory throughout.
\subsection*{Organization}
We organize this paper by starting with a general discussion on abelian topological groups; in \Cref{sec2} we functorially build hexagon diagrams for suitable abelian topological groups. In \Cref{sec3} we topologize the character diagram and prove it is naturally isomorphic to the hexagon diagram for the central group $\hat{K}$. \Cref{sec4} is devoted to adapting the technical tools of \cite{BS10} to our situation. We prove our uniqueness theorem in \Cref{sec5} and conclude with two rigidity results in \Cref{sec6}; one generalizes \cite[Lemma 1.1]{SS08} and the other concerns realizing $K^{-1}_{\RR/\ZZ}$ as the flat theory.

\subsection*{Acknowledgement}
The author thanks Dennis Sullivan for suggesting this problem to him and for fruitful discussions, Runjie Hu for useful comments on an early draft of this paper, and anonymous referee for constructive advices.

\section{Hexagon diagram for abelian topological groups}\label{sec2}
Throughout, subgroups, quotient groups and product groups have subspace, quotient, and product topologies respectively. The field of real numbers $\RR$ is equipped with its standard topology.
\subsection{Strict homomorphism} Recall that every group homomorphism $\phi:G\to H$ yields a group isomorphism $\bar{\phi}:G/\ker\phi\xrightarrow{\cong} \im \phi$. In contrast, if $G,H$ are \textit{topological} groups and $\phi$ is a \textit{continuous} homomorphism, then we cannot conclude $\bar{\phi}$ is an isomorphism of topological groups: a continuous bijection need not be a homeomorphism. This leads to the following notion.
\begin{definition}
	A continuous homomorphism between topological groups is said to be \textbf{strict} if it is open onto its image, i.e. it takes open sets to open subsets of its image.
\end{definition}

\begin{lemma}\label{lem:strictprop}
	\begin{enumerate}[(i)]
		\item Let $N$ be a normal subgroup of a topological group $G$. Then the quotient map $\pi: G\to G/N$ is open. In particular $\pi$ is strict.
		\item Let $\pi: G\to G/N$ be a quotient of topological groups. Then a continuous homomorphism $\phi:G/N\to H$ is strict if and only if $\phi\circ\pi: G\to H$ is strict. 
		\item 	A continuous homomorphism $\phi:G\to H$ is strict if and only if it induces an isomorphism of topological groups $\overline{\phi}:G/\ker\phi\xrightarrow{\cong} \im\phi$.
	\end{enumerate}
\end{lemma}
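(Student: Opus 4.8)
The plan is to prove the three statements in order, using each to bootstrap the next, and keeping in mind throughout that for a \emph{surjective} continuous homomorphism ``open'' and ``open onto its image'' mean the same thing, so that such a map is strict precisely when it is open.

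For (i): given an open set $U\subseteq G$, I would show the saturation $\pi^{-1}(\pi(U))=\bigcup_{n\in N} nU=NU$ is open in $G$, being a union of translates of $U$; by the definition of the quotient topology this forces $\pi(U)$ to be open in $G/N$. Since $\pi$ is onto, this says exactly that $\pi$ is strict.

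For (ii): the forward direction is immediate from (i) --- if $\phi$ is strict and $U\subseteq G$ is open, then $\pi(U)$ is open in $G/N$, hence $\phi(\pi(U))$ is open in $\im\phi=\im(\phi\circ\pi)$, so $\phi\circ\pi$ is strict. For the converse, suppose $\phi\circ\pi$ is strict and let $V\subseteq G/N$ be open; then $\pi^{-1}(V)$ is open in $G$, and surjectivity of $\pi$ gives $\phi(V)=(\phi\circ\pi)(\pi^{-1}(V))$, which is open in $\im(\phi\circ\pi)=\im\phi$ by hypothesis. Hence $\phi$ is strict.

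For (iii): factor $\phi$ as $G\xrightarrow{\pi} G/\ker\phi\xrightarrow{\overline\phi}\im\phi\hookrightarrow H$, noting that $\overline\phi$ is a continuous bijection because $\phi$ is continuous and $\pi$ is a topological quotient. If $\overline\phi$ is an isomorphism of topological groups, then $\phi(U)=\overline\phi(\pi(U))$ is open in $\im\phi$ for every open $U\subseteq G$, using (i) together with the fact that $\overline\phi$ is a homeomorphism, so $\phi$ is strict. Conversely, if $\phi$ is strict, I would apply (ii) with $N=\ker\phi$ to the map $\overline\phi$, regarded as landing in $H$ via the inclusion: since $\overline\phi\circ\pi=\phi$ is strict, $\overline\phi$ is strict, i.e. open onto $\im\phi$; being also a continuous bijection onto $\im\phi$, it is a homeomorphism, hence an isomorphism of topological groups. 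I do not anticipate a genuine difficulty here --- the whole argument is point-set bookkeeping --- and the only place to be careful is the interplay between strictness and the subspace topology on images under composition, which causes no trouble precisely because everything is routed through the surjection $\pi$, for which the image is the full quotient and no subspace-of-a-subspace subtlety ever arises.
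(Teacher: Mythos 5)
Your proposal is correct and follows essentially the same route as the paper: the saturation argument for (i), the two-directional argument via $\phi(U)=(\phi\circ\pi)(\pi^{-1}(U))$ for (ii), and the canonical factorization through $G/\ker\phi$ combined with (ii) for (iii). No gaps; the handling of ``strict = open onto image'' for the bijection $\overline\phi$ matches the paper's implicit final equivalence.
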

\begin{proof}
	\begin{enumerate}[(i)]
		\item Let $U$ be an open subset of $G$. Since $\pi^{-1}(\pi(U))=\bigcup\limits_{g\in N} gU$ is open in $G$, by definition of quotient topology $\pi(U)$ is open in $G/N$.
		\item The `only if' part follows from the openness of $\pi$ guaranteed by (i). Now assume $\phi\circ\pi$ is strict and $U$ is an open subset of $G/N$. Then by continuity of $\pi$, we see
	$\phi(U)=\phi\circ\pi(\pi^{-1}(U))$ is open in $\im(\phi\circ\pi)=\im\phi$. Therefore $\phi$ is strict. This proves the `if' part.
		\item Consider the canonical factorization $\phi: G\xrightarrow{\pi}G/\ker\phi\xrightarrow{\bar{\phi}}\im\phi\xrightarrow{\iota} H$.
	Then
	\begin{align*}
		\phi\text{ is strict}&\Leftrightarrow \bar{\phi}\circ\pi\text{ is strict} \quad\text{(by definition)}\\
		&\Leftrightarrow\bar{\phi}\text{ is strict} \quad\text{(by (ii))}\\
		&\Leftrightarrow\bar{\phi}\text{ is an isomorphism of topological groups.}
	\end{align*}
	\end{enumerate}
\end{proof}
Compositions of strict homomorphisms need not be strict. For example, let $\iota_A: A\to G$ be an embedding of topological groups and $\pi_B: G\to G/B$ be a quotient by a normal subgroup $B$. Both $\iota_A$ and $\pi_B$ are clearly strict, but their composition $\pi_B\circ\iota_A$ is not strict in general, for instance take $G=\RR^2$, $B=\ZZ^2$ and $A$ to be a line passing through the origin with irrational slope.

 Here we state an easy yet useful sufficient condition that ensures the strictness of $\pi_B\circ \iota_A$.
 
\begin{lemma}\label{lem:secondiso} If either $A$ or $B$ is an open subgroup of $G$, then the composition $\pi_B\circ\iota_A: A\to G/B$ is open. In particular, $\pi_B\circ\iota_A$ is strict and induces an isomorphism of topological groups $A/A\cap B\cong AB/B$.
\end{lemma}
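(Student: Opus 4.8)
The plan is to prove the two assertions in order: first that $\pi_B\circ\iota_A$ is open, and then to deduce strictness and the isomorphism statement as corollaries. The openness claim is the heart of the matter, and it splits into two cases according to which of $A$ or $B$ is assumed open.

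First I would treat the case where $B$ is an open subgroup of $G$. Let $U\subseteq A$ be open in the subspace topology, so $U=A\cap V$ for some open $V\subseteq G$. I want to show $\pi_B(U)$ is open in $G/B$, and by \Cref{lem:strictprop}(i) (openness of $\pi_B$) it suffices to show that $\pi_B^{-1}(\pi_B(U))=UB$ is open in $G$. Here the key trick is that since $B$ is open, $UB=\bigcup_{b\in B}Ub$ is a union of translates of the open set... wait, $U$ need not be open in $G$. The correct observation is rather that $UB=(A\cap V)B$, and since $B$ is open, for any $a\in A\cap V$ the set $aB$ is an open neighborhood of $a$; but I need $aB\subseteq UB$, which holds because $a\in U$. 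Hence $UB=\bigcup_{a\in U}aB$ is open in $G$, being a union of open sets. Therefore $\pi_B(U)$ is open and $\pi_B\circ\iota_A$ is open.

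Next, the case where $A$ is an open subgroup of $G$. Then the inclusion $\iota_A$ is an open map: any open $U\subseteq A$ is, by openness of $A$ in $G$, also open in $G$. Composing with the open map $\pi_B$ from \Cref{lem:strictprop}(i) gives that $\pi_B\circ\iota_A$ is open. This case is essentially immediate.

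Finally I would extract the remaining conclusions. Since $\pi_B\circ\iota_A:A\to G/B$ is open onto $G/B$ and in particular onto its image $AB/B$, it is strict by definition. The kernel of $\pi_B\circ\iota_A$ is $A\cap B$, so by \Cref{lem:strictprop}(iii) the induced map $\overline{\pi_B\circ\iota_A}:A/(A\cap B)\xrightarrow{\cong} \im(\pi_B\circ\iota_A)=AB/B$ is an isomorphism of topological groups. I do not expect any serious obstacle here; the only subtle point is resisting the temptation to claim $U$ is open in $G$ in the first case, and instead writing $UB$ as a union of the genuinely open translates $aB$ indexed by $a\in U$.
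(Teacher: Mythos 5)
Your proof is correct. The case where $A$ is open is identical to the paper's: $\iota_A$ and $\pi_B$ are each open, so their composite is. Where you diverge is the case where $B$ is open. The paper disposes of it in two lines by observing that an open subgroup is automatically closed, hence $B$ is clopen, hence $G/B$ is discrete, hence \emph{every} map into $G/B$ is open. You instead argue directly: for $U$ open in $A$, the saturation $\pi_B^{-1}(\pi_B(U))=UB=\bigcup_{a\in U}aB$ is a union of translates of the open set $B$, hence open, so $\pi_B(U)$ is open by the quotient topology (or by openness of $\pi_B$). Both are valid. The paper's route is shorter but leans on two standard facts about topological groups (open subgroups are closed; quotients by clopen normal subgroups are discrete); yours is more self-contained and makes the mechanism visible --- the point being exactly the one you flag, that $U$ itself need not be open in $G$ but its $B$-saturation is. Your concluding deductions (strictness by definition, the isomorphism $A/(A\cap B)\cong AB/B$ via \Cref{lem:strictprop}(iii)) match what the paper leaves implicit.
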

\begin{proof}
	If $A$ is open, then both $\iota_A$ and $\pi_B$ are open, hence $\pi_B\circ\iota_A$ is open. If $B$ is open, then since open subgroups are also closed, $B$ is clopen (i.e. closed and open) in $G$. Recall that the quotient of a topological group by a clopen normal subgroup is discrete, so $G/B$ is discrete and therefore all maps into $G/B$ are open.
\end{proof}

\begin{definition}
	A sequence of continuous homomorphisms among topological groups is \textbf{strictly exact} if it is an exact sequence of groups in which all the homomorphisms are strict.
\end{definition}

\begin{example}\label{example:twosubgrps}\footnote{This example is inspired by the work of Ishan Mata in \cite{MI21}, and should be compared to Lawson spark.} Let $A,B$ be normal subgroups of a topological group $G$. Assume either $A$ or $B$ is open, then it follows from \Cref{lem:secondiso} that the following commutative diagram has strictly exact rows and columns.
\begin{center}
\begin{tikzcd}
            & 1     \arrow[d]           & 1 \arrow[d]             & 1 \arrow[d]  &   \\
1 \arrow[r] & {A\cap B} \arrow[r] \arrow[d] & {B} \arrow[r] \arrow[d] & {B/A\cap B} \arrow[r] \arrow[d] & 1 \\
1 \arrow[r] & {A} \arrow[r] \arrow[d] & {G} \arrow[r] \arrow[d] & {G/A} \arrow[r] \arrow[d] & 1 \\
1 \arrow[r] & {A/A\cap B} \arrow[r] \arrow[d] & {G/B}\arrow[r] \arrow[d] & {G/AB} \arrow[r] \arrow[d] & 1 \\
            & 1                         & 1                       & 1           &  
\end{tikzcd}
\end{center}
\end{example}

\subsection{Hexagon diagram}
We shall now derive from \Cref{example:twosubgrps} a hexagon diagram for a suitable \textit{abelian} topological group $G$  with respect to its two \textit{closed} subgroups: the identity connected component $G_e$, and the \textit{closure} of torsion $T=\overline{\Tor}(G)$. Technical assumptions on $G$ are needed.
\begin{assumption}\label{A1}
	Assume that $G$ is a locally connected, $G_e\cap T$ is connected and $G/(G_e+T)$ is torsion-free.
\end{assumption}
 
Recall that the quotient of a topological group by a closed subgroup is Hausdorff. Thus $\pi_0 G=G/G_e$ and $G/T$ are both Hausdorff. Further, since $G$ is assumed to be locally connected, $G_e$ is not only closed but also open in $G$, therfore the condition of \Cref{example:twosubgrps} is met. Moreover since $G_e$ is clopen, we see that $\pi_0 G$ is discrete.

\begin{lemma} Under \cref{A1}, we have canonical isomorphisms and equalities of abelian topological groups:
	\begin{enumerate}[(i)]
		\item $G_e\cap T=\overline{\Tor}(G_e)=T_e$
		\item $G_e/(G_e\cap T)\cong (G_e+T)/T=(G/T)_e$
		\item $T/(G_e \cap T)\cong(G_e+T)/G_e=\Tor(\pi_0 G)$
	\end{enumerate}
\end{lemma}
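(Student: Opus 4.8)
The plan is to establish each of the three claims by identifying the relevant subgroups of $G$ and then invoking the isomorphism theorems for topological groups that were set up in \Cref{lem:secondiso} and \Cref{lem:twosubgrps}. The overarching strategy is: (a) prove the set-theoretic/algebraic identities among torsion subgroups and their closures, using the hypotheses in \Cref{A1}; (b) upgrade the resulting algebraic isomorphisms to \emph{topological} isomorphisms by checking the openness hypothesis of \Cref{lem:secondiso} (which holds because $G_e$ is open, $G$ being locally connected). Throughout I would keep in mind that $G_e$ is clopen, so $\pi_0 G = G/G_e$ is discrete and $\Tor(\pi_0 G)$ carries the discrete topology.

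For (i), I would first show $\overline{\Tor}(G_e) \subseteq G_e \cap T$: every torsion element of $G_e$ is a torsion element of $G$, so lies in $T$, and $G_e$ is closed, so the closure stays inside $G_e$; thus $\overline{\Tor}(G_e)\subseteq G_e\cap T$. For the reverse inclusion, note $G_e\cap T$ is, by \cref{A1}, connected, and it is closed in $G_e$; the point is to see it equals $\overline{\Tor}(G_e)$. Since $G_e$ is open in $G$, $T\cap G_e$ is open in $T=\overline{\Tor}(G)$, hence $T\cap G_e = \overline{\Tor}(G)\cap G_e$ contains $\Tor(G)\cap G_e = \Tor(G_e)$ as a dense subgroup (density because an open neighborhood of a point in $T$ meets $\Tor(G)$, and intersecting with the open set $G_e$ keeps us inside $\Tor(G_e)$). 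Therefore $G_e\cap T = \overline{\Tor(G_e)}$ computed inside $G_e$, which is $\overline{\Tor}(G_e)$. The equality with $T_e$: $T_e$ is the identity component of $T$; since $G_e\cap T$ is connected and open in $T$ and contains the identity, it is contained in $T_e$; conversely $T_e\subseteq G_e$ because $T_e$ is connected and contains $e$, so $T_e\subseteq G_e\cap T$. Hence all three coincide. (I would double-check that "locally connected" is indeed what forces $G_e$ open — that is exactly the remark made just before the lemma.)

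For (ii) and (iii), I would apply \Cref{lem:secondiso} with the open subgroup $A=G_e$ and the closed subgroup $B=T$ (noting $G_e+T$ is a subgroup since both are subgroups of an abelian group, and it is open, being a union of cosets of the open subgroup $G_e$). \Cref{lem:secondiso} directly gives topological isomorphisms $G_e/(G_e\cap T)\cong (G_e+T)/T$ and $T/(G_e\cap T)\cong (G_e+T)/G_e$. It remains to identify $(G_e+T)/T$ with $(G/T)_e$ and $(G_e+T)/G_e$ with $\Tor(\pi_0 G)$. For the first: $(G_e+T)/T$ is the image of the connected open subgroup $G_e$ under the quotient map $G\to G/T$, hence it is a connected open subgroup of $G/T$ containing the identity, so it lies in $(G/T)_e$; conversely $(G/T)_e$ is contained in the image of $G_e$ because... here is where I expect the work: I would argue that $G/T$ is locally connected (quotient of a locally connected space by an open map is locally connected, and $G\to G/T$ is open by \Cref{lem:strictprop}(i)), so $(G/T)_e$ is open, and its preimage in $G$ is an open subgroup containing $G_e$; combined with the reverse inclusion this forces equality $(G_e+T)/T=(G/T)_e$. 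For the second, $(G_e+T)/G_e$ is the image of $T=\overline{\Tor}(G)$ in $\pi_0 G = G/G_e$; since $\pi_0 G$ is discrete, the image of a dense subgroup is all of the image, so $(G_e+T)/G_e$ equals the image of $\Tor(G)$, which consists of torsion elements of $\pi_0 G$; the torsion-freeness hypothesis $G/(G_e+T)$ torsion-free is exactly what guarantees that every torsion element of $\pi_0 G$ already lies in $(G_e+T)/G_e$, giving equality with $\Tor(\pi_0 G)$.

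The main obstacle I anticipate is the careful handling of the density arguments in (i) and the second half of (iii): one must be precise that "closure of torsion" interacts correctly with the open subgroup $G_e$ and with the discrete quotient $\pi_0 G$, and that the hypotheses of \Cref{A1} ($G_e\cap T$ connected, $G/(G_e+T)$ torsion-free) are each used exactly once and are genuinely needed — the connectedness hypothesis to pin down $T_e$ in (i), and the torsion-freeness to get the reverse inclusion in (iii). The topological upgrades, by contrast, are essentially automatic once one observes $G_e$ (hence $G_e+T$) is open and applies \Cref{lem:secondiso} and the openness of quotient maps from \Cref{lem:strictprop}(i).
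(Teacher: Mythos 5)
Your proposal follows essentially the same route as the paper: part (i) via the interaction of the clopen subgroup $G_e$ with closures of torsion and with connected components, and parts (ii)--(iii) via \Cref{lem:secondiso} (applicable because $G_e$ is open, $G$ being locally connected), followed by identifying $(G_e+T)/T$ with $(G/T)_e$ and $(G_e+T)/G_e$ with $\Tor(\pi_0 G)$ using discreteness of $\pi_0 G$ and the torsion-freeness of $G/(G_e+T)$. Parts (i) and (iii) are correct as written and match the paper's argument (your density argument for $G_e\cap T=\overline{\Tor}(G_e)$ is just an unwound version of the paper's ``take closures and use that $G_e$ is clopen'').

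The one place the write-up goes wrong is the concluding inference in (ii). You correctly get $(G_e+T)/T\subseteq (G/T)_e$ (it is connected and contains the identity), but your argument for the reverse inclusion --- take the preimage in $G$ of the open set $(G/T)_e$ and note it is an open subgroup containing $G_e$ --- only yields $G_e+T\subseteq$ that preimage, i.e.\ the inclusion $(G_e+T)/T\subseteq (G/T)_e$ you already have; it does not force $(G/T)_e\subseteq (G_e+T)/T$. The repair is immediate from facts already on your page: $(G_e+T)/T$ is an \emph{open subgroup} of $G/T$ (image of the open subgroup $G_e$ under the open quotient map), hence closed, hence clopen; a clopen set containing the identity contains the identity component, so $(G/T)_e\subseteq (G_e+T)/T$. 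With this, the detour through local connectedness of $G/T$ becomes unnecessary. The paper establishes the same clopen-ness by a slightly different device, exhibiting $(G_e+T)/T$ as the kernel of the map from $G/T$ to the discrete group $G/(G_e+T)$.
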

\begin{proof}
	\begin{enumerate}[(i)]
		\item $G_e$ being clopen in $G$ implies that $G_e\cap T$ is clopen in $T$. This means $G_e\cap T$ is a union of connected components of $T$. But $G_e\cap T$ is assumed to be connected, therefore $G_e\cap T=T_e$. The second equality follows from $\Tor(G_e)=G_e\cap \Tor(G)$ by taking closure and noticing that $G_e$ is clopen. 
		\item The first isomorphism follows from \Cref{lem:secondiso}. Then the connectedness of $G_e$ implies that both $G_e/(G_e\cap T)$ and $(G_e+T)/T$ are connected. We claim that $(G_e+T)/T$ is clopen in $G/T$, then $(G_e+T)/T=(G/T)_e$ follows immediately from the connectedness of $(G_e+T)/T$. To prove the claim, notice that since $\pi_0 G$ is discrete, its quotient $G/(G_e+T)$ must also be discrete. Therefore $(G_e+T)/T$, being the kernel of $G/T\to G/(G_e+T)$, is clopen in $G/T$.
		\item Again the first isomorphism follows from \Cref{lem:secondiso}. Now observe that $(T+G_e)/G_e$ is the image of $T$ under the projection $G\to\pi_0 G$. Since $\Tor(G)$ is dense in $T$, and group homomorphisms take torsion elements to torsion elements, we have $(T+G_e)/G_e\subseteq \overline{\Tor}(\pi_0 G)=\Tor(\pi_0 G)$ (recall that $\pi_0 G$ is discrete). On the other hand, since $G/(G_e+T)$ is assumed to be torsion-free, we see that $(G_e+T)/G_e$, being the kernel of $\pi_0 G\to G/(G_e+T)$, must be contained in $\Tor(\pi_0 G)$. This proves $(G_e+T)/G_e=\Tor(\pi_0 G)$.
	\end{enumerate}
\end{proof}

\begin{corollary}\label{lem:sqrdiag}
Under \cref{A1}, we have the following commutative diagram with strictly exact rows and columns.
\begin{center}
\begin{tikzcd}
            & 0     \arrow[d]           & 0 \arrow[d]             & 0 \arrow[d]  &   \\
0 \arrow[r] & {T_e} \arrow[r] \arrow[d] & {T} \arrow[r] \arrow[d] & {\pi_0 T} \arrow[r] \arrow[d] & 0 \\
0 \arrow[r] & {G_e} \arrow[r] \arrow[d] & {G} \arrow[r] \arrow[d] & {\pi_0 G} \arrow[r] \arrow[d] & 0 \\
0 \arrow[r] & {G_e/T_e} \arrow[r] \arrow[d] & {G/T}\arrow[r] \arrow[d] & {\pi_0(G/T)} \arrow[r] \arrow[d] & 0 \\
            & 0                         & 0                       & 0           &  
\end{tikzcd}
\end{center}
\qed
\end{corollary}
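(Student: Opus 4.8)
The plan is to derive the diagram by specializing \Cref{lem:twosubgrps} to the subgroups $A=G_e$ and $B=T=\overline{\Tor}(G)$, and then relabelling the resulting array by means of the preceding lemma. First I would check that the hypothesis of \Cref{lem:twosubgrps} is met: $G_e$ and $T$ are normal (since $G$ is abelian) closed subgroups, and, crucially, $G_e$ is \emph{open} because $G$ is assumed locally connected (as noted just before the preceding lemma). Hence \Cref{lem:twosubgrps} supplies a commutative $3\times 3$ array with strictly exact rows and columns whose nine interior entries are
\[
G_e\cap T,\quad T,\quad T/(G_e\cap T),\quad G_e,\quad G,\quad G/G_e,\quad G_e/(G_e\cap T),\quad G/T,\quad G/(G_e+T),
\]
with horizontal and vertical maps the evident inclusions and projections.

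Next I would rename each entry using the canonical isomorphisms of the preceding lemma. By definition $G/G_e=\pi_0 G$. Part (i) gives $G_e\cap T=T_e$, and since $T_e$ is by definition the identity component of $T$ this also yields $T/(G_e\cap T)=T/T_e=\pi_0 T$ and $G_e/(G_e\cap T)=G_e/T_e$. For the bottom-right corner I would write $G/(G_e+T)$ as the quotient of $G/T$ by $(G_e+T)/T$ and invoke part (ii), which identifies $(G_e+T)/T$ with $(G/T)_e$; hence $G/(G_e+T)=(G/T)/(G/T)_e=\pi_0(G/T)$. Substituting these labels turns the array produced by \Cref{lem:twosubgrps} into precisely the diagram in the statement. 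Since every relabelling is through an isomorphism of topological groups, strict exactness of the rows and columns — and commutativity — are preserved.

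I do not expect a genuine obstacle here; the only thing demanding a little care is the bookkeeping needed to see that the lemma's canonical isomorphisms are compatible with the maps inherited from \Cref{lem:twosubgrps}, so that the relabelled squares really commute. This is immediate, because in each case the isomorphism in question is induced by a restriction, or a further quotient, of exactly the inclusion or projection appearing in the original array, so the identifications are natural with respect to all the structure maps.
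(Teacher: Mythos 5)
Your proposal is correct and matches the paper's intended argument exactly: the corollary is stated with no written proof precisely because it is the specialization of \Cref{lem:twosubgrps} to $A=G_e$ (open by local connectedness) and $B=T$, followed by the relabelling of entries via parts (i) and (ii) of the preceding lemma and the third isomorphism theorem for the bottom-right corner. Nothing further is needed.
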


Now if we rotate the above diagram counter-clockwise by 45 degrees, thus placing $T_e$ and $\pi_0(G/T)$ on the far left and far right respectively, and also draw an arrow connecting $T$ to $\pi_0 G$ standing for the composition, and similarly connect $G_e$ to $G/T$, then a hexagon is formed, centered around the group $G$. This diagram now looks similar to the character diagram. However note that in the character diagram, the groups on the far left and right are real vector spaces. For this reason, we consider two further constructions: the universal cover of $T_e$ and  $\pi_0 G\otimes_\ZZ\RR$. Of course, extra assumptions have to be made.

\begin{assumption}\label{A2}
	Assume that $T_e$ is connected, locally path-connected and semilocally simply connected so that it has a universal cover $\widetilde{T_e}$. Assume that $\pi_0(G/T)$ is finitely generated.
\end{assumption} 
The universal covering map $\widetilde{T_e}\to T_e$, precomposed with the inclusions $T_e\to T$ and $T_e\to G_e$, gives rise to two strict homomorphisms
\[
\widetilde{T_e}\to T,\text{ and } \widetilde{T_e}\to G_e.
\]

Next we consider $\pi_0 G\otimes_\ZZ\RR$. Since $\pi_0 T$ is torsion, the exact sequence of discrete groups $0\to \pi_0 T\to \pi_0 G\to\pi_0 (G/T)\to 0$ gives an isomorphism $\pi_0 G\otimes\RR\cong \pi_0 (G/T)\otimes \RR$. Moreover, since $\pi_0(G/T)\cong \pi_0 G/\pi_0 T\cong \pi_0 G/\Tor(\pi_0 G)$ is torsion free, we see that $\pi_0 (G/T)\hookrightarrow \pi_0 (G/T)\otimes \RR\cong\pi_0 G\otimes\RR$.  Now that $\pi_0(G/T)$ is assumed to be finitely generated, the group $\pi_0 G\otimes \RR\cong\pi_0(G/T)\otimes \RR$ is a finite dimensional $\RR$-vector space that inherits a topology from $\RR$, in which $\pi_0(G/T)$ is identified as a discrete lattice.
So the inclusion $\pi_0 (G/T)\hookrightarrow \pi_0 G\otimes \RR$, composed with the quotient maps $\pi_0 G\to \pi_0(G/T)$ and $G/T\to \pi_0(G/T)$, gives rise to two strict homomorphisms
\[
\pi_0 G\to \pi_0 G\otimes\RR,\text{ and } G/T\to \pi_0 G\otimes\RR.
\]

\begin{remark}
	It is necessary to assume that $\pi_0(G/T)$ is finitely generated, for in general the inclusion $\pi_0(G/T)\hookrightarrow \pi_0(G)\otimes_\ZZ\RR$ is not an embedding of topological groups. For example, take $\pi_0 G=\pi_0(G/T)=\QQ$ with discrete topology, then the above inclusion $\QQ\hookrightarrow\QQ\otimes_\ZZ\RR\cong\RR$ is the usual inclusion of $\QQ$ into $\RR$. But the subspace topology on $\QQ$ is not discrete.
\end{remark}

\begin{proposition}\label{prop:hexforgp}
Under \cref{A1} and \cref{A2}, the boundary and diagonal exact sequences in the following commutative diagram are strictly exact.
\begin{center}
\setlength{\unitlength}{0.35cm}
\begin{picture}(24,16)
\put(5.5,1.5){$0$} 
\put(18.5,1.5){$0$} 
\put(5.5,14.5){$0$} 
\put(18.5,14.5){$0$} 
\put(7.8,4){$G_e$} 
\put(15.5,4){$G/T$} 
\put(3.6,8){$\widetilde{T_e}$} 
\put(11.8,8){$G$} 
\put(18,8){$\pi_0 G\otimes_\ZZ\RR$} 
\put(7.8,12){$T$} 
\put(15.5,12){$\pi_0 G$} 
\put(10,12.3){\vector(1,0){4.6}} 
\put(10,4.3){\vector(1,0){4.6}} 
\put(5.2,9){\vector(1,1){2.3}} 
\put(5.2,7.5){\vector(1,-1){2.2}} 
\put(9.3,11.2){\vector(1,-1){2.2}} 
\put(9.3,5.2){\vector(1,1){2.3}} 
\put(13,9){\vector(1,1){2.3}} 
\put(13,7.5){\vector(1,-1){2.3}} 
\put(16.8,11.3){\vector(1,-1){2.3}} 
\put(16.8,5.1){\vector(1,1){2.3}} 
\put(6.2,2.2){\vector(1,1){1.5}} 
\put(6.2,14.2){\vector(1,-1){1.3}} 
\put(16.8,12.8){\vector(1,1){1.5}} 
\put(16.9,3.6){\vector(1,-1){1.4}} 
\end{picture}
\end{center}
\qed
\end{proposition}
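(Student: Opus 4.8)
The plan is to dispatch the two diagonal sequences immediately and then concentrate on the two boundary sequences, which carry the real content. The diagonal short exact sequences $0\to T\to G\to G/T\to 0$ and $0\to G_e\to G\to\pi_0 G\to 0$ are exactly the middle column and middle row of the commutative diagram in \Cref{lem:sqrdiag}, where they were already shown to be strictly exact, so here I would only point this out. I would also record, by unwinding the definitions of the arrows, that the hexagon commutes: for instance the composites $\widetilde{T_e}\to T\to G$ and $\widetilde{T_e}\to G_e\to G$ both equal the universal covering map $\widetilde{T_e}\to T_e$ followed by $T_e\hookrightarrow G$, and the composites $G\to\pi_0 G\to\pi_0 G\otimes_\ZZ\RR$ and $G\to G/T\to\pi_0 G\otimes_\ZZ\RR$ both equal the projection $G\to G/(G_e+T)$ followed by the lattice inclusion $G/(G_e+T)\hookrightarrow\pi_0 G\otimes_\ZZ\RR$, using $G/(G_e+T)\cong\pi_0 G/\Tor(\pi_0 G)\cong\pi_0(G/T)$ from the lemmas above.

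For the two boundary sequences $\widetilde{T_e}\to T\to\pi_0 G\to\pi_0 G\otimes_\ZZ\RR$ and $\widetilde{T_e}\to G_e\to G/T\to\pi_0 G\otimes_\ZZ\RR$ I would verify that every arrow is strict and that each sequence is exact at its two interior terms (the two outer arrows $\widetilde{T_e}\to T,\,\widetilde{T_e}\to G_e$ and $\pi_0 G\to\pi_0 G\otimes_\ZZ\RR,\,G/T\to\pi_0 G\otimes_\ZZ\RR$ being only required to be strict). Strictness falls into three cases. The maps $\widetilde{T_e}\to T$ and $\widetilde{T_e}\to G_e$ factor as the universal covering map $\widetilde{T_e}\to T_e$---which is open, covering maps being local homeomorphisms---followed by the topological embedding of $T_e$ into $T$, resp.\ into $G_e$, and an open map followed by a topological embedding is strict since its image then carries the subspace topology. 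The maps $T\to\pi_0 G$ and $G_e\to G/T$ are the composites $T\hookrightarrow G\twoheadrightarrow G/G_e$ and $G_e\hookrightarrow G\twoheadrightarrow G/T$; since $G_e$ is open in $G$ by \Cref{A1}, \Cref{lem:secondiso} applies directly and shows both are open, hence strict, while identifying $T/(T\cap G_e)\cong(T+G_e)/G_e$ and $G_e/(G_e\cap T)\cong(G_e+T)/T$. Finally $\pi_0 G\to\pi_0 G\otimes_\ZZ\RR$ and $G/T\to\pi_0 G\otimes_\ZZ\RR$ are the open quotient maps $\pi_0 G\twoheadrightarrow\pi_0(G/T)$ and $G/T\twoheadrightarrow\pi_0(G/T)$ followed by the inclusion of $\pi_0(G/T)$ into $\pi_0 G\otimes_\ZZ\RR$; since $\pi_0(G/T)$ is finitely generated and torsion-free by \Cref{A2} it sits in the finite-dimensional vector space $\pi_0 G\otimes_\ZZ\RR$ as a discrete lattice, and as $\pi_0 G$ and $\pi_0(G/T)$ are both discrete each composite is open onto its discrete image, hence strict.

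For exactness at the interior terms I would read off kernels and images. In the top sequence: $\im(\widetilde{T_e}\to T)=T_e$ because the covering map is surjective, whereas $\ker(T\to\pi_0 G)=T\cap G_e=T_e$ by the lemma preceding \Cref{lem:sqrdiag}, giving exactness at $T$; and $\im(T\to\pi_0 G)=(T+G_e)/G_e$, which is the image of $\pi_0 T$ in $\pi_0 G$, and this equals $\ker(\pi_0 G\to\pi_0(G/T))=\ker(\pi_0 G\to\pi_0 G\otimes_\ZZ\RR)$ by the exact sequence $0\to\pi_0 T\to\pi_0 G\to\pi_0(G/T)\to 0$ of \Cref{lem:sqrdiag} together with the injectivity of $\pi_0(G/T)\hookrightarrow\pi_0 G\otimes_\ZZ\RR$, giving exactness at $\pi_0 G$. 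The bottom sequence is handled the same way: exactness at $G_e$ because $\im(\widetilde{T_e}\to G_e)=T_e=G_e\cap T=\ker(G_e\to G/T)$, and exactness at $G/T$ because $\im(G_e\to G/T)=(G_e+T)/T=(G/T)_e=\ker(G/T\to\pi_0(G/T))=\ker(G/T\to\pi_0 G\otimes_\ZZ\RR)$.

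The step I expect to demand the most care is the strictness of these composites: strictness is not preserved under composition in general---the $\RR^2/\ZZ^2$ example in the text is precisely of the shape ``embedding followed by quotient''---so none of the six boundary arrows is strict for free, and each must be argued from the particular geometry. It is exactly here that \Cref{A1} (local connectedness, which makes $G_e$ open and $\pi_0 G$ discrete) and \Cref{A2} ($\pi_0(G/T)$ finitely generated, so that it embeds as a discrete lattice instead of a dense subgroup as in the Remark above) are used in an essential way; once strictness is secured the exactness claims are routine bookkeeping with the identifications already in place.
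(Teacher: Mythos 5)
Your proof is correct and takes essentially the same route the paper intends: the proposition is stated there with no written proof precisely because, as you observe, the two diagonals are the middle row and column of \Cref{lem:sqrdiag}, while the strictness of the six boundary arrows and the exactness at the interior terms are exactly the content of the lemma preceding \Cref{lem:sqrdiag} together with the two paragraphs constructing the maps out of $\widetilde{T_e}$ and into $\pi_0 G\otimes_\ZZ\RR$. Your assembly of these pieces, including the careful case analysis for why each composite is strict, is accurate and complete.
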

\begin{definition}
	The diagram in \Cref{prop:hexforgp} is called the \textbf{hexagon diagram} for $G$, provided $G$ satisfies \cref{A1} and \cref{A2}. The hexagon diagram is clearly functorial in $G$.
\end{definition}

\section{Topology on the character diagram} \label{sec3}
The goal of this section is to show that the smooth structure on a manifold induces natural topologies on the groups in the character diagram so that all the homomorphisms in the character diagram are continuous and strict. Moreover, with such topology the character diagram is canonically isomorphic to the hexagon diagram for $\hat{K}$.

In the character diagram, the sequence along the upper boundary in the above character diagram
\[
\to H^{od}(\RR)\xrightarrow{\text{mod } \ZZ} K^{-1}_{\RR/\ZZ}\xrightarrow{\beta} K\xrightarrow{c} H^{ev}(\RR)\to 
\]
is identified, via Chern character (tensored with $\RR$), with the Bockstein exact sequence for complex K-theory associated to the coefficient exact sequence $0\to \ZZ\to \RR\to \RR/\ZZ\to 0$, and the sequence along the lower boundary
\[
\to H^{od}(\RR)\xrightarrow{deR} \Omega^{od}/\Omega_U\xrightarrow{d} \Omega_{BU}\xrightarrow{deR} H^{ev}(\RR)\to 
\]
comes from de Rham theory of representing real cohomology classes by differential forms.  The groups and morphisms in this diagram will be explained along the way.

Now let $M$ be a compact smooth manifold with corners. We now proceed to topologize each of the groups in the character diagram for $M$. Along the way, we will prove all the homomorphisms in the character diagram are continuous and strict. Furthermore, we will identify $\Omega_{BU}$ and $K^{-1}_{\RR/\ZZ}$ with the component of identity of $\hat{K}$ and the closure of torsion of $\hat{K}$ respectively.

All functors in this section are applied to $M$ unless otherwise stated. For simplicity, we shall sometimes drop $M$ from our notation.
 
\subsection{Differential forms} The space of (real-valued smooth) differential $k$-forms $\Omega^k(M)$ has a Fr\'echet space topology as follows. Choose a Riemannian metric and a connection on $M$. If $\omega$ is a $k$-form, denote its $j^\text{th}$ covariant derivative by $D^j\omega$. Then
\[
\|\omega\|_{n}=\sum_{j=0}^n \sup|D^j \omega|
\]
(where $|\cdot|$ is induced by the Riemannian metric) is a family of seminorms making $\Omega^k(M)$ into a Fr\'echet space. Since $M$ is compact, different choices of metrics and connections yield bounded changes of the seminorms $\|\cdot\|_n$, so the topology on $\Omega^k(M)$ is independent of such choices. If $f: N\to M$ is a smooth map from a compact manifold $N$ into $M$, then by the compactness of $M,N$ and standard estimates, the pull-back homomorphism
\[
f^*: \Omega^k(M)\to\Omega^k(N)
\]
is a bounded linear operator. Similarly by standard estimates \footnote{$\|d\omega\|_n$ is controlled by $\|\omega\|_{n+1}$.} the exterior differential
\[
d:\Omega^k(M)\to\Omega^{k+1}(M)
\]
is a bounded linear operator.

%
%
%
%
\begin{definition} An \textbf{oriented smooth cycle} in $M$ is a pair $(V,f)$ consisting of a closed (i.e. compact without boundary) oriented smooth manifold $V$ and a smooth mapping $f: V\to M$.
\end{definition}
\begin{proposition}\label{exactforms}
A closed differential form $\omega$ on $M$ is exact if and only if $\int_V f^*\omega=0$ for all oriented smooth cycles $(V,f)$.
\end{proposition}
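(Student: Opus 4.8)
The plan is to prove both directions. The ``only if'' direction is immediate: if $\omega = d\eta$, then for any oriented smooth cycle $(V,f)$ with $V$ closed, Stokes' theorem gives $\int_V f^*\omega = \int_V f^*(d\eta) = \int_V d(f^*\eta) = \int_{\partial V} f^*\eta = 0$ since $\partial V = \emptyset$. So the content is entirely in the ``if'' direction.

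For the ``if'' direction, I would argue by contraposition using de Rham theory. Suppose $\omega$ is closed but not exact, so it represents a nonzero class $[\omega] \in H^k_{\mathrm{dR}}(M;\RR) \cong H^k(M;\RR)$. By the universal coefficient theorem and the fact that singular homology of a compact manifold with corners is finitely generated, $H^k(M;\RR)$ is dual to $H_k(M;\RR) = H_k(M;\ZZ)\otimes\RR$, so there is a homology class $z \in H_k(M;\ZZ)$ (or a rational multiple thereof) with $\langle [\omega], z\rangle \neq 0$; clearing denominators we may take $z$ integral. The task then reduces to representing $z$ (up to a nonzero scalar) by an oriented smooth cycle $(V,f)$ in such a way that $\langle [\omega], z\rangle = \int_V f^*\omega$.

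The key geometric input is Thom's theorem on representing homology classes: for a closed manifold every integral homology class has a nonzero integer multiple represented by a smooth map from a closed oriented manifold; since we only have a manifold with corners here, I would first observe that $M$ is homotopy equivalent to a compact manifold (e.g. smooth the corners, or use that $M$ deformation retracts onto a compact manifold-with-boundary / CW complex), and transport the statement along this homotopy equivalence — note that both sides of the claimed equivalence ($\omega$ exact; all periods vanish) are homotopy-invariant in the appropriate sense, and a smooth cycle in the model can be pushed into $M$ by a smooth map. Then $\int_V f^*\omega = \langle f_*[V], [\omega]\rangle = \langle mz, [\omega]\rangle = m\langle z,[\omega]\rangle \neq 0$, contradicting the hypothesis that all periods vanish. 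Hence $\omega$ must be exact.

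The main obstacle is the interface between the smooth and topological categories: making sure that the de Rham isomorphism identifies $\int_V f^*\omega$ with the pairing $\langle f_*[V],[\omega]\rangle$, and invoking Thom's representability theorem correctly on a manifold with corners (handled via the homotopy equivalence to an honest closed or bordered manifold, or by a direct Poincaré–Lefschetz duality argument representing a class by an embedded submanifold and taking $f$ to be the inclusion). A clean alternative that avoids Thom's theorem: use that $H_k(M;\ZZ)$ is generated, after tensoring with $\RR$, by classes of the form $f_*[V]$ for smooth maps from closed oriented manifolds — which for instance follows from the surjectivity of the natural map from oriented bordism $\Omega^{SO}_*(M)\otimes\RR \to H_*(M;\RR)$, a standard consequence of the structure of $\Omega^{SO}_*$. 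Either way, once a single cycle with nonzero period is produced, the contradiction is immediate, so I expect the write-up to be short modulo citing the representability input.
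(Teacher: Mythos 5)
Your proposal is correct and follows essentially the same route as the paper: Stokes' theorem for the ``only if'' direction, and the de Rham isomorphism $H^k_{dR}(M)\cong \Hom(H_k(M;\ZZ),\RR)$ combined with Thom's representability theorem (a positive integer multiple of any integral homology class is represented by an oriented smooth cycle) for the ``if'' direction. Your extra care about manifolds with corners is not needed, since Thom's theorem represents homology classes of the target by maps \emph{from} closed oriented manifolds and places no closedness hypothesis on the target.
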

\begin{proof}
The `only if' part follows from Stokes theorem. The `if' part follows from de Rham's theorem that integration induces an isomorphism $H^k_{dR}(M)\cong \Hom(H_k(M;\ZZ),\RR)$ and from \cite{Th54} that $\ZZ$-homology classes, after multiplied by some positive integer (depending only on $k$), can be represented by oriented smooth cycles.
\end{proof}

\begin{corollary}\label{closedsubspace}
$\Omega_{cl}^k=\{\text{closed $k$-forms}\}$ and $d\Omega^{k-1}=\{\text{exact $k$-forms}\}$ are closed vector subspaces of $\Omega^k(M)$. In particular, $\Omega_{cl}^k$ and $d\Omega^{k-1}$ are Fr\'echet spaces.
\end{corollary}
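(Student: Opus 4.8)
The plan is to prove both assertions — that $\Omega_{cl}^k$ is closed and that $d\Omega^{k-1}$ is closed — by exhibiting each as the intersection of kernels of a family of continuous linear functionals on $\Omega^k(M)$. For the closed forms, this is immediate: by \Cref{exactforms} (or rather by Stokes theorem, which is the elementary half), a $k$-form $\omega$ is closed if and only if $\int_V f^*(d\omega) = 0$ for all oriented smooth cycles $(V,f)$ of dimension $k+1$; more directly, $\omega$ is closed iff $d\omega = 0$, and since $d\colon \Omega^k(M)\to\Omega^{k+1}(M)$ is a bounded (hence continuous) linear operator between Fr\'echet spaces, $\Omega_{cl}^k = \ker d$ is a closed vector subspace. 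This already gives the first claim.

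For the exact forms, the key point is precisely \Cref{exactforms}: a closed $k$-form $\omega$ is exact if and only if $\int_V f^*\omega = 0$ for every oriented smooth cycle $(V,f)$ (necessarily of dimension $k$). For each such cycle $(V,f)$, the assignment $\omega \mapsto \int_V f^*\omega$ is a linear functional on $\Omega^k(M)$, and it is continuous: pull-back $f^*\colon \Omega^k(M)\to\Omega^k(V)$ is bounded by the discussion on pull-backs, and integration $\Omega^k(V)\to\RR$ over the compact oriented manifold $V$ is bounded with respect to the $\|\cdot\|_0$ seminorm (it is dominated by $\mathrm{vol}(V)\cdot\sup|{\cdot}|$). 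Hence $\ell_{(V,f)}\colon \omega\mapsto \int_V f^*\omega$ is continuous for each oriented smooth cycle $(V,f)$. Therefore
\[
d\Omega^{k-1} = \Omega_{cl}^k \cap \bigcap_{(V,f)} \ker \ell_{(V,f)},
\]
where the intersection runs over all oriented smooth cycles of dimension $k$. Being an intersection of the closed subspace $\Omega_{cl}^k$ with closed subspaces $\ker\ell_{(V,f)}$, the set $d\Omega^{k-1}$ is a closed vector subspace of $\Omega^k(M)$.

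Finally, a closed vector subspace of a Fr\'echet space is itself a Fr\'echet space (it is complete in the induced metric and metrizable), so both $\Omega_{cl}^k$ and $d\Omega^{k-1}$ are Fr\'echet spaces. I do not expect a genuine obstacle here; the only point requiring a little care is confirming that $\ell_{(V,f)}$ is continuous for the Fr\'echet topology, which reduces to the boundedness of $f^*$ (already established) and the trivial boundedness of integration over a compact manifold against the $C^0$-seminorm. Note that \Cref{exactforms} supplies the nontrivial inclusion $\bigcap_{(V,f)}\ker\ell_{(V,f)} \cap \Omega^k_{cl} \subseteq d\Omega^{k-1}$; the reverse inclusion is Stokes.
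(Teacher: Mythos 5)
Your proof is correct and follows essentially the same route as the paper: $\Omega_{cl}^k=\ker d$ is closed since $d$ is bounded, and $d\Omega^{k-1}$ is cut out inside $\Omega_{cl}^k$ as the common kernel of the continuous period functionals $\omega\mapsto\int_V f^*\omega$ via \Cref{exactforms}. The extra detail you supply on the continuity of these functionals (boundedness of $f^*$ plus the $C^0$-estimate for integration) is exactly what the paper leaves implicit.
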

\begin{proof}
Since $d$ is bounded, $\Omega_{cl}^k=\ker d$ is closed. For each oriented smooth $k$-dimensional cycle $(V,f)$, $\omega\mapsto \int_V f^*\omega$ is a bounded linear functional on $\Omega_{cl}^k$. By \Cref{exactforms}, $d\Omega^{k-1}$ is the intersection of the kernels of these functionals, hence $d\Omega^{k-1}$ is closed. The second assertion follows from that closed vector subspaces of Fr\'echet spaces are Fr\'echet.
\end{proof}

\subsection{De Rham and singular cohomology}
 The de Rham cohomology groups $H^*_{dR}(M)=\Omega_{cl}^*/d\Omega^{*-1}$ have induced topologies as sub-quotients of differential forms. Since quotients of Fr\'echet spaces by closed vector subspaces are Fr\'echet, by \Cref{closedsubspace} $H^k_{dR}(M)$ is a Fr\'echet space.

The singular cohomology with $\RR$-coefficients $H^*(M;\RR)$ can be topologized as follows. The $k$-th singular homology $H_k(M;\ZZ)$ is a finitely generated abelian group, on which we equip the discrete topology. Next we equip $C(H_k(M;\ZZ),\RR)=\RR^{H_k(M;\ZZ)}$, the set of continuous functions on $H_k(M;\ZZ)$, with the compact-open topology, or equivalently the product topology on $\RR^{H_k(M;\ZZ)}$; this way $C(H_k(M;\ZZ),\RR)$ is a (Hausdorff) topological vector space. Then through the isomorphism $$H^k(M;\RR)\cong \Hom(H_k(M;\ZZ),\RR)\subseteq C(H_k(M;\ZZ),\RR),$$ we can now endow $H^k(M;\RR)$ with the subspace topology, making it into a finite dimensional topological vector space. We can similarly topologize $H^k(M;\QQ)$ and it is not hard to see that $H^k(M;\QQ)$ is dense in $H^k(M;\RR)$. Indeed, choose a basis $e_i$ for the free part of $H_k(M;\ZZ)$, then a real cohomology class $l\in H^k(M;\RR)$ is determined by the real numbers $l(e_i)$. The class $l$ belongs to $H^k(M;\QQ)$ if and only if $l(e_i)\in \QQ$ for all $i$.

Now that both $H^*_{dR}$ and $H^*(\RR)$ are finite dimensional topological vector spaces, the de Rham isomorphism $H^*_{dR}\cong H^*(\RR)$ must also be a homeomorphism. We shall therefore not distinguish $H^*_{dR}(M)$ from $H^*(M;\RR)$ from now on. 

However, it is probably worth pointing out that the topology we put on $H^k(\RR)$ is clearly functorial with respect to continuous maps, while the topology on $H^k_{dR}$ is functorial with respect to smooth maps, for the de Rham groups and the de Rham isomorphisms depend a priori on the smooth structure.

\subsection{Unitary forms}
\begin{definition}
A \textbf{smooth SAC cycle} $(V,f)$ in $M$ is a closed stably almost complex (SAC) manifold $V$ together with a smooth mapping $f: V\to M$. The \textbf{period} of a closed form $\omega$ over $(V,f)$ is
\[
\int_V f^*\omega\cdot Td(V)
\]
where $Td(V)$ is the Todd class of $V$, which may be represented by a total even closed form by enriching the stable tangent bundle of $V$ with a unitary connection.
\end{definition}
Since (according to Chern-Weil theory) different choices of connections result in cohomologous differential form representatives of $Td(V)$, and since $\omega$ is closed, Stokes theorem ensures the period of $\omega$ over $(V,f)$ is independent of choices of connections on $V$. 

Recall that by definition \cite{SS10} $\Omega_{BU}(M)$ (resp. $\Omega_U(M)$) is the group of closed even (resp. odd) forms on $M$ having periods in $\ZZ$ over all even (resp. odd) dimensional smooth SAC cycles. It is proved in \cite{SS10, SS18} that $\Omega_{BU}$ contains exactly those closed even forms cohomologous to Chern characters of unitary vector bundles on $M$. Similarly, $\Omega_U$ consists of those closed odd forms cohomologous to pull-backs by maps of $M$ into the unitary group $U$ (i.e. union of $U(n)$) of the transgressed Chern character. We therefore refer to them as \textbf{unitary forms}. The spaces of unitary forms $\Omega_U(M)$ and $\Omega_{BU}(M)$, as subspaces of differential forms, are equipped with subspace topologies.

By Stokes theorem, exact forms have vanishing periods over smooth SAC cycles, so in particular exact forms are unitary forms. Also notice that given any enriched SAC cycle $(V,f)$, the map
\[
\Omega_{cl}\to \RR,\quad \omega\mapsto\int_V f^*\omega\cdot Td(V)
\]
is a bounded linear functional. Therefore since $\ZZ$ is closed in $\RR$, $\Omega_{BU}$ (resp. $\Omega_U$) is closed in $\Omega_{cl}^{ev}$ (resp. $\Omega_{cl}^{od}$). In particular, $\Omega^{od}/\Omega_U$ is Hausdorff.
\begin{remark}
	By replacing Thom's theorem with the Conner-Floyd theorem \cite{CF66} in the proof of \Cref{exactforms}, one can prove that a closed form is exact if and only if it has vanishing periods over all smooth SAC cycles.
\end{remark}

\begin{proposition}\label{prop:bottomexact}
The sequence
\[
H^{od}(\RR)\xrightarrow{\deR} \Omega^{od}/\Omega_U\xrightarrow{d} \Omega_{BU}\xrightarrow{\deR} H^{ev}(\RR)
\]
is strictly exact.
\end{proposition}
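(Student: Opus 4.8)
The plan is to establish exactness as a sequence of abstract groups first, and then separately upgrade each of the three maps to a strict homomorphism using the topological facts already collected in the excerpt.

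For the underlying algebra, the key input is the de Rham description of the groups: by \Cref{exactforms} (and its SAC analogue in the remark above) a closed form is exact iff all its periods vanish, and $H^*(\RR)$ is computed by periods over smooth cycles. The map $\deR: H^{od}(\RR)\to \Omega^{od}/\Omega_U$ sends a real cohomology class to any closed form representing it, modulo $\Omega_U$; this is well-defined since two representatives differ by an exact form, which lies in $\Omega_U$. Its image is exactly the closed forms modulo $\Omega_U$, i.e.\ the kernel of $d:\Omega^{od}/\Omega_U\to\Omega_{BU}$ — here one checks that $d$ lands in $\Omega_{BU}$ because $d\eta$ has integer (indeed zero) periods, and that $d[\eta]=0$ means $d\eta=0$, i.e.\ $\eta$ is closed. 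Injectivity of $\deR$ is the statement that a closed odd form lying in $\Omega_U$ is exact in cohomology, which is immediate since $\Omega_U$ forms represent the rational(ly trivial) transgressed Chern character classes — more simply, $\Omega_U$ consists of forms whose de Rham class is rational with controlled periods, but one must be slightly careful: actually $\deR$ need not be injective in general, so I expect the correct statement of exactness at $H^{od}(\RR)$ is not injectivity but exactness relative to an incoming map; rereading, the sequence as stated is a four-term exact sequence, so exactness at the two ends means only exactness at $\Omega^{od}/\Omega_U$ and at $\Omega_{BU}$. So the algebraic content is: $\ker(d) = \im(\deR)$ in $\Omega^{od}/\Omega_U$, and $\ker(\deR) = \im(d) = \{\text{exact even forms mod }\Omega_U\text{-relations}\}$ inside $\Omega_{BU}$, where the latter uses that a closed even form maps to $0$ in $H^{ev}(\RR)$ iff it is exact iff (by the SAC version of \Cref{exactforms}) it has vanishing periods, hence lies in $d\Omega^{od}$, hence in $d(\Omega^{od}/\Omega_U)$.

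Once the algebraic exactness is in hand, strictness is checked map by map using \Cref{lem:strictprop}. The middle map $d:\Omega^{od}/\Omega_U\to\Omega_{BU}$ factors through the quotient $\Omega^{od}/\Omega_U \twoheadrightarrow \Omega^{od}/\Omega^{od}_{cl}$ followed by the bounded operator $d$; since $d:\Omega^{od}\to\Omega^{ev}$ is a bounded (hence open onto image, by the open mapping theorem for Fr\'echet spaces, as $\Omega^{od}_{cl}$ is closed by \Cref{closedsubspace}) map between Fr\'echet spaces, the induced map $\Omega^{od}/\Omega^{od}_{cl}\to d\Omega^{od}$ is an isomorphism of Fr\'echet spaces; composing with the strict quotient $\Omega^{od}/\Omega_U\to\Omega^{od}/\Omega^{od}_{cl}$ (strict by \Cref{lem:strictprop}(i)--(ii)) shows $d$ is strict. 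For $\deR:H^{od}(\RR)\to\Omega^{od}/\Omega_U$, note the source is finite-dimensional; a continuous homomorphism from a finite-dimensional vector space is automatically strict onto its image provided the image is Hausdorff and closed — and the image is $\Omega^{od}_{cl}/\Omega_U$, which is closed in $\Omega^{od}/\Omega_U$ since $\Omega^{od}_{cl}$ is closed in $\Omega^{od}$; then a continuous linear bijection from a finite-dimensional space onto a finite-dimensional Hausdorff space is a homeomorphism. For the last map $\deR:\Omega_{BU}\to H^{ev}(\RR)$, it is the restriction of the strict (open, by the open mapping theorem) quotient $\Omega^{ev}_{cl}\to H^{ev}_{dR}$ to the subgroup $\Omega_{BU}$; one checks its image is all of $H^{ev}(\RR)$ (every real even class is represented by some form, and by scaling one can arrange integrality of periods — this uses finite generation of homology and that $\Omega_{BU}$ spans $H^{ev}_{dR}$ over $\RR$), and strictness follows because $\Omega_{BU}$ contains the full closed subspace $d\Omega^{od}$ and $\Omega_{BU}/d\Omega^{od}$ maps isomorphically onto a lattice-containing subspace; more directly, since $H^{ev}(\RR)$ is finite-dimensional, any surjective continuous homomorphism onto it from a topological group is automatically open, hence strict.

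The main obstacle I anticipate is the strictness (rather than mere continuity) at the two ends, specifically verifying that $\deR:\Omega_{BU}\to H^{ev}(\RR)$ is \emph{onto} and open: one must know $\Omega_{BU}$ surjects onto $H^{ev}(\RR)$, which requires the nontrivial input (cited from \cite{SS10,SS18}) that $\Omega_{BU}$ consists of Chern-character forms, together with the fact that Chern characters of bundles rationally span even cohomology; and one must handle the topology carefully, since $\Omega_{BU}$ is merely a closed subgroup (not a subspace) of a Fr\'echet space. The cleanest route is to observe that $\Omega_{BU}$ contains $d\Omega^{od}$ as an open-image... no — rather, that $\Omega_{BU} = \deR^{-1}(L)$ for a lattice $L\subseteq H^{ev}(\RR)$ intersected with... in any case, the decomposition $\Omega_{BU}\cong d\Omega^{od}\times(\text{finite-rank lattice})$ up to topological isomorphism makes all the maps transparent, and I would set this up first. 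Everything else is a routine application of the open mapping theorem for Fr\'echet spaces and \Cref{lem:strictprop}.
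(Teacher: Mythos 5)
Your handling of the middle map agrees with the paper's (reduce to $d:\Omega^{od}\to d\Omega^{od}$ and invoke the open mapping theorem for Fr\'echet spaces), and re-deriving the algebraic exactness, which the paper simply cites from \cite{SS10}, is harmless. But both end maps have genuine problems, and both stem from not pinning down the key input the paper isolates before its proof: the kernel of $\deR:H^{od}(\RR)\to\Omega^{od}/\Omega_U$ is a full-rank lattice $L_U$, and the image of $\deR:\Omega_{BU}\to H^{ev}(\RR)$ is a full-rank lattice $L_{BU}$ (both coming from the Chern character). For $\deR:\Omega_{BU}\to H^{ev}(\RR)$ you assert the map is \emph{onto} $H^{ev}(\RR)$, arguing that one can ``scale to arrange integrality of periods''; this is false --- scaling a form changes its de Rham class, and the image is exactly the lattice $L_{BU}$ of classes with integral periods over SAC cycles, a proper discrete subgroup. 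The general principle you then invoke, that a surjective continuous homomorphism onto a finite-dimensional space is automatically open, is also false (the identity from $\RR$ with the discrete topology to $\RR$ with its usual topology is continuous, surjective, and not open). The correct observation runs in the opposite direction: because the image $L_{BU}$ is \emph{discrete}, every map onto it is open, so strictness is trivial.

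For $\deR:H^{od}(\RR)\to\Omega^{od}/\Omega_U$, your justification (``a continuous linear bijection from a finite-dimensional space onto a finite-dimensional Hausdorff space is a homeomorphism'') does not apply: the map is not injective, its kernel is $L_U$, and the induced bijection supplied by \Cref{lem:strictprop}(iii) is $H^{od}(\RR)/L_U\to\im\deR$, whose source is a compact torus rather than a vector space and whose target carries a subspace topology you have not identified. The argument that works (and is the paper's) is: $H^{od}(\RR)/L_U$ is compact, $\im\deR$ is Hausdorff as a subspace of the Hausdorff group $\Omega^{od}/\Omega_U$, and a continuous bijection from a compact space to a Hausdorff space is a homeomorphism. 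Finally, the ``cleanest route'' you propose --- first establishing $\Omega_{BU}\cong d\Omega^{od}\times L_{BU}$ topologically --- is \Cref{cor:bottomright} of the paper, which is \emph{deduced from} this proposition; setting it up first without an independent argument would be circular.
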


We will see shortly (at the beginning of the next subsection) that the kernel of $H^{od}(\RR)\xrightarrow{\deR} \Omega^{od}/\Omega_U$ is a lattice $L_U$ of full rank (i.e. the rank of $L_U$ is the dimension of $H^{od}(\RR)$). Similarly the image of $\Omega_{BU}\xrightarrow{\deR} H^{ev}(\RR)$ is a lattice $L_{BU}$ of full rank. Granted these, we can proceed to prove our proposition.
\begin{proof}
We know already from \cite{SS10} this sequence is exact, it remains to show each map is continuous and strict.

(1) $H^{od}(\RR)\xrightarrow{\deR} \Omega^{od}/\Omega_U$ is continuous since it is the composition of the continuous maps $H^{od}(\RR)=\Omega_{cl}^{od}/d\Omega^{ev}\hookrightarrow \Omega^{od}/d\Omega^{ev}$ and $\Omega^{od}/d\Omega^{ev}\twoheadrightarrow \Omega^{od}/\Omega_U$. Therefore, it induces an continuous bijection $H^{od}(\RR)/L_U\to \im(\deR)$. Notice that $H^{od}(\RR)/L_U$ is a finite dimensional torus which is in particular compact, and $\im(\deR)$ as a subspace of the Hausdorff space $\Omega^{od}/\Omega_U$ is Hausdorff. Recall that a continuous bijection from a compact space to a Hausdorff space must be a homeomorphism, we conclude $H^{od}(\RR)/L_U\to \im\deR$ is an isomorphism of topological groups. So by \Cref{lem:strictprop} $H^{od}(\RR)\xrightarrow{\deR} \Omega^{od}/\Omega_U$ is strict.

(2) $\Omega_{BU}\xrightarrow{\deR} H^{ev}(\RR)$ is continuous since it is the composition of the continuous maps $\Omega_{BU}\hookrightarrow \Omega^{ev}_{cl}$ and $\Omega^{ev}_{cl}\twoheadrightarrow \Omega^{ev}_{cl}/d\Omega^{od}=H^{ev}(\RR)$. Meanwhile its image $L_{BU}$ is discrete, therefore $\Omega_{BU}\xrightarrow{\deR} H^{ev}(\RR)$ is trivially strict.

(3) Finally we show $\Omega^{od}/\Omega_U\xrightarrow{d} \Omega_{BU}$ is continuous and strict. By \Cref{lem:strictprop} and definition of quotient topology, it suffices to show $d:\Omega^{od}\to \Omega_{BU}$ is continuous and strict, which follows from the continuity of $d$ and the openness of $d:\Omega^{od}\to d\Omega^{od}$ guaranteed by the open mapping theorem for Fr\'echet spaces.
\end{proof}

\subsection{Complex K-group} The (even) complex K-group $K(M)=K^0(M)$ is a finitely generated abelian group, we equip it with the discrete topology so that all the maps out of $K(M)$ are automatically continuous. Meanwhile a map into $K(M)$ is continuous if and only if for all $x\in K(M)$ the preimage of $x$ is clopen.

Recall that the Chern character map $K\to H^{ev}(\QQ)$ is an isomorphism when tensored with $\QQ$. Hence the rational K-theory $K_{\QQ}$ is isomorphic to $H^{ev}(\QQ)$. Also the image of the Chern character map is a discrete integral lattice $L_{BU}$ in $H^{ev}(\QQ)$ whose rank is the same as the dimension of $H^{ev}(\QQ)$. From the character diagram, this lattice $L_{BU}$ is also the image of $\Omega_{BU}\xrightarrow{\deR} H^{ev}(\RR)$. Similar analysis applies word-by-word to the Chern character map for the odd $K$-group $K^{-1}\to H^{od}(\QQ)$. We denote the corresponding lattice in $H^{od}(\QQ)\subset H^{od}(\RR)$ by $L_U$.

Now consider the coefficient long exact sequences for K-theory induced by $0\to \ZZ\to \QQ\to\QQ/\ZZ\to 0$ and $0\to \ZZ\to \RR\to\RR/\ZZ\to 0$, and identify the rational (resp. real) K-groups with the rational (resp. real) cohomology groups by means of Chern character, we then obtain the following commutative diagram with exact rows:
\begin{center}
	\begin{tikzcd}\label{coefficientseq}
		K^{-1}\ar[r]\ar[equal]{d} & H^{od}(\QQ)\ar[r]\ar[d,hook] & K^{-1}_{\QQ/\ZZ} \ar[r]\ar[d] & K\ar[r]\ar[equal]{d} & H^{ev}(\QQ)\ar[d,hook]\\
		K^{-1}\ar[r] & H^{od}(\RR)\ar[r,"\text{mod }\ZZ"] & K^{-1}_{\RR/\ZZ} \ar[r,"\beta"] & K\ar[r,"c"] & H^{ev}(\RR)
	\end{tikzcd}
\end{center}
From this commutative diagram, we can deduce several useful facts.
\begin{lemma}\label{lem:coefficientseq}
\begin{enumerate}[(1)]
	\item $K^{-1}_{\QQ/\ZZ}\to K^{-1}_{\RR/\ZZ}$ is an injection, and it identifies $K^{-1}_{\QQ/\ZZ}$ with $\Tor(K^{-1}_{\RR/\ZZ})$.
	\item The image of the Bockstein $\beta: K^{-1}_{\RR/\ZZ}\to K$ is $\Tor(K)$.
	\item $\text{mod }\ZZ: H^{od}(\RR)\to K^{-1}_{\RR/\ZZ}$ factors as $H^{od}(\RR)\twoheadrightarrow H^{od}(\RR)/L_U\hookrightarrow K^{-1}_{\RR/\ZZ}$, where the first map is a universal covering map.
\end{enumerate}
\end{lemma}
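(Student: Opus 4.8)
The plan is to read everything off the commutative diagram with exact rows displayed just above the statement, i.e.\ to run a diagram chase using only the exactness of the two Bockstein sequences together with the facts recorded just before: the Chern character identifies $K^{-1}\otimes\QQ\cong H^{od}(\QQ)$ and $K\otimes\QQ\cong H^{ev}(\QQ)$, with images the full-rank integral lattices $L_U\subset H^{od}(\QQ)\subset H^{od}(\RR)$ and $L_{BU}\subset H^{ev}(\QQ)\subset H^{ev}(\RR)$, and the real Chern character $c\colon K\to H^{ev}(\RR)$ is injective modulo torsion. First I would extract from the bottom row the two exact sequences
\[
0\to L_U\to H^{od}(\RR)\xrightarrow{\text{mod }\ZZ} K^{-1}_{\RR/\ZZ},\qquad 0\to H^{od}(\RR)/L_U\to K^{-1}_{\RR/\ZZ}\xrightarrow{\beta} \Tor(K)\to 0,
\]
the first because $\ker(\text{mod }\ZZ)=\im(K^{-1}\to H^{od}(\RR))=L_U$, the second because $\ker c=\Tor(K)$ and $\im\beta=\ker c$; and similarly from the top row the short exact sequence $0\to H^{od}(\QQ)/L_U\to K^{-1}_{\QQ/\ZZ}\to\Tor(K)\to 0$.

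Part (2) is then immediate: $\im\beta=\ker c=\Tor(K)$. For part (1), to see $K^{-1}_{\QQ/\ZZ}\to K^{-1}_{\RR/\ZZ}$ is injective I would chase: if $z\in K^{-1}_{\QQ/\ZZ}$ dies in $K^{-1}_{\RR/\ZZ}$, then its image in $K$ vanishes (the vertical map on $K$ is the identity), so $z$ comes from some $w\in H^{od}(\QQ)$; the image of $w$ in $H^{od}(\RR)$ then lies in $L_U$, and since $L_U\subseteq H^{od}(\QQ)$ this forces $w\in L_U$, whence $z=0$. To identify the image with $\Tor(K^{-1}_{\RR/\ZZ})$, one inclusion is clear because $K^{-1}_{\QQ/\ZZ}$ is an extension of the finite group $\Tor(K)$ by the torsion group $H^{od}(\QQ)/L_U$, hence is itself torsion. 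For the reverse inclusion, given torsion $y\in K^{-1}_{\RR/\ZZ}$ I would use surjectivity of $K^{-1}_{\QQ/\ZZ}\to\Tor(K)$ to subtract a class $z\in K^{-1}_{\QQ/\ZZ}$ with the same image in $K$, reducing to the assertion that the torsion subgroup of the torus $H^{od}(\RR)/L_U$ equals $H^{od}(\QQ)/L_U$ — this is where the rationality and full rank of $L_U$ get used.

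For part (3), the factorization $H^{od}(\RR)\twoheadrightarrow H^{od}(\RR)/L_U\hookrightarrow K^{-1}_{\RR/\ZZ}$ is exactly the first sequence above, so the only remaining point is the topological claim that $H^{od}(\RR)\to H^{od}(\RR)/L_U$ is a universal covering map. This holds because $H^{od}(\RR)$ carries its (finite-dimensional, hence standard) vector space topology and $L_U$ is a discrete subgroup of full rank, so the quotient is a torus and the quotient homomorphism is a covering map; since $H^{od}(\RR)$ is simply connected it is the universal cover. The main obstacle I anticipate is not any single step but keeping the chase honest about which maps coincide under the vertical identifications, and in particular establishing $\Tor\big(H^{od}(\RR)/L_U\big)=H^{od}(\QQ)/L_U$ cleanly; everything else is formal once the two exact sequences above and the lattice facts from the previous subsection are in hand.
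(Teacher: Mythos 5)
Your proposal is correct and follows essentially the same route as the paper: everything is read off the commutative diagram comparing the $\QQ/\ZZ$- and $\RR/\ZZ$-Bockstein sequences, using that the Chern character is a rational isomorphism with image the full-rank lattices $L_U$, $L_{BU}$. The only differences are cosmetic (you chase the four-lemma explicitly where the paper cites the five-lemma, and you show $K^{-1}_{\QQ/\ZZ}$ is torsion via the extension by $H^{od}(\QQ)/L_U$ where the paper tensors the top row with $\QQ$); in fact your treatment of the inclusion $\Tor(K^{-1}_{\RR/\ZZ})\subseteq K^{-1}_{\QQ/\ZZ}$, via subtracting a lift of $\beta(y)$ and identifying the torsion of the torus $H^{od}(\RR)/L_U$ with $H^{od}(\QQ)/L_U$, spells out a step the paper leaves implicit.
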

\begin{proof}
\begin{enumerate}[(1)]
	\item The injectivity follows from the five-lemma, we hence view $K^{-1}_{\QQ/\ZZ}$ as a subgroup of $K^{-1}_{\RR/\ZZ}$. Now tensoring the first row with $\QQ$ yields an exact sequence
	$$K^{-1}\otimes\QQ\to H^{od}(\QQ)\to K^{-1}_{\QQ/\ZZ}\otimes\QQ\to K\otimes\QQ\to H^{ev}(\QQ),$$
	where the first and last maps are isomorphisms. Hence $K^{-1}_{\QQ/\ZZ}\otimes\QQ=0$, that is to say $K^{-1}_{\QQ/\ZZ}$ is a torsion group. Meanwhile, if $x\in K^{-1}_{\RR/\ZZ}$ is a torsion, then $\beta(x)\in K$ is also a torsion, hence it is mapped to zero by $K\to H^{ev}(\QQ)$. By the exactness of the first row, we see that $x$ in fact belongs to $K^{-1}_{\QQ/\ZZ}$. This proves $K^{-1}_{\QQ/\ZZ}=\Tor(K^{-1}_{\RR/\ZZ})$.
	\item Since $\im \beta=\ker c$ and $H^{ev}(\RR)$ is torsion-free, we have $\Tor(K)\subseteq \im \beta$. On the other hand, $\im \beta$ coincides with the image of $K^{-1}_{\QQ/\ZZ}\to K$ by an easy diagram tracing. But we know from (1) that $K^{-1}_{\QQ/\ZZ}$ is a torsion group, so $\im\beta\subseteq \Tor(K)$.
	\item The factorization follows from the exactness of the bottom row. Also the map $H^{od}(\RR)\to H^{od}(\RR)/L_U$	, being the quotient map of a finite dimensional real vector space by a lattice of maximal rank, is clearly a universal covering map. More precisely, it is the universal covering map of a finite dimensional torus.
\end{enumerate}
\end{proof}

\begin{remark}
	Using the coefficient exact sequence associated to $0\to \ZZ\xrightarrow{\times n}\ZZ\to \ZZ/n\to 0$, the same argument shows that the image of $K^{-1}_{\ZZ/n}\to K^{-1}_{\RR/\ZZ}$ (induced by $\ZZ/n\hookrightarrow \RR/\ZZ$) is the subgroup consisting of $n$-torsions of $K^{-1}_{\RR/\ZZ}$.
	\begin{center}
	\begin{tikzcd}
 		K^{-1}_{\ZZ/n} \ar[r]\ar[d] & K\ar[r,"\times n"]\ar[equal]{d} & K\ar[d]\\
		K^{-1}_{\RR/\ZZ} \ar[r,"\beta"] & K\ar[r,"c"] & K_\RR
	\end{tikzcd}
\end{center}
\end{remark}

\begin{corollary}\label{cor:bottomright}
	$d\Omega^{od}$ is the identity component of $\Omega_{BU}$, and $\pi_0 (\Omega_{BU})\cong L_{BU}$.
\end{corollary}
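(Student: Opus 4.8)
The plan is to extract everything from the strictly exact sequence of \Cref{prop:bottomexact}. First I would observe that $d$ annihilates $\Omega_U$ (unitary forms are closed), so the image of $d\colon\Omega^{od}/\Omega_U\to\Omega_{BU}$ coincides with $d\Omega^{od}$; strict exactness then gives $d\Omega^{od}=\ker\bigl(\deR\colon\Omega_{BU}\to H^{ev}(\RR)\bigr)$ and that $\deR$ is a strict surjection of $\Omega_{BU}$ onto the lattice $L_{BU}$. In other words there is a strictly exact sequence of topological groups
\[
0\longrightarrow d\Omega^{od}\longrightarrow \Omega_{BU}\xrightarrow{\ \deR\ } L_{BU}\longrightarrow 0,
\]
in which $L_{BU}\subset H^{ev}(\RR)$ is discrete (it is a full-rank lattice, as recorded just before the proof of \Cref{prop:bottomexact}).

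Next I would identify $d\Omega^{od}$ with the identity component of $\Omega_{BU}$. It is connected, being the image under the continuous map $d$ of the Fr\'echet space $\Omega^{od}$, which as a topological vector space is (path-)connected. It is also open in $\Omega_{BU}$: since $L_{BU}$ is discrete, $\{0\}$ is open there, and $d\Omega^{od}$ is precisely its preimage under $\deR$. An open subgroup is automatically closed, so $d\Omega^{od}$ is clopen in $\Omega_{BU}$; hence $d\Omega^{od}\cap(\Omega_{BU})_e$ is a nonempty clopen subset of the connected space $(\Omega_{BU})_e$, forcing $(\Omega_{BU})_e\subseteq d\Omega^{od}$, while connectedness of $d\Omega^{od}$ gives the reverse containment. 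Therefore $d\Omega^{od}=(\Omega_{BU})_e$.

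Finally, $\pi_0(\Omega_{BU})=\Omega_{BU}/(\Omega_{BU})_e=\Omega_{BU}/d\Omega^{od}$, and the strictness of $\deR$ upgrades the algebraic isomorphism $\Omega_{BU}/d\Omega^{od}\xrightarrow{\ \cong\ }L_{BU}$ to an isomorphism of topological groups by \Cref{lem:strictprop}. This yields $\pi_0(\Omega_{BU})\cong L_{BU}$, as claimed. I do not anticipate any genuine obstacle here: the proof rests only on \Cref{prop:bottomexact}, the discreteness of $L_{BU}$, and the elementary fact that an open connected subgroup of a topological group is its identity component.
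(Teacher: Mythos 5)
Your proof is correct and follows essentially the same route as the paper: use the strict exactness of \Cref{prop:bottomexact} to get $d\Omega^{od}=\ker(\deR)$ with discrete quotient $L_{BU}$, deduce that $d\Omega^{od}$ is clopen, observe it is connected, and conclude. The only cosmetic difference is that you obtain connectedness of $d\Omega^{od}$ as a continuous image of the Fr\'echet space $\Omega^{od}$, whereas the paper invokes \Cref{closedsubspace} to see $d\Omega^{od}$ is itself a Fr\'echet space; both are fine.
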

\begin{proof}
	By \Cref{prop:bottomexact}, $\Omega_{BU}/d\Omega^{od}$ is homeomorphic to the image of $\Omega_{BU}\xrightarrow{\deR}H^{od}(\RR)$, which we have seen is $L_{BU}$. Since $L_{BU}$ is discrete, we conclude that $d\Omega^{od}$ is clopen in $\Omega_{BU}$. Now $d\Omega^{od}$, being a Fr\'echet space, is (arcwise) connected, so $d\Omega^{od}$ must be the identity component of $\Omega_{BU}$. It follows that $\pi_0 (\Omega_{BU})=\Omega_{BU}/d\Omega^{od}\cong L_{BU}$.
\end{proof}

\subsection{Odd K-group with $\RR/\ZZ$-coefficients} From \Cref{lem:coefficientseq} we have the following induced commutative diagram in which each row is exact.
\begin{center}
	\begin{tikzcd}
		0\ar[r] & H^{od}(\QQ)/L_U\ar[r]\ar[d,hook] & K^{-1}_{\QQ/\ZZ}\ar[r]\ar[d,hook] & \Tor(K)\ar[equal]{d}\ar[r] & 0\\
		0\ar[r] & H^{od}(\RR)/L_U\ar[r,"\text{mod }\ZZ"] & K^{-1}_{\RR/\ZZ}\ar[r,"\beta"] & \Tor(K)\ar[r] & 0
	\end{tikzcd}
\end{center}

\begin{proposition}\label{prop:topexact}
There is a unique topology on $K^{-1}_{\RR/\ZZ}$ making it into an abelian topological group such that
\[
0\to H^{od}(\RR)/L_U\xrightarrow{\text{mod }\ZZ} K^{-1}_{\RR/\ZZ}\xrightarrow{\beta}\Tor(K)\to 0
\]
is strictly exact. Moreover, under such topology $K^{-1}_{\QQ/\ZZ}$ is dense in $K^{-1}_{\RR/\ZZ}$.
\end{proposition}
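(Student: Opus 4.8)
The plan is to put on $K^{-1}_{\RR/\ZZ}$ the unique group topology that makes it the middle term of an extension of the discrete group $\Tor(K)$ by the topological group $H^{od}(\RR)/L_U$, i.e.\ declare $\text{mod }\ZZ: H^{od}(\RR)/L_U\to K^{-1}_{\RR/\ZZ}$ to be an open embedding and take the images of open sets in $H^{od}(\RR)/L_U$, translated around by coset representatives, as a basis. Concretely: a set $U\subseteq K^{-1}_{\RR/\ZZ}$ is open iff for every $x\in U$ the preimage $(\text{mod }\ZZ)^{-1}(U-x)$ is open in $H^{od}(\RR)/L_U$. One checks this is a group topology (continuity of addition and inversion reduces to the corresponding statements in $H^{od}(\RR)/L_U$ because the subgroup is open), that $H^{od}(\RR)/L_U$ embeds as an open --- hence clopen --- subgroup, so $\beta$ becomes continuous with discrete (hence strictly exact) image $\Tor(K)$; exactness of the sequence of groups is already given by the diagram preceding the proposition. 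This is precisely the situation of \Cref{lem:secondiso}/\Cref{lem:twosubgrps} with an open subgroup, so strictness of both maps is automatic.

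For uniqueness: in any group topology making the sequence strictly exact, the map $\text{mod }\ZZ$ is strict with image $\ker\beta$, so by \Cref{lem:strictprop}(iii) it is an isomorphism of topological groups onto $\ker\beta$ with its subspace topology; and strictness of $\beta$ together with discreteness of $\Tor(K)$ forces $\ker\beta$ to be open in $K^{-1}_{\RR/\ZZ}$. Thus $K^{-1}_{\RR/\ZZ}$ has an open subgroup carrying a prescribed topology, and any two group topologies agreeing on an open subgroup coincide (a neighborhood basis of $0$ in the ambient group is given by a neighborhood basis of $0$ in the open subgroup). Hence the topology is unique.

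For density of $K^{-1}_{\QQ/\ZZ}$: I would use the commutative diagram with exact rows displayed just before the proposition. Given $x\in K^{-1}_{\RR/\ZZ}$ and a basic open neighborhood $x+V$ with $V$ open in $H^{od}(\RR)/L_U$, first adjust by an element of $K^{-1}_{\QQ/\ZZ}$ so that $\beta(x)\in\Tor(K)$ is hit: since the right vertical map $\Tor(K)\to\Tor(K)$ is the identity, there is $y\in K^{-1}_{\QQ/\ZZ}$ with $\beta(y)=\beta(x)$, so $x-y\in\ker\beta = \im(\text{mod }\ZZ)$, i.e.\ $x-y$ comes from some $\xi\in H^{od}(\RR)/L_U$. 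Now $H^{od}(\QQ)/L_U$ is dense in $H^{od}(\RR)/L_U$ (the torus $H^{od}(\RR)/L_U$ has dense image of the rational points, as noted when topologizing $H^*(\QQ)$), so pick $\eta\in H^{od}(\QQ)/L_U$ with $\eta-\xi\in V$; then $y + (\text{mod }\ZZ)(\eta)\in K^{-1}_{\QQ/\ZZ}$ lies in $x+V$. The main obstacle is largely bookkeeping rather than depth: one must be careful that ``basic open neighborhood'' really does mean a translate of an open subset of the open subgroup (which is where openness of $H^{od}(\RR)/L_U$ is essential), and that the rational points of the torus $H^{od}(\RR)/L_U$ are dense --- this follows since $L_U\subseteq H^{od}(\QQ)\subseteq H^{od}(\RR)$ and $\QQ^n$ is dense in $\RR^n$. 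With those two points in hand the argument is routine.
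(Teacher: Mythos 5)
Your proposal is correct and follows essentially the same route as the paper: both realize $\ker\beta$ as an open (clopen) subgroup homeomorphic to the torus $H^{od}(\RR)/L_U$ with discrete quotient $\Tor(K)$, deduce uniqueness from the fact that the topology is determined on translates of this open subgroup, and reduce density to the density of $H^{od}(\QQ)/L_U$ in $H^{od}(\RR)/L_U$ via the commutative diagram. The only cosmetic difference is that the paper phrases uniqueness by identifying the fibers of $\beta$ with the connected components of $K^{-1}_{\RR/\ZZ}$, whereas you invoke the standard fact that a group topology is determined by the topology of an open subgroup.
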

\begin{proof}
Indeed, in order so, $\text{mod }\ZZ$ must induce a homeomorphism between $H^{od}(\RR)/L_U$ and $\beta^{-1}(0)$. In particular $\beta^{-1}(0)$ must be connected. Also $\beta$ must be a quotient map, so for each $x\in \Tor(K)$, $\beta^{-1}(x)$ should be clopen in $K^{-1}_{\RR/\ZZ}$. Moreover since the topology is required to be compatible with the group structure, $\beta^{-1}(x)$ should be homeomorphic to $\beta^{-1}(0)$ by translation. Therefore $K^{-1}_{\RR/\ZZ}=\coprod\limits_{x\in K} \beta^{-1}(x)$ must be the partition of $K^{-1}_{\RR/\ZZ}$ into its connected components, and each component is by translation homeomorphic to the identity component $\beta^{-1}(0)$. This in turn completely determines the topology on $K^{-1}_{\RR/\ZZ}$ and this topology clearly makes the above sequence strictly exact.

In order to check the density of $K^{-1}_{\QQ/\ZZ}$ in $K^{-1}_{\RR/\ZZ}$, it suffices to check that the intersection of $K^{-1}_{\QQ/\ZZ}$ with each connected component of $K^{-1}_{\RR/\ZZ}$ is dense in that component. This, by the above diagram and by translation homeomorphism, is reduced to verifying the density of $H^{od}(\QQ)/L_U$ in $H^{od}(\RR)/L_U$ which is now obvious.
\end{proof}

Therefore we equip $K^{-1}_{\RR/\ZZ}$ with the topology discussed in the above proof.

\subsection{Differential K-group}\label{subsec:diffK} By the same argument used in the proof of \Cref{prop:topexact}, we have
\begin{proposition}\label{prop:toponKhat}
There is a unique topology on the differential K-group $\hat{K}(M)$ compatible with its group structure such that the sequence
\[
0\to \Omega^{od}/\Omega_U \xrightarrow{i} \hat{K}\xrightarrow{\delta} K\to 0
\]
is strictly exact. Moreover, with such topology $\Omega^{od}/\Omega_U$ is the identity component of $\hat{K}$. \qed
\end{proposition}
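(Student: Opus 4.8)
The plan is to imitate the proof of \Cref{prop:topexact} verbatim, now applied to the short exact sequence $0\to \Omega^{od}/\Omega_U\xrightarrow{i}\hat K\xrightarrow{\delta}K\to 0$. First I would observe that $\Omega^{od}/\Omega_U$ already carries a topology (the subquotient Fr\'echet topology) and $K$ is discrete, so the only freedom is in how the fibers of $\delta$ glue together. As in \Cref{prop:topexact}, any topology on $\hat K$ making the sequence strictly exact forces $i$ to be a homeomorphism onto $\delta^{-1}(0)$; since $\Omega^{od}/\Omega_U$ is connected (it is a quotient of the Fr\'echet space $\Omega^{od}$, hence arcwise connected), $\delta^{-1}(0)$ must be connected. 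Strictness of $\delta$ plus discreteness of $K$ forces $\delta$ to be a quotient map onto a discrete group, so each fiber $\delta^{-1}(x)$ is clopen; compatibility with the group structure forces $\delta^{-1}(x)$ to be homeomorphic to $\delta^{-1}(0)$ by translation by any lift of $x$. Hence $\hat K=\coprod_{x\in K}\delta^{-1}(x)$ is necessarily the decomposition into connected components, each copy of $\Omega^{od}/\Omega_U$, and conversely this prescription (declare a set open iff its intersection with every fiber is open, under the homeomorphism with $\Omega^{od}/\Omega_U$) does define a group topology making the sequence strictly exact. This simultaneously establishes uniqueness, existence, and the identification of $\Omega^{od}/\Omega_U=\delta^{-1}(0)$ as the identity component.

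The one point requiring slightly more care than in \Cref{prop:topexact} is verifying that the prescribed topology is genuinely a \emph{group} topology — i.e. that addition $\hat K\times\hat K\to\hat K$ and negation are continuous. In \Cref{prop:topexact} this was implicit; here I would spell out that on the clopen piece $\delta^{-1}(x)\times\delta^{-1}(y)$, choosing lifts $\hat x,\hat y$ of $x,y$, addition is the map $(\hat x+a,\hat y+b)\mapsto \hat x+\hat y+(a+b)$ which under the identifications is just addition in $\Omega^{od}/\Omega_U$ translated, hence continuous; and these clopen pieces cover $\hat K\times\hat K$, so addition is continuous globally. Negation is handled the same way. This is the step I expect to be the main (though still routine) obstacle: making sure the fiberwise-Fr\'echet structure is compatible across the discrete index set $K$, which works precisely because $K$ is discrete and the group extension provides canonical translation identifications of the fibers.

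Granting that, strict exactness is immediate: $i$ is a homeomorphism onto the clopen subgroup $\delta^{-1}(0)$ (in particular strict), and $\delta$ is an open map onto the discrete group $K$ (every map to a discrete group is open), hence strict; exactness as groups is already known from \cite{SS10}. Finally, since $\delta^{-1}(0)=\im i\cong \Omega^{od}/\Omega_U$ is connected and clopen in $\hat K$, it is exactly the identity component of $\hat K$, which is the last assertion.
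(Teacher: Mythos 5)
Your proposal is correct and is essentially the paper's own argument: the paper proves this proposition by the single remark ``by the same argument used in the proof of \Cref{prop:topexact},'' and your write-up is precisely that argument transplanted to the sequence $0\to \Omega^{od}/\Omega_U\to\hat K\to K\to 0$, with the routine verification of continuity of addition (and the identification of the identity component) spelled out a little more explicitly than the paper bothers to.
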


Therefore we equip $\hat{K}$ with such unique topology as in \Cref{prop:toponKhat}. Note that $\Omega^{od}$ is locally connected, hence so are $\Omega^{od}/\Omega_U$ and $\hat{K}$. 

\begin{proposition} The sequence
\[
0\to K^{-1}_{\RR/\ZZ}\xrightarrow{j}\hat{K}\xrightarrow{ch}\Omega_{BU}\to 0
\]
is strictly exact. Moreover, $j$ maps $K^{-1}_{\RR/\ZZ}$ isomorphically onto $\overline{\Tor}(\hat{K})$.
\end{proposition}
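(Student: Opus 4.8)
The plan is to take the abstract exactness of the sequence as already known from \cite{SS10}, and then to promote $j$ and $ch$ to continuous \emph{strict} homomorphisms while simultaneously identifying $\im j$ with $\overline{\Tor}(\hat{K})$. Three facts will be used repeatedly. First, $\hat{K}$ is Hausdorff, being an extension of the Hausdorff group $\Omega^{od}/\Omega_U$ by the discrete group $K$ (see \Cref{prop:toponKhat}), and it is locally connected with \emph{open} identity component $\Omega^{od}/\Omega_U$. Second, $\Omega_{BU}$ is locally connected with open (indeed clopen) identity component $d\Omega^{od}$, by \Cref{cor:bottomright}. Third---and this is the linchpin---$K^{-1}_{\RR/\ZZ}$ is \emph{compact}: by \Cref{prop:topexact} it is the disjoint union $\coprod_{x\in\Tor(K)}\beta^{-1}(x)$ of translates of the torus $H^{od}(\RR)/L_U$, and $\Tor(K)$ is finite since $K$ is finitely generated. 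I will also freely use the elementary observation that a homomorphism out of (or into) a locally connected topological group with open identity component $G_e$ is continuous (resp.\ open) as soon as its restriction to $G_e$ is, since $G$ is the disjoint union of the open cosets of $G_e$ and translations are homeomorphisms.

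First I would check continuity. For $ch$, by the observation just made it suffices to see that $ch$ is continuous on $\Omega^{od}/\Omega_U$, i.e.\ that $ch\circ i$ is; but commutativity of the character diagram gives $ch\circ i=d$, which is continuous by \Cref{prop:bottomexact}. For $j$, it suffices to see $j$ is continuous on the identity component $H^{od}(\RR)/L_U$ of $K^{-1}_{\RR/\ZZ}$; precomposing with the universal covering (hence quotient) map $H^{od}(\RR)\twoheadrightarrow H^{od}(\RR)/L_U$ turns this into the composite $j\circ(\mathrm{mod}\,\ZZ)\colon H^{od}(\RR)\to\hat{K}$, which, up to sign, equals $i\circ\deR$ by commutativity of the character diagram; and $i\circ\deR$ is continuous because $\deR\colon H^{od}(\RR)\to\Omega^{od}/\Omega_U$ is the continuous composite $\Omega^{od}_{cl}/d\Omega^{ev}\hookrightarrow\Omega^{od}/d\Omega^{ev}\twoheadrightarrow\Omega^{od}/\Omega_U$ and $i$ is continuous.

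Next I would pin down $\im j$ and deduce that $j$ is strict. Since $\Omega_{BU}\subseteq\Omega^{ev}_{cl}$ is torsion-free, $\Tor(\hat{K})\subseteq\ker(ch)=\im j$. On the other hand $j$ is a continuous injection from the compact space $K^{-1}_{\RR/\ZZ}$ into the Hausdorff space $\hat{K}$, hence a homeomorphism onto its image and $\im j$ is closed in $\hat{K}$; therefore $\overline{\Tor}(\hat{K})\subseteq\im j$. Conversely, $\Tor(K^{-1}_{\RR/\ZZ})=K^{-1}_{\QQ/\ZZ}$ is dense in $K^{-1}_{\RR/\ZZ}$ by \Cref{prop:topexact} and group homomorphisms preserve torsion, so continuity of $j$ forces $\im j\subseteq\overline{\Tor}(\hat{K})$. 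Hence $j$ restricts to an isomorphism of topological groups $K^{-1}_{\RR/\ZZ}\xrightarrow{\ \cong\ }\overline{\Tor}(\hat{K})$, which is the ``moreover'' assertion, and in particular $j$ is strict by \Cref{lem:strictprop}(iii).

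Finally, strictness of $ch$. By \cite{SS10} together with the previous paragraph, $ch$ is surjective with $\ker(ch)=\im j=\overline{\Tor}(\hat{K})$, so by \Cref{lem:strictprop}(iii) it is enough to show that the induced continuous bijection $\hat{K}/\overline{\Tor}(\hat{K})\to\Omega_{BU}$ is open, equivalently that $ch$ itself is open. Restricted to the identity component $\Omega^{od}/\Omega_U$, the map $ch$ becomes $ch\circ i=d\colon\Omega^{od}/\Omega_U\to\Omega_{BU}$, which is strict by \Cref{prop:bottomexact} and hence open onto $d\Omega^{od}=(\Omega_{BU})_e$; since $(\Omega_{BU})_e$ is open in $\Omega_{BU}$, $ch$ sends open subsets of $\Omega^{od}/\Omega_U$ to open subsets of $\Omega_{BU}$, and intersecting an arbitrary open $U\subseteq\hat{K}$ with the open cosets of $\Omega^{od}/\Omega_U$ in $\hat{K}$ and translating back exhibits $ch(U)$ as a union of translates of such images, hence open. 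This completes the proof. I expect the main obstacle to be the identification $\im j=\overline{\Tor}(\hat{K})$: it hinges on $j$ being not merely a continuous injection but a \emph{closed topological embedding}, and that is precisely where the compactness of $K^{-1}_{\RR/\ZZ}$---ultimately the finiteness of $\Tor(K)$---is indispensable; the openness of $ch$ is a secondary, more routine point, reduced via local connectedness to the strictness of $d$ already in hand from \Cref{prop:bottomexact}.
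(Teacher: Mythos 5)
Your proof is correct and follows essentially the same route as the paper: continuity and strictness of $j$ and $ch$ are obtained by reducing to identity components (where the maps become $i\circ\deR$ and $d$ respectively), and $\im j=\overline{\Tor}(\hat{K})$ follows from torsion-freeness of $\Omega_{BU}$, closedness of $\im j$, and density of $K^{-1}_{\QQ/\ZZ}=\Tor(K^{-1}_{\RR/\ZZ})$. The only, harmless, divergence is that you deduce closedness of $\im j$ (and strictness of $j$) from compactness of $K^{-1}_{\RR/\ZZ}$, whereas the paper simply observes that $\im j=\ker ch$ is closed because $ch$ is continuous and $\Omega_{BU}$ is Hausdorff.
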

\begin{proof}
First we show that $j$ is continuous and strict. Since $j$ is a group homomorphism and connected components are open in $K^{-1}_{\RR/\ZZ}$, it suffices to prove $j$ restricted to the identity component of $K^{-1}_{\RR/\ZZ}$ is continuous and strict. From the proof of \Cref{prop:topexact} and the character diagram, such restriction coincides with the composition $H^{od}(\RR)/L_U\xrightarrow{\deR}\Omega^{od}/\Omega_U\xrightarrow{i}\hat{K}$, whose continuity and strictness follow from the continuity and strictness of $\deR$ and $i$.

Similarly, to prove $ch$ is continuous and strict, we only need to check $ch$ restricted to the identity component of $\hat{K}$ is continuous and strict. Such restriction, by \Cref{prop:toponKhat} and the character diagram, coincides with $\Omega^{od}/\Omega_U\xrightarrow{d} \Omega_{BU}$, which we have seen is continuous and strict.

Lastly we prove that $\im j=\overline{\Tor}(\hat{K})$. Since $\Omega_{BU}$, being a subgroup of the torsion-free group $\Omega^{ev}$, is torsion-free, we see $\Tor(\hat{K})\subseteq \ker ch=\im j$. Now that $ch$ is continuous and $\Omega_{BU}$ is Hausdorff, we know $\im j=\ker j$ is closed, hence $\overline{\Tor}(\hat{K})\subseteq \im j$. Meanwhile we have $j(\Tor(K^{-1}_{\RR/\ZZ}))\subseteq \Tor(\hat{K})$, so by density of  $\Tor(K^{-1}_{\RR/\ZZ})=K^{-1}_{\QQ/\ZZ}$ in $K^{-1}_{\RR/\ZZ}$ we get $\im j\subseteq \overline{\Tor}(\hat{K})$.
\end{proof}

\subsection{Character diagram as hexagon diagram}
In summary, we have the following theorem, from which it should be clear that the character diagram coincides with the hexagon diagram for $\hat{K}$.
\begin{theorem}\label{webdiagram} There is a unique topology on $\hat{K}$, functorial with respect to smooth mappings, so that all the exact sequences in the character diagram are strictly exact. With such topology, we have the following commutative diagram with strictly exact rows and columns.
\begin{center}
\begin{tikzcd}
            & 0     \arrow[d]           & 0 \arrow[d]             & 0 \arrow[d]  &   \\
0 \arrow[r] & H^{od}(\RR)/L_U \arrow[r,"\text{mod }\ZZ"] \arrow[d,"\deR"] & K^{-1}_{\RR/\ZZ} \arrow[r,"\beta"] \arrow[d,"j"] & \Tor{K} \arrow[r] \arrow[d] & 0 \\
0 \arrow[r] & \Omega^{od}/\Omega_U \arrow[r,"i"] \arrow[d,"d"] & \hat{K} \arrow[r,"\delta"] \arrow[d,"ch"] & K \arrow[r] \arrow[d,"c"] & 0 \\
0 \arrow[r] & d\Omega^{od} \arrow[r] \arrow[d] & \Omega_{BU}\arrow[r,"\deR"] \arrow[d] & L_{BU} \arrow[r] \arrow[d] & 0 \\
            & 0                         & 0                       & 0           &  
\end{tikzcd}
\end{center}
Moreover each row is isomorphic to the strictly exact sequence associated to the identity component, and each column is isomorphic to the strictly exact sequence associated to closure of torsion.\qed
\end{theorem}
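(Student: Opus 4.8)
The plan is to treat this statement as the summary it is: essentially every ingredient has already been established in \Cref{prop:bottomexact}, \Cref{lem:coefficientseq}, \Cref{cor:bottomright}, \Cref{prop:topexact}, \Cref{prop:toponKhat}, and the final proposition of \Cref{subsec:diffK}, so the proof will (i) pin down the topology on $\hat K$ and check functoriality, (ii) assemble the $4\times4$ diagram by slicing the strictly exact sequences already at hand, and (iii) carry out the two ``moreover'' identifications. I do not expect a single deep step; the care is concentrated in the functoriality claim, where the topology on $\hat K$ is only defined implicitly, and in the repeated bookkeeping of strictness when passing to sub- and quotient-pieces.

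\emph{Uniqueness and functoriality.} Any topology on $\hat K$ for which every sequence in the character diagram is strictly exact makes in particular the diagonal $0\to\Omega^{od}/\Omega_U\xrightarrow{i}\hat K\xrightarrow{\delta}K\to0$ strictly exact, hence must be the one singled out by \Cref{prop:toponKhat}; conversely, with that topology all the character-diagram sequences are strictly exact, since the two diagonals are \Cref{prop:toponKhat} and the final proposition of \Cref{subsec:diffK}, the lower boundary is \Cref{prop:bottomexact}, and the upper boundary $H^{od}(\RR)\xrightarrow{\text{mod }\ZZ}K^{-1}_{\RR/\ZZ}\xrightarrow{\beta}K\xrightarrow{c}H^{ev}(\RR)$ is \Cref{prop:topexact} spliced with \Cref{lem:coefficientseq} (which identifies $\ker\beta$ with the image of $\text{mod }\ZZ$ and $\ker c$ with $\im\beta=\Tor K$, both maps then being strict because $K$ is discrete and $H^{od}(\RR)\to H^{od}(\RR)/L_U$ is a quotient). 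For functoriality under a smooth $f\colon N\to M$: since $\hat K$ is locally connected, its identity component $\Omega^{od}/\Omega_U$ is open, and naturality of $i$ forces $f^*\colon\hat K(M)\to\hat K(N)$ to restrict on it to pullback of forms, which is continuous; a group homomorphism that is continuous on an open subgroup is continuous, so $f^*$ is continuous. The same argument topologizes $K^{-1}_{\RR/\ZZ}$ functorially, smoothness entering only through the de Rham identifications.

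\emph{The diagram.} Its three rows are, top to bottom, \Cref{prop:topexact}, \Cref{prop:toponKhat}, and $0\to d\Omega^{od}\to\Omega_{BU}\xrightarrow{\deR}L_{BU}\to0$, whose strict exactness is \Cref{cor:bottomright} combined with \Cref{prop:bottomexact}. The middle column is the final proposition of \Cref{subsec:diffK}; the right column $0\to\Tor K\to K\xrightarrow{c}L_{BU}\to0$ is a sequence of discrete groups, automatically strictly exact once one notes $\ker c=\Tor K$ (the Chern character being a rational isomorphism onto $L_{BU}$ with $H^{ev}(\RR)$ torsion-free); and the left column $0\to H^{od}(\RR)/L_U\xrightarrow{\deR}\Omega^{od}/\Omega_U\xrightarrow{d}d\Omega^{od}\to0$ is extracted from \Cref{prop:bottomexact}, where $\deR$ is strict with kernel $L_U$ (so descends to a strict injection) and $d$ is strict with image $d\Omega^{od}$, exactness at $\Omega^{od}/\Omega_U$ being exactness there in \Cref{prop:bottomexact}. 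Commutativity of the nine inner squares is commutativity of the character diagram of \cite{SS10} together with the factorizations already recorded: $\text{mod }\ZZ$ factors through $H^{od}(\RR)/L_U$ by \Cref{lem:coefficientseq}(3), $\delta\,j=\beta$, $c\,\delta=\deR\,ch$, $ch\,i=d$, and the restriction of $j$ to the identity component equals $i\,\deR$ (from the proofs of \Cref{prop:toponKhat} and the final proposition of \Cref{subsec:diffK}).

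\emph{Rows as identity-component sequences, columns as closure-of-torsion sequences.} For the rows: $\Omega^{od}/\Omega_U=\hat K_e$ by \Cref{prop:toponKhat}, $d\Omega^{od}=(\Omega_{BU})_e$ by \Cref{cor:bottomright}, and $(K^{-1}_{\RR/\ZZ})_e=\beta^{-1}(0)\cong H^{od}(\RR)/L_U$ from the proof of \Cref{prop:topexact}; the identifications $\pi_0\hat K\cong K$, $\pi_0\Omega_{BU}\cong L_{BU}$, $\pi_0 K^{-1}_{\RR/\ZZ}\cong\Tor K$ then follow because the second map of each row is a strict surjection with kernel the identity component. For the columns: $j$ identifies $K^{-1}_{\RR/\ZZ}$ with $\overline{\Tor}(\hat K)$ (final proposition of \Cref{subsec:diffK}); $\Tor K=\overline{\Tor}(K)$ since $K$ is discrete; and $H^{od}(\RR)/L_U=\overline{\Tor}(\Omega^{od}/\Omega_U)$ because $d\Omega^{od}$, being a subgroup of the $\RR$-vector space $\Omega^{ev}$, is torsion-free, so every torsion element of $\Omega^{od}/\Omega_U$ lies in the torus $H^{od}(\RR)/L_U$, whose torsion is dense; the cokernel identifications $\hat K/\overline{\Tor}\cong\Omega_{BU}$, $K/\Tor\cong L_{BU}$, $(\Omega^{od}/\Omega_U)/\overline{\Tor}\cong d\Omega^{od}$ follow from strictness of the relevant surjections. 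The main obstacle, such as it is, is to be disciplined about strictness in these slicing arguments and about the implicit nature of the topology on $\hat K$ when proving functoriality.
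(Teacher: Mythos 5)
Your proposal is correct and matches the paper's (entirely implicit) argument: the theorem is stated as a summary with no written proof, and the intended justification is exactly the assembly you carry out from \Cref{prop:bottomexact}, \Cref{lem:coefficientseq}, \Cref{cor:bottomright}, \Cref{prop:topexact}, \Cref{prop:toponKhat}, and the final proposition of \Cref{subsec:diffK}. Your added care on functoriality (continuity of $f^*$ via its restriction to the open identity component) and on $\overline{\Tor}(\Omega^{od}/\Omega_U)=H^{od}(\RR)/L_U$ fills in details the paper leaves unsaid, but does not depart from its route.
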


We end this section with two short exact sequences derived from the character diagram. They will be used in Section 3, in constructing universal classes and in proving the uniqueness. Consider the following push-out and the pull-back diagrams:
    \begin{center}
        \begin{tikzcd}
            H^{od}(\RR)/L_U \arrow[r,"\text{mod }\ZZ"] \arrow[d,"\deR"] & K^{-1}_{\RR/\ZZ}\ar[d] & \Omega_{BU}\times_{H^{ev}(\RR)} K \arrow[r] \arrow[d] & K \arrow[d,"c"]\\
            \Omega^{od}/\Omega_U \arrow[r] & \frac{\Omega^{od}/\Omega_U\times K^{-1}_{\RR/\ZZ}}{H^{od}(\RR)} & \Omega_{BU}\arrow[r,"\deR"] & L_{BU}
        \end{tikzcd}
    \end{center}
\begin{corollary}\label{cor:shortseqfromchardiag}
We have short exact sequences
    \[
    0\to \frac{\Omega^{od}/\Omega_U\times K^{-1}_{\RR/\ZZ}}{H^{od}(\RR)}\xrightarrow{i-j} \hat{K}\xrightarrow{c\circ\delta} L_{BU}\to 0,
    \]
    and
    \[
    0\to H^{od}(\RR)/L_U\xrightarrow{i\circ\deR} \hat{K}\xrightarrow{ch\times \delta} \Omega_{BU}\times_{H^{ev}(\RR)} K\to 0.
    \]
\end{corollary}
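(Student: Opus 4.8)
The plan is to obtain both short exact sequences as instances of the general fact that, given a commutative square, one can form a ``pushout'' or ``pullback'' short exact sequence built from two of the maps in the main $3\times 3$ diagram above, and that strict exactness is inherited from the strict exactness of the rows and columns of that diagram. Concretely, I would work entirely within the $3\times 3$ commutative diagram from the preceding theorem, whose rows and columns are strictly exact, and in which $H^{od}(\RR)/L_U$ sits in the top-left, $\hat K$ in the center, and $L_{BU}$ in the bottom-right.

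For the first sequence, I would first observe that the composite $c\circ\delta:\hat K\to L_{BU}$ is surjective and strict: $\delta$ is a strict quotient onto $K$ by \Cref{prop:toponKhat}, $c:K\to H^{ev}(\RR)$ has image $L_{BU}$ which is discrete, so $c$ (viewed as a map onto $L_{BU}$) is automatically strict, and the composite of a quotient by a subgroup with a map onto a discrete group is strict (this is essentially \Cref{lem:secondiso}, or a direct check since $L_{BU}$ is discrete). Its kernel consists of those $\hat x\in\hat K$ with $c(\delta(\hat x))=0$, i.e. $\delta(\hat x)\in\ker c=\Tor K$ (using exactness of the top row / the identification $\ker c=\im\beta=\Tor K$). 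I would then identify this kernel with the pushout $P:=\big(\Omega^{od}/\Omega_U\times K^{-1}_{\RR/\ZZ}\big)/H^{od}(\RR)$, where $H^{od}(\RR)$ embeds antidiagonally via $(\deR,\mathrm{mod}\,\ZZ)$ — noting $H^{od}(\RR)/L_U$ is what actually embeds, and the kernel of this antidiagonal embedding is exactly $L_U$, accounting for the quotient. The map $i-j:P\to\hat K$ is well-defined precisely because $i\circ\deR = j\circ(\mathrm{mod}\,\ZZ)$ (commutativity of the top-left square of the $3\times 3$ diagram), it is continuous since $i$ and $j$ are, it is injective by a diagram chase using injectivity of $i$ and $j$ and exactness of the left column, and its image is $\ker(c\circ\delta)$ by another chase: given $\hat x$ with $\delta\hat x\in\Tor K$, lift $\delta\hat x$ through $\beta$ to some $y\in K^{-1}_{\RR/\ZZ}$, then $\hat x - j(y)\in\ker\delta=\im i$, so $\hat x=i(\omega)+j(y)$. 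Strictness of $i-j$ would then follow from openness of $i$ and $j$ onto their images together with the quotient topology on $P$; alternatively, one deduces it formally from \Cref{lem:strictprop} since $\hat K/\im(i-j)\cong L_{BU}$ is discrete, making the quotient strict, hence $i-j$ strict.

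For the second sequence, I would run the dual argument: $ch\times\delta:\hat K\to\Omega_{BU}\times_{H^{ev}(\RR)}K$ lands in the fiber product because $\deR\circ ch = c\circ\delta$ (commutativity of the bottom-right square, i.e. the two ways around from $\hat K$ to $H^{ev}(\RR)$ agree). Surjectivity onto the fiber product: given $(\eta,x)$ with $\deR\,\eta=c\,x$, pick any preimage $\hat x$ of $x$ under $\delta$; then $ch(\hat x)$ and $\eta$ have the same de Rham class, so $ch(\hat x)-\eta\in d\Omega^{od}=\im(ch\circ i)$ (using \Cref{cor:bottomright} and exactness of the middle column), write it as $ch(i(\alpha))$ and replace $\hat x$ by $\hat x - i(\alpha)$. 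The kernel is $\ker ch\cap\ker\delta$; since $\ker\delta=\im i$ and $ch\circ i=d$, the kernel is $i(\ker d)=i(\ker(d:\Omega^{od}/\Omega_U\to\Omega_{BU}))$, and by strict exactness of the bottom row of the $3\times 3$ diagram (the sequence $H^{od}(\RR)/L_U\xrightarrow{\deR}\Omega^{od}/\Omega_U\xrightarrow{d}\Omega_{BU}$, equivalently \Cref{prop:bottomexact}) this kernel is exactly $\deR(H^{od}(\RR)/L_U)$, and $i\circ\deR$ is injective on $H^{od}(\RR)/L_U$. Continuity of $ch\times\delta$ is clear; strictness I would get from the fact that $ch$ and $\delta$ are each strict and the target carries the subspace topology from $\Omega_{BU}\times K$, so the product map $ch\times\delta:\hat K\to\Omega_{BU}\times K$ is strict onto its image, which is the fiber product.

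I expect the main obstacle to be the strictness (openness onto image) claims rather than the algebraic exactness, which is already essentially in \cite{SS10}. The cleanest route for the first sequence is the ``discrete cokernel'' trick: once we know $\hat K/\im(i-j)$ is discrete we get strictness of the quotient map for free from \Cref{lem:strictprop}, and then injectivity plus the identification of the image finishes it, so there the obstacle largely dissolves. For the second sequence the subtlety is that a product of two strict maps need not be strict in general, so I would need to argue directly — but here it works because the factor $\delta:\hat K\to K$ is onto a \emph{discrete} group, which forces $ch\times\delta$ to be open onto its image (a neighborhood basis at $0$ in $\hat K$ is given by open sets inside the identity component $\Omega^{od}/\Omega_U$, on which $\delta$ is constant and $ch$ restricts to the strict map $d$). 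I would phrase this last point carefully, as it is the one place where the special structure (discreteness of $K$ and of $L_{BU}$) is really being used.
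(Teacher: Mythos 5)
Your proposal is correct and follows essentially the same route as the paper, which simply records that exactness is a diagram chase in the character diagram and that continuity and strictness follow from \Cref{lem:strictprop}; you have carried out that chase and the openness arguments explicitly (using that the identity components $\Omega^{od}/\Omega_U\subseteq\hat K$ and $d\Omega^{od}\subseteq\Omega_{BU}$ are open and that $K$ and $L_{BU}$ are discrete). The only soft spot is the ``alternative'' claim that strictness of $i-j$ follows formally from discreteness of $\hat K/\im(i-j)$ --- that implication is not automatic --- but your primary argument via openness of $i$ on the open subgroup $\im i$ already settles it.
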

\begin{proof}
Exactness follows from a straightforward diagram tracing. Continuity is obvious and strictness follows from \Cref{lem:strictprop}.
\end{proof}


\section{Technical preparations}\label{sec4}
 The results in this section are slightly modified from those in \cite{BS10} to serve our purposes.
\subsection{Approximation of spaces by manifolds}
The following proposition generalizes \cite[Proposition 2.3]{BS10} by allowing the fundamental group to be finite. This generalization is not only necessary for the purpose of this paper, but also should allow us to handle KO-theory as well (recall that $\pi_1 BO=\ZZ/2$) without leaving the category of compact manifolds.\footnote{This is also pointed out in Bunke-Schick survey.}
\begin{proposition}\label{prop:approxibymfld}
Let $E$ be a pointed connected topological space. Assume that $\pi_1 E$ is finite and $\pi_k E$ is finitely generated for $k\ge 2$. Then there exists a sequence of pointed compact manifolds with boundary $\{E_k\}_{k\in \NN}$ together with embedding of manifolds $\mu_k: E_k\to E_{k+1}$ and continuous maps $\lambda_k:E_k\to E$ compatible with the embeddings, that is $\lambda_k=\lambda_{k+1}\circ\mu_k$, such that
\begin{enumerate}[(1)]
    \item $E_k$ is homotopy equivalent to a $k$-dimensional finite CW-complex;
    \item $\lambda_k$ is $k$-connected, i.e. $(\lambda_k)_*:\pi_* E_k\to\pi_* E$ is an isomorphism for $*<k$ and onto for $*=k$.
\end{enumerate}
\end{proposition}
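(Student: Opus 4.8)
The plan is to reduce the statement to a purely homotopy-theoretic construction—a CW model of $E$ with finitely many cells in each dimension—and then to thicken the skeleta of that model into compact manifolds with boundary by taking regular neighbourhoods of embeddings into Euclidean space.

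\emph{Building a finite-type CW model.} Replacing $E$ by a CW approximation (which preserves all homotopy groups), I would construct inductively an increasing sequence of \emph{finite} pointed CW complexes $X^{(0)}\subseteq X^{(1)}\subseteq\cdots$ with $\dim X^{(k)}\le k$, together with compatible maps $\phi_k\colon X^{(k)}\to E$ (meaning $\phi_{k+1}|_{X^{(k)}}=\phi_k$) such that each $\phi_k$ is $k$-connected and $X^{(k+1)}$ is obtained from $X^{(k)}$ by attaching finitely many $(k+1)$-cells. The low-dimensional stages use that $\pi_1 E$ is finite, hence finitely presented, together with finite generation of $\pi_2 E$, so that $X^{(1)}$ and $X^{(2)}$ can be taken finite with $\phi_2$ an isomorphism on $\pi_1$ and onto on $\pi_2$. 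For $k\ge 2$, given the $k$-connected $\phi_k$, its homotopy fibre $F_k$ is $(k-1)$-connected, hence simply connected, and one passes from $\phi_k$ to a $(k+1)$-connected $\phi_{k+1}$ by attaching to $X^{(k)}$ one $(k+1)$-cell for each element of a $\ZZ[\pi_1 E]$-module generating set of $\pi_k F_k\cong H_k(F_k;\ZZ)$. The decisive point is that only finitely many cells are needed, i.e. that $\pi_k F_k$ is finitely generated: since $\pi_1 E$ is finite, the universal cover $\widetilde E$ is simply connected with finitely generated homotopy groups, so $H_*(\widetilde E;\ZZ)$ is finitely generated by Serre's mod-$\mathcal C$ theory; the universal cover $\widetilde{X^{(k)}}$ of the finite complex $X^{(k)}$ is again finite; and comparing with the pulled-back fibration $F_k\to\widetilde{X^{(k)}}\to\widetilde E$ over the simply connected base $\widetilde E$, the Serre spectral sequence forces $H_*(F_k;\ZZ)$, hence $\pi_k F_k$, to be finitely generated. (Equivalently one can invoke that $\ZZ[\pi_1 E]$ is Noetherian when $\pi_1 E$ is finite.) The colimit $\phi\colon X=\bigcup_k X^{(k)}\to E$ is then a weak equivalence, and each $\phi_k$ is $k$-connected as required.

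\emph{Thickening to manifolds.} Embed the countable complex $X$ into $\RR^\infty=\bigcup_n\RR^n$ so that $X^{(k)}\subseteq\RR^{N_k}$ for a strictly increasing sequence $N_k$, and let $E_k$ be a closed regular neighbourhood of $X^{(k)}$ in $\RR^{N_k}$, chosen so that $E_k\subseteq\operatorname{int}E_{k+1}$; after smoothing, $E_k$ is a compact smooth manifold with boundary that deformation retracts onto $X^{(k)}$ via some $r_k\colon E_k\to X^{(k)}$, which gives property (1), and $\mu_k\colon E_k\hookrightarrow E_{k+1}$ is a smooth embedding. I would then define $\lambda_k$ by induction so that $\lambda_k\simeq\phi_k\circ r_k$: given $\lambda_k$, the map $(\phi_{k+1}\circ r_{k+1})|_{E_k}$ is homotopic to $\phi_{k+1}\circ\iota\circ r_k=\phi_k\circ r_k\simeq\lambda_k$, where $\iota\colon X^{(k)}\hookrightarrow X^{(k+1)}$ (using $\phi_{k+1}|_{X^{(k)}}=\phi_k$ and that $r_{k+1}|_{E_k}$ and $\iota\circ r_k$ are both homotopic in $E_{k+1}$ to the inclusion $E_k\hookrightarrow E_{k+1}$); since $E_k\hookrightarrow E_{k+1}$ is a cofibration, the homotopy extension property lets us modify $\phi_{k+1}\circ r_{k+1}$ to a homotopic map $\lambda_{k+1}$ with $\lambda_{k+1}|_{E_k}=\lambda_k$. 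Then $\lambda_{k+1}\circ\mu_k=\lambda_k$, and $\lambda_k\simeq\phi_k$ is $k$-connected, giving property (2).

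\emph{Main obstacle.} The only genuinely substantive step is the finiteness in the first part—ensuring that each stage of the CW model requires only finitely many cells. This is precisely where the hypothesis that $\pi_1 E$ is \emph{finite} (and not merely finitely presented) is used, via finite generation of the $\ZZ[\pi_1 E]$-modules $\pi_k F_k$, guaranteed either by Serre-class finiteness of the homology of the universal covers or by Noetherianity of $\ZZ[\pi_1 E]$. The remaining ingredients—CW approximation, smooth regular neighbourhood theory, and the homotopy extension property—are standard.
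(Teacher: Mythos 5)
Your proof is correct and follows essentially the same strategy as the paper: an inductive cell-by-cell (respectively handle-by-handle) construction of a $k$-connected approximation, with the crucial finiteness of the attaching data supplied by Serre's theorem for finite complexes with finite fundamental group, and geometric realization by regular/tubular neighbourhoods in Euclidean space. The only difference is organizational --- you build the finite CW model first and thicken afterwards, while the paper attaches handles directly to tubular neighbourhoods of the previously constructed manifolds at each stage --- and this does not change the substance of the argument.
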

\begin{proof}
The proof is essentially the same standard argument used in \cite[Proposition 2.3]{BS10}. One constructs $E_1$ as a thickening of a wedge sum of circles, each of which corresponds to an element of $\pi_1 E$. Then one can inductively construct $E_{k+1}$ from $E_{k}$ by thickening the latter in high dimensional Euclidean space and then
\begin{enumerate}[(i)]
	\item attach handles to annihilate the kernel of the epimorphism $(\lambda_{k})_*:\pi_{k} E_{k}\to \pi_{k}E$; and
	\item attach thickened spheres via connected sum to generate $\pi_{k+1} E$.
\end{enumerate}
We note that in step (i) we attach only finitely many handles because the kernel is finitely generated. Indeed, for $k=1$ the kernel is a finite index subgroup of a finitely generated group and therefore finitely generated; meanwhile for $k\ge 2$ since $E_{k}$ is a finite CW complex with a finite fundamental group, all its higher homotopy groups are finitely generated abelian groups due to a theorem of Serre, and consequently the kernel as a subgroup of a finitely generated abelian group is itself finitely generated. It is also clear that in step (ii) only finitely many thickened spheres are needed. Thus the $E_k$'s constructed in this way are compact.
\end{proof}

For the proof of the following Mittag-Leffler property, we refer the reader to \cite[Lemma 2.4]{BS10}.
\begin{corollary}\label{Mittag-Leffler}
Let $(E_k, \mu_k, \lambda_k)$ be as in \Cref{prop:approxibymfld}, then as subgroups of $H^s(E_k;A)$ we have
\[
\mu_k^{k+l,*} H^s(E_{k+l};A)=\mu_k^* H^s(E_{k+1};A)
\]
for all $k,l\ge 1$, $s\ge 0$ and abelian group $A$, where $\mu_k^{k+l}=\mu_{k+l-1}\circ\cdots\circ\mu_k$. \qed
\end{corollary}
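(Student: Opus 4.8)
The plan is to derive the Mittag--Leffler identity entirely from the two properties of the tower $(E_k,\mu_k,\lambda_k)$ recorded in \Cref{prop:approxibymfld}: that $E_k$ has the homotopy type of a $k$-dimensional CW complex, and that $\lambda_k$ is $k$-connected. First I would dispose of the trivial ranges. Since $E_k$ is homotopy equivalent to a $k$-dimensional CW complex, $H^s(E_k;A)=0$ whenever $s>k$, so for such $s$ both sides of the asserted equality are the zero subgroup of $H^s(E_k;A)$; and the case $l=1$ is tautological, as then $\mu_k^{k+1}=\mu_k$. So it remains to treat $s\le k$ and $l\ge 2$.

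Next I would show each bonding map $\mu_k\colon E_k\to E_{k+1}$ is $k$-connected. This follows from $\lambda_k=\lambda_{k+1}\circ\mu_k$ together with the connectivities of $\lambda_k$ and $\lambda_{k+1}$: on $\pi_j$ with $j<k$ both $(\lambda_k)_*$ and $(\lambda_{k+1})_*$ are isomorphisms, so $(\mu_k)_*$ is an isomorphism; on $\pi_k$ the map $(\lambda_k)_*$ is onto and $(\lambda_{k+1})_*$ is an isomorphism, so $(\mu_k)_*$ is onto. Hence the composite $\mu_{k+1}^{k+l}:=\mu_{k+l-1}\circ\cdots\circ\mu_{k+1}\colon E_{k+l}\to E_{k+1}$, being a composition of the maps $\mu_{k+j}$ ($1\le j\le l-1$) each of which is $(k+j)$-connected and hence at least $(k+1)$-connected, is itself $(k+1)$-connected.

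Now I would invoke the standard fact that a $(k+1)$-connected map between spaces of CW homotopy type induces an isomorphism on $H^s(-;A)$ for every coefficient group $A$ and every $s<k+1$. The one delicate point --- and the step I expect to be the main obstacle --- is that the manifolds $E_k$ need not be simply connected (indeed $\pi_1 E$ may be a nontrivial finite group), so this implication must be justified without a simple-connectivity hypothesis rather than by a naive appeal to the relative Hurewicz theorem. The clean route is to factor $\mu_{k+1}^{k+l}$ through its mapping cylinder as a cofibration $E_{k+l}\hookrightarrow Z$: up to homotopy rel $E_{k+l}$ the space $Z$ is obtained by attaching cells of dimension $>k+1$, so the relative cochain complex with $A$-coefficients vanishes in degrees $\le k+1$, and the long exact cohomology sequence of $(Z,E_{k+l})$ yields the isomorphism directly, with no $\pi_1$-condition required. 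Applying this to $\mu_{k+1}^{k+l}$ in the range $s\le k$ shows that $(\mu_{k+1}^{k+l})^*\colon H^s(E_{k+1};A)\to H^s(E_{k+l};A)$ is an isomorphism, in particular surjective.

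Finally I would assemble the pieces. From $\mu_k^{k+l}=\mu_{k+1}^{k+l}\circ\mu_k$ one has $\mu_k^{k+l,*}=\mu_k^*\circ(\mu_{k+1}^{k+l})^*$, whence
\[
\mu_k^{k+l,*}\bigl(H^s(E_{k+l};A)\bigr)=\mu_k^*\bigl((\mu_{k+1}^{k+l})^* H^s(E_{k+l};A)\bigr)=\mu_k^*\bigl(H^s(E_{k+1};A)\bigr),
\]
the last equality by surjectivity of $(\mu_{k+1}^{k+l})^*$ for $s\le k$. Combined with the vanishing of both sides for $s>k$, this proves the identity for all $k,l\ge 1$, $s\ge 0$ and all abelian $A$. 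Note that the inclusion ``$\subseteq$'' is formal from $\mu_k^{k+l,*}=\mu_k^*\circ(\mu_{k+1}^{k+l})^*$; the entire weight of the argument lies in the reverse inclusion, which is exactly where $(k+1)$-connectivity of $\mu_{k+1}^{k+l}$ enters.
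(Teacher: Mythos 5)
Your argument is correct and is essentially the proof of \cite[Lemma 2.4]{BS10} to which the paper defers for this corollary: both reduce to the facts that $H^s(E_k;A)=0$ for $s>k$ (since $E_k$ has the homotopy type of a $k$-dimensional CW complex) and that the bonding maps $E_{k+1}\to E_{k+l}$ are $(k+1)$-connected, hence induce cohomology isomorphisms in degrees $\le k$ via the mapping-cylinder/cell-attachment argument, which indeed needs no simple-connectivity hypothesis. The only blemishes are notational: you write $\mu_{k+1}^{k+l}$ and the associated cofibration with source and target interchanged (the map is $E_{k+1}\to E_{k+l}$, so $(\mu_{k+1}^{k+l})^*\colon H^s(E_{k+l};A)\to H^s(E_{k+1};A)$), though your composition formula $\mu_k^{k+l,*}=\mu_k^*\circ(\mu_{k+1}^{k+l})^*$ and the final computation use the correct directions.
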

\subsection{Universal classes}
Henceforth we fix an approximation $\{E_k,\mu_k,\lambda_k\}$ of $BU$. Let $u\in K(BU)=[BU,\ZZ\times BU]$ be the class that corresponds to the inclusion of identity component of $\ZZ\times BU$. Note that $c(u)\in H^{ev}(BU;\RR)$ is known as the universal Chern character whose degree $0$ part is $0$ as $u$ is of virtual rank $0$.

The proof of the next two propositions are similar to \cite{BS10}. The construction of the universal forms and universal differential K-theory classes are obstruction theoretical in nature: one inductively constructs the desired objects and, if necessary, goes one step back to modify the constructed ones for further extensions.
\begin{proposition}
There exists a sequence of differential forms $\omega_k\in \Omega_{BU}(E_k)$ so that $\deR(\omega_k)=\lambda_k^* c(u)$ and $\mu_k^*\omega_{k+1}=\omega_k$.
\end{proposition}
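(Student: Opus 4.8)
The plan is to build the sequence $\{\omega_k\}$ by induction on $k$, the crucial point being that differential forms extend along the embeddings $\mu_k$.

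First I would record that the prescribed de Rham class is realizable at each single level. By naturality of the Chern character, $\lambda_k^*c(u)=c(\lambda_k^*u)$ lies in the image of $c\colon K(E_k)\to H^{ev}(E_k;\RR)$, which is the integral lattice $L_{BU}(E_k)$; moreover $L_{BU}(E_k)$ is exactly the image of $\deR\colon\Omega_{BU}(E_k)\to H^{ev}(E_k;\RR)$, as recorded in \Cref{sec3}. Hence for each $k$ there is \emph{some} form in $\Omega_{BU}(E_k)$ with de Rham class $\lambda_k^*c(u)$; the only content of the proposition is to choose these forms compatibly with the restriction maps $\mu_k^*$.

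Next I would run the induction. For the base case $E_0$ is a point, $\lambda_0^*c(u)=0$ (the degree-zero part of $c(u)$ vanishes because $u$ has virtual rank $0$), and one takes $\omega_0=0$. Assume $\omega_0,\dots,\omega_k$ have been constructed with $\deR(\omega_i)=\lambda_i^*c(u)$ and $\mu_{i-1}^*\omega_i=\omega_{i-1}$. Choose any $\omega_{k+1}'\in\Omega_{BU}(E_{k+1})$ with $\deR(\omega_{k+1}')=\lambda_{k+1}^*c(u)$. Then $\mu_k^*\omega_{k+1}'-\omega_k$ is a closed even form on $E_k$ whose de Rham class is $\mu_k^*\lambda_{k+1}^*c(u)-\lambda_k^*c(u)=0$, so it equals $d\eta_k$ for some $\eta_k\in\Omega^{od}(E_k)$. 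I would then extend $\eta_k$ to a form $\xi_{k+1}\in\Omega^{od}(E_{k+1})$ with $\mu_k^*\xi_{k+1}=\eta_k$ and set $\omega_{k+1}:=\omega_{k+1}'-d\xi_{k+1}$. All three requirements follow: adding an exact form changes neither the periods nor the de Rham class, so $\omega_{k+1}\in\Omega_{BU}(E_{k+1})$ and $\deR(\omega_{k+1})=\lambda_{k+1}^*c(u)$; and $\mu_k^*\omega_{k+1}=\mu_k^*\omega_{k+1}'-d\eta_k=\omega_k$.

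The one step I would single out is the extension $\mu_k^*\xi_{k+1}=\eta_k$: a smooth form on the compact submanifold $\mu_k(E_k)\subseteq E_{k+1}$ always extends to a global smooth form, by pulling back along the projection of a tubular neighbourhood of $\mu_k(E_k)$ (available from the explicit construction in \Cref{prop:approxibymfld}, where $E_k$ sits inside a disk-bundle neighbourhood in $E_{k+1}$) and then multiplying by a cut-off function. I expect the only real care needed is making this extension smooth across the boundary and corner strata of these manifolds. Note that, unlike the analogous statement for the universal $\hat{K}$-classes treated in the next proposition, no ``going one step back'' is required here, and in particular the Mittag-Leffler property of \Cref{Mittag-Leffler} is not invoked, precisely because the sheaf of differential forms is fine.
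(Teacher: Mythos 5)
Your proof is correct and follows essentially the same inductive scheme as the paper: produce some form in $\Omega_{BU}(E_{k+1})$ with de Rham class $\lambda_{k+1}^*c(u)$, compare its restriction with $\omega_k$, and correct by the differential of an odd form extended from the embedded submanifold $E_k$ via a tubular neighbourhood. The only cosmetic difference is that the paper produces the level-$k$ form explicitly as a Chern--Weil character form (minus $n$) of a connection on a bundle pulled back from $BU(n)$, whereas you invoke the fact from \Cref{sec3} that $\deR\colon\Omega_{BU}\to H^{ev}(\RR)$ has image the lattice $L_{BU}$ containing the image of $c$; both are valid.
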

\begin{proof}
We inductively construct $\omega_k$. Set $\omega_0=0$. Suppose $\omega_{k-1}$ has been constructed. Since $E_k$ is homotopy equivalent to a finite CW-complex, the map $\lambda_k: E_k\to BU$ factors through $BU(n)$ for some big $n$, and thus pulls back a principal $U(n)$-bundle onto $E_k$. Enrich this bundle with a connection and let $\widetilde{\omega}_k$ be the Chern-Weil character form of the connection minus $n$. It is clear from construction that $\deR(\widetilde{\omega}_k)=\lambda_k^* c(u)$. Next, we compare $\mu_{k-1}^*\widetilde{\omega}_k$ to $\omega_{k-1}$. Now since
\begin{align*}
	\deR(\mu_{k-1}^*\widetilde{\omega}_k-\omega_{k-1})&=\mu_{k-1}^*\deR(\widetilde{\omega}_k)-\deR(\omega_{k-1})\\
	&= \mu_{k-1}^*\lambda_k^* c(u)-\lambda_{k-1}^* c(u)=0,
\end{align*}
there exists $\eta\in \Omega^{od}(E_{k-1})$ such that $\mu_{k-1}^*\widetilde{\omega}_k+d\eta=\omega_{k-1}$. Notice $\mu_{k-1}:E_{k-1}\to E_k$ is an embedding of manifolds, so we can extend $\eta$ to a form $\widetilde{\eta}$ on $E_k$ and define $\omega_k=\widetilde{\omega}_k+d\widetilde{\eta}$. It is straightforward to check $\omega_k$ satisfies the requirements.
\end{proof}

\begin{proposition}\label{prop:universalclass}
Let $(\hat{K},i,j,\delta,ch)$ be a differential K-functor. Then there exists a sequence $\hat{u}_k\in\hat{K}(E_k)$ so that $u_k:=\delta (\hat{u}_k)=\lambda_k^* (u)$, $ch(\hat{u}_k)=\omega_k$ and $\mu_k^*\hat{u}_{k+1}=\hat{u}_k$.
\end{proposition}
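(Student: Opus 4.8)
The plan is to construct the sequence $\hat u_k \in \hat K(E_k)$ by induction on $k$, imitating the construction of the $\omega_k$'s but now working inside $\hat K$ rather than $\Omega_{BU}$. The obstruction at each stage will be controlled by exactness of the upper-left square of the character diagram, i.e. by the strictly exact sequence $0\to \Omega^{od}/\Omega_U \xrightarrow{i}\hat K\xrightarrow{\delta} K\to 0$ together with the description of $\ker ch$ and $\ker \delta$. First I would set $\hat u_0 = 0$. Assuming $\hat u_{k-1}$ has been constructed with $\delta(\hat u_{k-1})=\lambda_{k-1}^*u$, $ch(\hat u_{k-1})=\omega_{k-1}$, and compatibility with the embeddings below $k-1$, I would lift $u_k := \lambda_k^* u \in K(E_k)$ to $\hat K(E_k)$: since $\delta$ is surjective, choose any $\hat v_k$ with $\delta(\hat v_k)=u_k$. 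Then $ch(\hat v_k)$ and $\omega_k$ are both closed even unitary forms with de Rham class $\lambda_k^* c(u)$, so their difference $ch(\hat v_k)-\omega_k$ lies in $d\Omega^{od}(E_k)$ (by \Cref{prop:bottomexact}/\Cref{cor:bottomright}), say $ch(\hat v_k)-\omega_k = d\theta_k$ with $\theta_k\in \Omega^{od}(E_k)$. Replacing $\hat v_k$ by $\hat v_k - i(\theta_k \bmod \Omega_U)$ and using the character diagram relation $ch\circ i = d$ gives a class $\hat w_k$ with $\delta(\hat w_k)=u_k$ and $ch(\hat w_k)=\omega_k$.

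Next I would arrange compatibility with $\mu_{k-1}$. Compare $\mu_{k-1}^*\hat w_k$ with $\hat u_{k-1}$: both have $\delta$-image $\lambda_{k-1}^* u$ and $ch$-image $\omega_{k-1}$, so their difference lies in $\ker\delta \cap \ker ch$. By \Cref{prop:toponKhat} and the fact that $ch\circ i = d$, this intersection is $i(\ker(d\colon \Omega^{od}/\Omega_U\to\Omega_{BU})) = i(\Omega^{od}_{cl}/\Omega_U)$, which by \Cref{prop:bottomexact} equals the image of $\deR\colon H^{od}(E_{k-1};\RR)\to \Omega^{od}/\Omega_U$, i.e. $i\circ j$ applied to the identity component of $K^{-1}_{\RR/\ZZ}$. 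So $\mu_{k-1}^*\hat w_k - \hat u_{k-1} = i(\deR(\alpha))$ for some $\alpha \in H^{od}(E_{k-1};\RR)$ — equivalently it is in the image of $\Omega^{od}_{cl}(E_{k-1})$ under $i$. Here is where I use that $\mu_{k-1}$ is an embedding of manifolds: a closed form on $E_{k-1}$ representing $\alpha$ can be extended to a closed (indeed, one can take a smooth extension and then correct, or more simply extend the cohomology class since $\lambda$-data is only needed up to the relevant degree — but cleanest is to extend the form using the collar and a cutoff, noting we only need a form on $E_k$ restricting correctly, not a closed one, provided we track the effect on $ch$). The careful point: I want an element $\hat c_k \in \hat K(E_k)$ with $\mu_{k-1}^*\hat c_k = i(\deR(\alpha))$, $\delta(\hat c_k)=0$, $ch(\hat c_k)=0$; taking $\hat c_k = i(\tilde\beta \bmod \Omega_U)$ where $\tilde\beta$ is a \emph{closed} extension of a closed representative of $\alpha$ works, since closed forms on $E_{k-1}$ extend to closed forms on $E_k$ exactly when the restriction $H^{od}(E_k;\RR)\to H^{od}(E_{k-1};\RR)$ hits $\alpha$ — and this is where \Cref{Mittag-Leffler} / the connectivity of $\lambda_k$ is needed to guarantee surjectivity in the relevant range, or alternatively one simply doesn't require closedness and instead absorbs the resulting exact correction into a further $i$-adjustment as in the construction of $\omega_k$. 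Then set $\hat u_k = \hat w_k - \hat c_k$.

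The main obstacle I expect is precisely this extension step: producing the correction class $\hat c_k$ on $E_k$ that restricts to the discrepancy $\mu_{k-1}^*\hat u_{k-1}-\hat u_{k-1}$... rather, that kills $\mu_{k-1}^*\hat w_k - \hat u_{k-1}$, while keeping $\delta$ and $ch$ untouched. Unlike the form-level construction where $d\tilde\eta$ automatically has vanishing de Rham class, here one must ensure the extension stays in the subgroup $i(\Omega^{od}/\Omega_U)$ and does not perturb $\delta(\hat w_k)=\lambda_k^*u$ — which it won't, since it lies in $\im i = \ker\delta$ — nor $ch(\hat w_k)=\omega_k$, which forces the extended form to be closed (or its non-closed part to be separately cancelled). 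Handling this cleanly is the analogue of the "go one step back to modify" remark preceding the proposition: if a closed extension is obstructed, one modifies $\hat u_{k-1}$ (which by \Cref{Mittag-Leffler} is possible without disturbing compatibility further down) so that the obstruction vanishes. Once $\hat u_k$ is built, the three required identities $\delta(\hat u_k)=\lambda_k^*u$, $ch(\hat u_k)=\omega_k$, $\mu_k^*\hat u_{k+1}=\hat u_k$ hold by construction, and the induction closes.
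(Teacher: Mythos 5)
Your plan follows essentially the same route as the paper: for each $k$ produce a class $\hat w_k$ with the prescribed $\delta$- and $ch$-images using the exactness in the character diagram (the paper packages this as \Cref{cor:shortseqfromchardiag}), observe that the discrepancy $\mu_{k-1}^*\hat w_k-\hat u_{k-1}$ lies in $\ker\delta\cap\ker ch=i\circ\deR\bigl(H^{od}(E_{k-1};\RR)\bigr)$, and then remove it. However, of the three fixes you float for the extension step, only the last one works, and it is exactly the paper's argument. The first fails because \Cref{Mittag-Leffler} does \emph{not} give surjectivity of the restriction $H^{od}(E_k;\RR)\to H^{od}(E_{k-1};\RR)$; it only says that the image of restriction from $E_{k+l}$ stabilizes at $l=1$, so the class $\alpha$ need not lift to $E_k$ and a closed extension can genuinely be obstructed. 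The second (drop closedness and absorb the exact error ``as in the construction of $\omega_k$'') also fails: there the target condition $\deR(\omega_k)=\lambda_k^*c(u)$ was insensitive to adding $d\widetilde\eta$, whereas here $ch(\hat u_k)$ must equal the already-fixed form $\omega_k$ on the nose, so a correction $i([\widetilde\beta])$ with $d\widetilde\beta\neq 0$ changes $ch$ and cannot be cancelled by a further $i$-adjustment without reintroducing the original extension problem.

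What does work is your closing remark, which is the paper's ``go one step back'' device made precise: keep the top-level class provisional, write the discrepancy at stage $k+1$ as $i\circ\deR(\rho)$ with $\rho\in H^{od}(E_k;\RR)$, use \Cref{Mittag-Leffler} to find $\widetilde\rho\in H^{od}(E_{k+1};\RR)$ with $\mu_{k-1}^{k+1,*}\widetilde\rho=\mu_{k-1}^*\rho$, then finalize $\hat u_k$ by adding $i\circ\deR(\rho-\mu_k^*\widetilde\rho)$ (which restricts trivially to $E_{k-1}$, so the lower compatibilities are undisturbed) and set the new provisional class to be $\widetilde u_{k+1}-i\circ\deR(\widetilde\rho)$. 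If you commit to that branch and delete the other two, your proof is complete and coincides with the paper's.
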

\begin{proof}
	Since $c(\lambda_k^* u)=\deR(\omega_k)$, by \Cref{cor:shortseqfromchardiag} we can independently for each $k$ find $\widetilde{u}_k\in \hat{K}(E_k)$ so that $\delta(\widetilde{u}_k)=\lambda_k^* u$ and $ch(\widetilde{u}_k)=\omega_k$. We now inductively modify $\widetilde{u}_k$ to $\hat{u}_k=\widetilde{u}_k+i\circ\deR(\rho_k)$ by appropriately choosing $\rho_k\in H^{od}(M;\RR)$ so that $\mu_k^*\hat{u}_{k+1}=\hat{u}_k$.
	
	Set $\hat{u}_0=0$ and $\hat{u}_1=\widetilde{u}_1$.  Assume by induction that we have found $\hat{u}_0,\dots,\hat{u}_{k-1}$ and $\hat{u}_k'$ as desired. Then by \Cref{cor:shortseqfromchardiag} $\mu_k^*\widetilde{u}_{k+1}-\hat{u}_k'=i\circ\deR(\rho)$ for some $\rho\in H^{od}(E_k;\RR)$. Thus $\mu_{k-1}^{k,*}\widetilde{u}_{k+1}-\hat{u}_{k-1}=i\circ\deR(\mu_{k-1}^*\rho)$. By \Cref{Mittag-Leffler}, we can find $\widetilde{\rho}\in H^{od}(E_{k+1};\RR)$ such that $\mu_{k-1}^{k+1,*}(\widetilde{\rho})=\mu_{k-1}^*\rho$. Now set $\hat{u}_k=\hat{u}_k'+i\circ\deR(\rho)-i\circ\deR(\mu_k^*\widetilde{\rho})$ and $\hat{u}_{k+1}'=\widetilde{u}_{k+1}-i\circ\deR(\widetilde{\rho})$. Then we have $\mu_{k-1}^*\hat{u}_k=\hat{u}_{k-1}$ and $\mu_k^*\hat{u}_{k+1}'=\hat{u}_k$. This completes the inductive step.
\end{proof}

\subsection{Chern-Simons integration formula}
\begin{proposition}\label{prop:cs}
Let $(\hat{K},i,j,\delta,ch)$ be a differential K-functor. Let $M$ be a compact manifold with corners and $\iota_0, \iota_1$ denote the inclusions of endpoints $M\hookrightarrow [0,1]\times M$. Then for any $\hat{x}\in \hat{K}([0,1]\times M)$, we have
\[
\iota_1^*\hat{x}-\iota_0^*\hat{x}=i\Big([\int_{[0,1]\times M/M} ch\hat{x}]\Big)
\]
where $\int_{[0,1]\times M/M}$ means integration over the fiber of the projection $p:[0,1]\times M\to M$.
\end{proposition}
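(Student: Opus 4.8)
The plan is to reduce the statement to the corresponding formula in de Rham theory by exploiting the homotopy invariance of the underlying K-theory together with the Chern-Weil type compatibility built into the character diagram. First I would compute the difference $\iota_1^*\hat x-\iota_0^*\hat x$ after applying $\delta$. Since $\delta\hat x\in K([0,1]\times M)$ and $p^*:K(M)\to K([0,1]\times M)$ is an isomorphism with $\iota_0^*,\iota_1^*$ both inverse to $p^*$, we get $\iota_1^*\delta\hat x=\iota_0^*\delta\hat x$, i.e. $\delta(\iota_1^*\hat x-\iota_0^*\hat x)=0$. By strict exactness of $0\to\Omega^{od}/\Omega_U\xrightarrow{i}\hat K\xrightarrow{\delta}K\to 0$ (\Cref{prop:toponKhat}), there is a (unique) class $\alpha\in\Omega^{od}(M)/\Omega_U(M)$ with $i(\alpha)=\iota_1^*\hat x-\iota_0^*\hat x$. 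It then remains to identify $\alpha$ with $[\int_{[0,1]\times M/M}ch\,\hat x]$.

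Next I would apply $ch$ to both sides. On the one hand $ch\circ i=\deR$ lands in $d\Omega^{od}(M)\subseteq\Omega_{BU}(M)$ (this is the left column of the character diagram, or \Cref{prop:bottomexact}), so $ch(i(\alpha))=d\tilde\alpha$ for any form representative $\tilde\alpha$ of $\alpha$. On the other hand $ch(\iota_1^*\hat x-\iota_0^*\hat x)=\iota_1^*ch\,\hat x-\iota_0^*ch\,\hat x$, and by the ordinary Stokes formula for fiber integration over $[0,1]\times M\to M$,
\[
\iota_1^*ch\,\hat x-\iota_0^*ch\,\hat x=d\int_{[0,1]\times M/M}ch\,\hat x
\]
since $ch\,\hat x$ is closed. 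Hence $d\tilde\alpha=d\int_{[0,1]\times M/M}ch\,\hat x$, so $\tilde\alpha-\int_{[0,1]\times M/M}ch\,\hat x$ is a closed odd form on $M$, and therefore $\alpha$ and $[\int_{[0,1]\times M/M}ch\,\hat x]$ differ by the class in $\Omega^{od}/\Omega_U$ of a \emph{closed} odd form, i.e. by an element in the image of $\deR:H^{od}(M;\RR)/L_U\to\Omega^{od}/\Omega_U$, equivalently by an element of $\im(i\circ\deR)\subseteq\ker ch$.

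The remaining — and genuinely substantive — step is to kill this correction term, i.e. to show $\alpha=[\int_{[0,1]\times M/M}ch\,\hat x]$ on the nose rather than merely up to a flat class. Here I would use naturality and a universal-example argument in the spirit of \cite{BS10}: both sides define natural transformations $\hat K([0,1]\times(-))\to\Omega^{od}(-)/\Omega_U(-)$ on compact manifolds with corners, and their difference is a natural transformation landing in $\im(i\circ\deR)$, hence (using strictness and the identification of that image with $H^{od}(-;\RR)/L_U$) a natural transformation $\hat K([0,1]\times(-))\to H^{od}(-;\RR)/L_U$. One then evaluates on the universal classes $\hat u_k\in\hat K(E_k)$ of \Cref{prop:universalclass} pulled back along $p:[0,1]\times E_k\to E_k$: since $ch(p^*\hat u_k)=p^*\omega_k$ has vanishing fiber integral, the right-hand side is $0$ for $p^*\hat u_k$, and by naturality plus surjectivity statements coming from the $E_k$ approximating $BU$ (together with homotopy invariance to handle the $[0,1]$-direction), the correction vanishes identically. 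I expect this last universality/naturality bookkeeping — making precise that it suffices to test on $p^*\hat u_k$ and that the flat ambiguity is detected faithfully there — to be the main obstacle; the Stokes computation and the diagram chase in the first two paragraphs are routine.
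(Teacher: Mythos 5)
Your first two paragraphs are correct and overlap with part of the paper's computation, but the third step is a genuine gap, and it is exactly the step you flag as the ``main obstacle.'' The correction term $D(\hat x)=\alpha-[\int_{[0,1]\times M/M}ch\,\hat x]$, which lives in $\im(i\circ\deR)\cong H^{od}(M;\RR)/L_U$, is supported precisely on the part of $\hat x$ that is \emph{not} homotopy-theoretic: since $\hat K$ is not homotopy invariant, a general class on $[0,1]\times M$ is not of the form $g^*(p^*\hat u_k)$ for a map $g\colon M\to E_k$; it differs from a pulled-back class by a term $i([\omega])$ with $\omega\in\Omega^{od}([0,1]\times M)$, and differential forms on the cylinder are not classified by maps into any universal example. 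So testing on $p^*\hat u_k$ (where both sides vanish trivially) says nothing about $D(i([\omega]))$, which is where all the content of the proposition sits. Worse, reducing an arbitrary classifying map $F_k\colon[0,1]\times M\to E_k$ to one factoring through $p$ changes $F_k^*\hat u_k$ by exactly the homotopy formula you are trying to prove, so that route is circular.

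The fix --- and the paper's actual argument --- is to run your diagram chase on the cylinder rather than on $M$: since $\iota_0\circ p$ is homotopic to the identity, $\delta(\hat x-p^*\iota_0^*\hat x)=0$, hence $\hat x=p^*\iota_0^*\hat x+i([\omega])$ for some $\omega\in\Omega^{od}([0,1]\times M)$. Writing $\omega=\omega_0+dt\wedge\omega_1$, both sides of the claimed identity are then computed explicitly in terms of $\omega$: on the one hand $\iota_1^*\hat x-\iota_0^*\hat x=i([\iota_1^*\omega-\iota_0^*\omega])=i([\iota_1^*\omega_0-\iota_0^*\omega_0])$ because $p\circ\iota_0=p\circ\iota_1=\mathrm{id}_M$ and $\iota_t^*(dt\wedge\omega_1)=0$; on the other hand $\int_{[0,1]\times M/M}ch\,\hat x=\int_{[0,1]\times M/M}d\omega=\iota_1^*\omega_0-\iota_0^*\omega_0-d\bigl(\int_0^1\omega_1\,dt\bigr)$, and the exact term dies in $\Omega^{od}/\Omega_U$. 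This gives equality on the nose, with no residual flat ambiguity, so no universal-example step is needed.
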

\begin{proof}
	Since $\iota_0\circ p$ is homotopic to the identity map, we have $\delta(\hat{x}-p^*\iota_0^*\hat{x})=\delta\hat{x}-p^*\iota_0^*(\delta\hat{x})=0$ in $K([0,1]\times M)$. It follows that
	\[
	\hat{x}=p^*\iota_0^*\hat{x}+i([\omega])
	\]
	for some $\omega\in \Omega^{od}([0,1]\times M)$. Therefore $ch\hat{x}=ch(p^*\iota_0^*(\hat{x}))+ch\circ i([\omega])=p^*\iota_0^*(ch\hat{x})+d\omega$. Now write $\omega=\omega_0+dt\wedge \omega_1$ with $\omega_0(t),\omega_1(t)\in\Omega^*(M)$. Then
	\begin{align*}
		\frac{\partial}{\partial t}\lrcorner ch\hat{x}&= \frac{\partial}{\partial t}\lrcorner d\omega\quad(\text{since }p^*\iota_0^*(ch\hat{x})\text{ is independent of }t)\\
		&=\frac{\partial}{\partial t}\lrcorner (d\omega_0-dt\wedge d\omega_1)\\
		&=\partial_t\omega_0-d\omega_1,
	\end{align*}
	and
	\[
	\int_{[0,1]\times M/M} ch\hat{x}=\omega_0|_{t=1}-\omega_0|_{t=0}-d\big(\int_0^1\omega_1 dt\big).
	\]
	So we have
	\[
	i\Big(\big[\int_{[0,1]\times M/M}ch\hat{x}\big]\Big)=i\Big(\big[\iota_1^*\omega_0-\iota_0^*\omega_0\big]\Big).
	\]
	Meanwhile, again using $\hat{x}=p^*\iota_0^*\hat{x}+i([\omega])$ we have
	\begin{align*}
		\iota_1^*\hat{x}-\iota_0^*\hat{x}&=\iota_1^* p^*\iota_0^*\hat{x}+\iota_1^*i([\omega])-\iota_0^* p^*\iota_0^*\hat{x}-\iota_0^* i([\omega])\\
		&=i([\iota_1^*\omega-\iota_0^*\omega])\quad(\text{since }p\circ\iota_0=id_M=p\circ\iota_1)\\
		&= i\Big(\big[\int_{[0,1]\times M/M}ch\hat{x}\big]\Big).
	\end{align*}
\end{proof}

\section{Uniqueness of differential K-functor}\label{sec5}
We now prove

\begin{theorem}\label{maintheorem}
Any two differential K-functors $(\hat{K}',i',j',\delta',ch')$ and $(\hat{K},i,j,\delta,ch)$ are naturally equivalent via a natural transformation $\Phi:\hat{K}'\to \hat{K}$; such $\Phi$ is unique.
\end{theorem}

\subsection{Reduction to connected manifolds, virtual rank} Let $(\hat{K},i,j,\delta,ch)$ be a differential K-functor. By compactness, $M$ has only finitely many connected components. The differential K-group of $M$ is related to those of its connected components as follows:
\begin{lemma}
Let $M_1,\dots,M_r$ be all connected components of $M$. Then the product of maps $\hat{K}(M)\to\hat{K}(M_i)$ induced by the inclusions is an isomorphism $\hat{K}(M)\cong \hat{K}(M_1)\times\cdots\times \hat{K}(M_r)$ of abelian topological groups.
\end{lemma}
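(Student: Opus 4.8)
The plan is to verify that the natural product map $\hat{K}(M) \to \prod_i \hat{K}(M_i)$ is simultaneously an algebraic isomorphism and a homeomorphism. First I would establish the algebraic statement. Since $M = \coprod_i M_i$ is a disjoint union in the category of compact manifolds with corners, and the inclusions $\iota_i : M_i \hookrightarrow M$ induce maps $\iota_i^* : \hat{K}(M) \to \hat{K}(M_i)$, I claim the tuple $(\iota_1^*, \dots, \iota_r^*)$ is an isomorphism of abelian groups. The cleanest route is to chase the character diagram: by \Cref{prop:toponKhat} and the surrounding short exact sequences, $\hat{K}$ sits in strictly exact sequences whose outer terms are built from $\Omega^{od}/\Omega_U$, $K$, $\Omega_{BU}$, $H^{ev}(\RR)$, etc. Each of these functors manifestly takes disjoint unions to products: for differential forms this is immediate ($\Omega^k(\coprod M_i) = \prod \Omega^k(M_i)$ as topological vector spaces, since the Fréchet seminorms split), for $K$-theory and singular cohomology it is a standard property of additive cohomology theories, and the subgroups $\Omega_U, \Omega_{BU}$ of forms with integral periods are cut out componentwise. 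Then a five-lemma argument (applied to the short exact sequence $0 \to \Omega^{od}/\Omega_U \xrightarrow{i} \hat{K} \xrightarrow{\delta} K \to 0$, comparing $M$ with $\prod M_i$) shows $(\iota_i^*)$ is a bijection.

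Next I would upgrade this to an isomorphism of topological groups. The map $(\iota_i^*) : \hat{K}(M) \to \prod_i \hat{K}(M_i)$ is continuous because each $\iota_i^*$ is a natural transformation and hence, by the functoriality clause in \Cref{prop:toponKhat} (the topology is functorial with respect to smooth mappings), continuous; a map into a product is continuous iff each component is. For the inverse, I would use the strict exactness of the defining sequence together with \Cref{lem:strictprop}: both source and target fit into short exact sequences $0 \to (\Omega^{od}/\Omega_U)(\cdot) \to \hat{K}(\cdot) \to K(\cdot) \to 0$, the outer maps $(\iota_i^*)$ are strict isomorphisms on those outer terms (for forms because the Fréchet topology splits over components, for the discrete group $K$ trivially), so by the five-lemma in the category of topological groups — more precisely, by diagram-chasing openness through the strictly exact rows — the middle map is also strict, hence open, hence a homeomorphism onto its image, which is all of $\prod_i \hat{K}(M_i)$.

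Alternatively, and perhaps more transparently, I would invoke the uniqueness part of \Cref{prop:toponKhat} directly: the product topology on $\prod_i \hat{K}(M_i)$, transported via the algebraic isomorphism, makes the sequence $0 \to \Omega^{od}/\Omega_U(M) \to \hat{K}(M) \to K(M) \to 0$ strictly exact (because strict exactness is preserved under finite products of topological groups, and $\Omega^{od}/\Omega_U$ and $K$ are additive), so by the uniqueness clause it must coincide with the topology we put on $\hat{K}(M)$. This sidesteps any delicate open-mapping argument.

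The main obstacle I anticipate is purely bookkeeping: confirming that every functor appearing in the character diagram genuinely converts finite disjoint unions into finite products \emph{as topological groups}, not merely as abstract groups — in particular that the subspace and quotient topologies on $\Omega^{od}/\Omega_U$, on $K^{-1}_{\RR/\ZZ}$, and on $\hat{K}$ are compatible with the product decomposition. For the Fréchet pieces this is routine since a finite product of Fréchet spaces is Fréchet with the product topology and closed subspaces/quotients behave well; for the discrete pieces ($K$, $L_{BU}$, $\pi_0$) it is trivial; the only mild care is needed for $K^{-1}_{\RR/\ZZ}$, whose topology was defined by hand in \Cref{prop:topexact}, but there too the defining strictly exact sequence is additive, so the product topology is forced. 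Once all these additivity statements are in hand, the five-lemma (or the uniqueness argument) finishes the proof with no further difficulty.
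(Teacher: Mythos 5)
Your proposal is correct and follows essentially the same route as the paper: the paper's own (one-line) proof likewise reduces everything to the additivity of $\Omega^*$ and $K$ as topological groups, with the five lemma applied to the strictly exact sequence $0 \to \Omega^{od}/\Omega_U \to \hat{K} \to K \to 0$ and the uniqueness of the topology from \Cref{prop:toponKhat} doing the rest. You have simply written out the bookkeeping that the paper leaves implicit.
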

\begin{proof}
	This follows from that $\Omega^*(M)\cong\Omega^*(M_1)\times\cdots\times\Omega^*(M_r)$ and $K(M)\cong K(M_1)\times\cdots\times K(M_r)$ as abelian topological groups.
\end{proof}

So, in order to prove the uniqueness of differential K-functor, we are reduced to considering connected manifolds only. From now on, all manifolds are assumed to be connected.

\begin{definition}[virtual rank] For $\hat{x}\in \hat{K}(M)$ we define its virtual rank to be the virtual rank of $\delta(\hat{x})\in K(M)$.
\end{definition}
Since $\Omega^{od}(pt)=0$, we can deduce from the character diagram that $\delta:\hat{K}(pt)\to K(pt)$ is an isomorphism.
The mapping $M\to pt$ induces $K(pt)\to K(M)$. Denote the image of $n\in\ZZ\cong K(pt)$ by $\n$, and similarly define $\hat{\n}\in \hat{K}(M)$. The virtual ranks of $\n$ and $\hat{\n}$ are both $n$.

\subsection{Natural transformation $\Phi$} Let $(\hat{K}',i',j',\delta',ch')$ be another differential K-functor, and choose universal classes $\hat{u}_k\in \hat{K}(E_k)$, $\hat{u}_k'\in \hat{K}'(E_k)$ as in \Cref{prop:universalclass}.

For a manifold $M$ and $\hat{x}'\in\hat{K}'(M)$ of virtual rank $n$, we consider $\delta'(\hat{x}'-\hat{\n}')\in K(M)$. Since $\delta'(\hat{x}'-\hat{\n}')$ has virtual rank $0$, then by compactness of $M$ we can find a smooth mapping $f_k: M\to E_k$ so that $\delta'(\hat{x}')-\n=f_k^* u_k=\delta'(f_k^*\hat{u}_k')$. Therefore there is a unique $[\eta]\in \Omega^{od}/L_U(M)$ such that
\[
\hat{x}'=\hat{\n}'+f_k^*\hat{u}_k'+i'([\eta]).
\]
Define $\Phi: \hat{K}'(M)\to \hat{K}(M)$ by
\[
\hat{x}=\Phi(\hat{x}'):=\hat{\n}+f_k^*\hat{u}_k+i([\eta]).
\]
\begin{lemma}
$\Phi$ is well-defined.
\end{lemma}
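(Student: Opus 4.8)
The statement to prove is that $\Phi$ is well-defined, i.e.\ the element $\hat{x} = \hat{\n} + f_k^*\hat{u}_k + i([\eta]) \in \hat{K}(M)$ does not depend on the auxiliary choices made in the construction: the integer $k$, the smooth map $f_k: M\to E_k$ with $\delta'(\hat{x}') - \n = f_k^* u_k$, and (given $k$ and $f_k$) the class $[\eta]\in\Omega^{od}/L_U(M)$. Note $[\eta]$ is actually forced once $k$ and $f_k$ are fixed: the defining relation $\hat{x}' = \hat{\n}' + f_k^*\hat{u}_k' + i'([\eta])$ determines $i'([\eta])$, and $i'$ is injective on $\Omega^{od}/\Omega_U$, while the ambiguity between $[\eta]\in\Omega^{od}/L_U$ and its image in $\Omega^{od}/\Omega_U$ is exactly what makes $i([\eta])$ and $i'([\eta])$ independent of the lift (since $i$ kills $L_U$: $L_U = \im(\deR: H^{od}(\RR)\to\Omega^{od}/\Omega_U)$ sits in the image of the closed-form part, and one checks $i\circ\deR$ factors through the universal cover so the lattice maps to $0$ — more carefully, $i([\eta])$ for $[\eta]\in\Omega^{od}/L_U$ means $i$ applied to the image in $\Omega^{od}/\Omega_U$, and this is well-defined because $\Omega_U/L_U$-worth of ambiguity maps into $i(\Omega_U/\text{exact})$... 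I should instead just say: since $\eta$ is a genuine differential form and $i$ is defined on $\Omega^{od}/\Omega_U$, the expression $i([\eta])$ only depends on $\eta$ mod $\Omega_U$, and the relation above pins down $\eta$ mod $\Omega_U$ uniquely because $i'$ is injective). So the real content is: independence of the pair $(k, f_k)$.

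\textbf{Key steps.} First I would reduce to comparing two choices $(k, f_k)$ and $(k', f_{k'})$ with $k' \ge k$: using the maps $\mu_k^{k'}: E_k\to E_{k'}$ and the compatibility $\mu_k^*\hat{u}_{k+1} = \hat{u}_k$ (hence $\mu_k^{k',*}\hat{u}_{k'} = \hat{u}_k$, and likewise for $\hat{u}'$) and $\lambda_k = \lambda_{k'}\circ\mu_k^{k'}$, replacing $f_{k'}$ by $\mu_k^{k'}\circ f_k: M\to E_{k'}$ gives the same $\hat{x}$, so it suffices to compare two maps $f, g: M\to E_k$ (for the same, large, $k$) both satisfying $\delta'(\hat{x}') - \n = f^* u_k = g^* u_k$. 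Second, since $\delta'(f^*\hat{u}_k') = \delta'(g^*\hat{u}_k')$, the two candidate values $f^*\hat{u}_k' + i'([\eta_f])$ and $g^*\hat{u}_k' + i'([\eta_g])$ are both equal to $\hat{x}' - \hat{\n}'$, so $f^*\hat{u}_k' - g^*\hat{u}_k' = i'([\eta_g] - [\eta_f])$ lies in the image of $i'$; I need the parallel statement $f^*\hat{u}_k - g^*\hat{u}_k = i([\eta_g] - [\eta_f])$ in $\hat{K}(M)$, which would finish the proof. Third — the heart of the matter — since $f^* u_k = g^* u_k$ in $K(M)$ and $E_k$ is $k$-connected over $BU$ for $k$ large, $f$ and $g$ are smoothly homotopic (after increasing $k$ if necessary, using that $[M, E_k] \to [M, BU] = K(M)$ is a bijection in the stable range by Proposition on approximation by manifolds); pick a smooth homotopy $H: [0,1]\times M \to E_k$ with $H\circ\iota_0 = f$, $H\circ\iota_1 = g$. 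Then apply the Chern--Simons integration formula (Proposition~\ref{prop:cs}) to $\hat{x} = H^*\hat{u}_k \in \hat{K}([0,1]\times M)$: it gives
\[
g^*\hat{u}_k - f^*\hat{u}_k = \iota_1^* H^*\hat{u}_k - \iota_0^* H^*\hat{u}_k = i\Big(\big[\textstyle\int_{[0,1]\times M/M} ch(H^*\hat{u}_k)\big]\Big) = i\Big(\big[\textstyle\int_{[0,1]\times M/M} H^*\omega_k\big]\Big),
\]
and the same computation with $\hat{K}'$ and $ch'$ gives $g^*\hat{u}_k' - f^*\hat{u}_k' = i'\big(\big[\int_{[0,1]\times M/M} H^*\omega_k\big]\big)$ — \emph{the same form} $\int_{[0,1]\times M/M} H^*\omega_k$, since $ch(\hat{u}_k) = \omega_k = ch'(\hat{u}_k')$ by construction in Proposition~\ref{prop:universalclass}. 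Hence if I set $[\zeta] := [\int_{[0,1]\times M/M} H^*\omega_k] \in \Omega^{od}/\Omega_U(M)$, then from the $\hat{K}'$-side $i'([\eta_g] - [\eta_f]) = i'(-[\zeta])$ so $[\eta_g] - [\eta_f] = -[\zeta]$ in $\Omega^{od}/\Omega_U$ (injectivity of $i'$), and therefore on the $\hat{K}$-side $f^*\hat{u}_k + i([\eta_f]) = g^*\hat{u}_k + i([\zeta]) + i([\eta_f]) = g^*\hat{u}_k + i([\eta_g])$, i.e.\ the two values of $\hat{x}$ agree. Adding back $\hat{\n}$ gives $\Phi(\hat{x}')$ independent of all choices.

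\textbf{Main obstacle.} The step I expect to be most delicate is the homotopy argument: upgrading the equality $f^* u_k = g^* u_k$ in $K(M)$ to an honest \emph{smooth} homotopy $f \simeq g: M \to E_k$, possibly only after passing to a larger $E_{k'}$. This needs that $\lambda_{k'}: E_{k'} \to BU$ is highly connected (from Proposition~\ref{prop:approxibymfld}) so that the set of (smooth, hence by approximation all) homotopy classes of maps $M\to E_{k'}$ surjects onto — and in the relevant stable range bijects with — $[M, BU] = K(M)$; then $f$ and $\mu$-pushed $g$ agree in $[M, E_{k'}]$ and so are homotopic through a homotopy that can be smoothed. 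One must be slightly careful that the homotopy can be taken relative to nothing in particular but landing in $E_{k'}$ (a manifold with boundary), which is fine since $[0,1]\times M$ is again a compact manifold with corners and smooth maps into $E_{k'}$ suffice for Proposition~\ref{prop:cs}. Everything else is bookkeeping with the compatibility relations $\mu_k^*\hat{u}_{k+1}=\hat{u}_k$, $ch(\hat{u}_k)=\omega_k$, and the injectivity of $i$ and $i'$.
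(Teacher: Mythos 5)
Your proposal is correct and follows essentially the same route as the paper: reduce to two smoothly homotopic choices of $f_k$ (after passing to a larger $k$), apply the Chern--Simons integration formula (\Cref{prop:cs}) in both $\hat{K}'$ and $\hat{K}$ to the pullback of the universal class along the homotopy, and use $ch(\hat u_k)=\omega_k=ch'(\hat u_k')$ together with the injectivity of $i'$ to match the correction terms. Your extra care about the reduction via $\mu_k^{k'}$, the smooth-homotopy upgrade, and the fact that $[\eta]$ is forced once $f_k$ is fixed only makes explicit what the paper leaves implicit.
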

\begin{proof}
    The only choice made is the mapping $f_k$. Two such choices are smooth homotopic if we pass to a bigger $k$. Suppose we have two such mappings $f_{k,i}: M\to E_k$ for $i=0,1$ and between which a smooth homotopy $F_k:[0,1]\times M\to E_k$. Then by \Cref{prop:cs} for $\hat{K}'$, we have
    \[
    f_{k,1}^*\hat{u}_k'-f_{k,0}^*\hat{u}_k'=i'\Big(\big[\int_{[0,1]\times M/M} F_k^*\omega_k\big]\Big).
    \]
    Then by construction we find $\eta_0,\eta_1\in\Omega^{od}(M)$ so that
    \begin{align*}
        \hat{x}'&=\hat{\n}'+f_{k,0}^*\hat{u}_k'+i'([\eta_0])\\
        &=\hat{\n}'+f_{k,1}^*\hat{u}_k'+i'([\eta_1]).
    \end{align*}
    It follows that
    \[
    i'([\eta_1-\eta_0])=i'\Big(\big[\int_{[0,1]\times M/M} F_k^*\omega_k\big]\Big).
    \]
    Now that $i'$ is injective, we see
    \[
    [\eta_1-\eta_0]=\big[\int_{[0,1]\times M/M} F_k^*\omega_k\big].
    \]
    Then applying \Cref{prop:cs} to $\hat{K}$ we have
    \[
    f_{k,1}^*\hat{u}_k-f_{k,0}^*\hat{u}_k=i\Big(\big[\int_{[0,1]\times M/M} F_k^*\omega_k\big]\Big)=i([\eta_1-\eta_0]).
    \]
    This proves $\Phi$ is well-defined.
\end{proof}
\begin{lemma}\label{lem:commuttativity}
$\Phi$ is natural and $\Phi\circ i=i'$, $\delta\circ\Phi=\delta'$, $ch\circ\Phi=ch'$.
\end{lemma}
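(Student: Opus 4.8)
The plan is to verify the three commutativity conditions by unwinding the definition of $\Phi$. Recall that for $\hat{x}'\in\hat{K}'(M)$ of virtual rank $n$, we write $\hat{x}'=\hat{\n}'+f_k^*\hat{u}_k'+i'([\eta])$ and set $\Phi(\hat{x}')=\hat{\n}+f_k^*\hat{u}_k+i([\eta])$. Naturality is the easiest: given a smooth map $g: N\to M$, the class $g^*\hat{x}'$ has the same virtual rank $n$, and $g^*\hat{x}'=\widehat{\mathbf{n}}'+(f_k\circ g)^*\hat{u}_k'+i'(g^*[\eta])$ is a presentation of the required form (using that $g^*\hat{\n}'=\widehat{\mathbf{n}}'$ on $N$, $g^*i'=i'g^*$, and $g^*f_k^*=(f_k\circ g)^*$). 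Since $\Phi$ is well-defined independent of the choice of $f_k$, applying $\Phi$ on $N$ to this presentation gives $\widehat{\mathbf{n}}+(f_k\circ g)^*\hat{u}_k+i(g^*[\eta])=g^*(\hat{\n}+f_k^*\hat{u}_k+i([\eta]))=g^*\Phi(\hat{x}')$, using naturality of $i$ and of the universal classes $\mu_k^*\hat{u}_{k+1}=\hat{u}_k$ together with $\lambda_k\circ\mu_k=\lambda_{k+1}$.

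For $\Phi\circ i=i'$: given $[\eta]\in\Omega^{od}/\Omega_U(M)$, the class $i'([\eta])$ has virtual rank $0$ (since $\delta'\circ i'=0$), so we may take $n=0$ and $f_k$ the constant map to the basepoint, giving the presentation $i'([\eta])=\widehat{\mathbf{0}}'+f_k^*\hat{u}_k'+i'([\eta])$ — here we use that $f_k^*\hat{u}_k'=0$ when $f_k$ is constant, because $\mu_0^*\hat{u}_1'=\hat{u}_0'=0$ and a constant map factors through $E_0$. Hence by definition $\Phi(i'([\eta]))=\widehat{\mathbf{0}}+0+i([\eta])=i([\eta])$.

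For $\delta\circ\Phi=\delta'$ and $ch\circ\Phi=ch'$: apply $\delta$ to $\Phi(\hat{x}')=\hat{\n}+f_k^*\hat{u}_k+i([\eta])$. Using $\delta\circ i=0$, $\delta(\hat{\n})=\mathbf{n}$, and $\delta(f_k^*\hat{u}_k)=f_k^*\delta(\hat{u}_k)=f_k^*u_k=f_k^*\lambda_k^*(u)$ by \Cref{prop:universalclass}, we get $\delta(\Phi(\hat{x}'))=\mathbf{n}+f_k^*\lambda_k^*(u)$; but the same computation with $\delta'$ applied to the presentation of $\hat{x}'$ shows $\delta'(\hat{x}')=\mathbf{n}+f_k^*\lambda_k^*(u)$ as well, so the two agree. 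Similarly, applying $ch$ and using $ch\circ i=d$, $ch(\hat{\n})=0$ (the virtual-rank-$n$ trivial class has vanishing character form), and $ch(f_k^*\hat{u}_k)=f_k^*ch(\hat{u}_k)=f_k^*\omega_k$ from \Cref{prop:universalclass}, we obtain $ch(\Phi(\hat{x}'))=f_k^*\omega_k+d\eta$, which equals $ch'(\hat{x}')$ by the identical computation for $\hat{K}'$. There is no real obstacle here — the content was already packed into the well-definedness lemma and the universal-class construction; the only point requiring a moment's care is the bookkeeping of constant maps and trivial classes (that $f_k^*\hat{u}_k=0$ for $f_k$ constant and that $ch(\hat{\n})=0$), which follows from $\hat{u}_0=0$ and from $\hat{\n}$ being pulled back from a point where $\Omega^{od}=0$.
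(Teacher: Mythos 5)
Your verification follows the same route the paper intends (its proof is left as ``straightforward verifications''), and three of the four checks -- naturality, $\Phi\circ i=i'$, and $\delta\circ\Phi=\delta'$ -- are carried out correctly. The one slip is in the $ch$ computation: the assertion $ch(\hat{\n})=0$ is false, and the justification offered (that $\hat{\n}$ is pulled back from a point where $\Omega^{od}=0$) does not apply, since $ch$ takes values in \emph{even} forms and $\Omega^{ev}(pt)=\RR\neq 0$. In fact $ch(\hat{\n})$ is the constant $0$-form $n$: on a point $\deR\colon\Omega_{BU}(pt)\to H^{0}(pt;\RR)$ is injective and $\deR\circ ch=c\circ\delta$ forces $ch(\hat{\n})=c(\n)=n$. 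This does not break the argument, because exactly the same reasoning gives $ch'(\hat{\n}')=n$, so the two constant terms agree and one still gets $ch(\Phi(\hat{x}'))=n+f_k^*\omega_k+d\eta=ch'(\hat{x}')$; but the step should be stated as $ch(\hat{\n})=ch'(\hat{\n}')=n$ rather than as a vanishing claim.
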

\begin{proof}
    Straightforward verifications.
\end{proof}
\begin{proposition}\label{prop:grouphom} 
$\Phi$ is a group homomorphism.
\end{proposition}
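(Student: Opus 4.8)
The plan is to show that $\Phi$ respects addition by reducing to the universal situation where both summands are pulled back from the approximating manifolds $E_k$. First I would take $\hat{x}', \hat{y}' \in \hat{K}'(M)$ of virtual ranks $m$ and $n$ respectively. By passing to a large enough $k$, I can simultaneously choose smooth maps $f_k, g_k : M \to E_k$ with $\delta'(\hat{x}') - \mathbf{m} = f_k^* u_k$ and $\delta'(\hat{y}') - \mathbf{n} = g_k^* u_k$, so that
\[
\hat{x}' = \widehat{\mathbf{m}}' + f_k^* \hat{u}_k' + i'([\eta]), \qquad \hat{y}' = \widehat{\mathbf{n}}' + g_k^* \hat{u}_k' + i'([\zeta])
\]
for unique $[\eta], [\zeta] \in \Omega^{od}/\Omega_U(M)$. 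Adding these gives $\hat{x}' + \hat{y}' = \widehat{\mathbf{m}+\mathbf{n}}' + f_k^*\hat{u}_k' + g_k^*\hat{u}_k' + i'([\eta+\zeta])$, using that $\widehat{\mathbf{m}}' + \widehat{\mathbf{n}}' = \widehat{\mathbf{m}+\mathbf{n}}'$ (which follows since $\mathbf{m} + \mathbf{n} = \widehat{\mathbf{m}+\mathbf{n}}$ type identities hold in $K(pt) \cong \hat{K}(pt)$ and pull back).

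The key point is then to understand the sum $f_k^*\hat{u}_k' + g_k^*\hat{u}_k'$ in terms of a single pullback. The class $u \in K(BU)$ is (virtual) rank zero, and the H-space structure on $\mathbf{Z} \times BU$ coming from the tensor-product-unrelated additive structure (Whitney sum) gives a map $\sigma : BU \times BU \to \mathbf{Z} \times BU$ with $\sigma^* u = \mathrm{pr}_1^* u + \mathrm{pr}_2^* u$. After approximating, one finds for $k$ large a smooth map $a_k : E_k \times E_k \to E_{k'}$ (some $k' \geq k$) compatible with the $\lambda$'s up to the relevant connectivity, so that $a_k^* \hat{u}_{k'}' = \mathrm{pr}_1^*\hat{u}_k' + \mathrm{pr}_2^*\hat{u}_k' + i'(\text{correction})$, where the correction term is a universal form class pulled back from $E_k \times E_k$; by the same Mittag-Leffler/obstruction argument used in \Cref{prop:universalclass} this correction can be arranged to vanish, or at worst to be a controlled universal class that is handled identically in $\hat{K}$ and $\hat{K}'$. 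Composing with $(f_k, g_k) : M \to E_k \times E_k$ then exhibits $f_k^*\hat{u}_k' + g_k^*\hat{u}_k'$ as $h^* \hat{u}_{k'}' + i'(\text{form on }M)$ for a single map $h : M \to E_{k'}$, and the analogous identity holds verbatim in $\hat{K}$ since $\Phi$ is defined by the same recipe with $\hat{u}_k$ in place of $\hat{u}_k'$. Matching the form-valued correction terms on both sides—again using injectivity of $i$ and $i'$ together with the Chern-Simons formula \Cref{prop:cs} exactly as in the well-definedness proof—yields $\Phi(\hat{x}' + \hat{y}') = \Phi(\hat{x}') + \Phi(\hat{y}')$.

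The main obstacle I anticipate is making precise the existence and compatibility of the "addition map" $a_k$ on the level of the approximating manifolds: the H-space multiplication on $\mathbf{Z} \times BU$ need not restrict to a map $E_k \times E_k \to E_k$, so one must pass to a larger index and invoke the connectivity of $\lambda_k$ together with the cellular approximation / obstruction theory underlying \Cref{prop:approxibymfld} and \Cref{Mittag-Leffler}. A cleaner alternative, which I would actually pursue to avoid this bookkeeping, is to sidestep the universal addition map entirely: instead observe that $\hat{x}' + \hat{y}'$ is itself a class of virtual rank $m+n$, so it has its own presentation $\hat{x}'+\hat{y}' = \widehat{\mathbf{m}+\mathbf{n}}' + F_k^*\hat{u}_k' + i'([\xi])$ via some map $F_k$; one then compares the two maps $F_k$ and "$f_k \oplus g_k$" (the latter meaning a map classifying $f_k^*u_k + g_k^*u_k$) directly—they induce the same element of $K$-theory, hence are smoothly homotopic after enlarging $k$, and \Cref{prop:cs} converts this homotopy into the equality of the form corrections needed. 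Since the identical homotopy and identical Chern-Simons computation apply to $\hat{K}$, the desired additivity of $\Phi$ follows. Either way, the heart of the matter is that $\Phi$ is built from a single universal class by pullback, so any additive relation among pullbacks of universal classes—being detected by $K$-theory and differential forms, both of which $\Phi$ preserves by \Cref{lem:commuttativity}—is automatically preserved.
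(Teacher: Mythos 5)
Your overall strategy---reduce additivity to a universal statement over $BU\times BU$---is sound in spirit, and it is the same vanishing ($H^{od}(BU\times BU;\RR)=0$) that ultimately powers the paper's proof. But there is a genuine gap at the decisive step. After writing $\hat{x}'+\hat{y}'=\widehat{\mathbf{m}+\mathbf{n}}'+h^*\hat{u}_{k'}'+i'([\xi])$ for a single classifying map $h$, what you must compare is the difference $h^*\hat{u}_{k'}'-f_k^*\hat{u}_k'-g_k^*\hat{u}_k'=i'([\alpha'])$ with its unprimed counterpart $h^*\hat{u}_{k'}-f_k^*\hat{u}_k-g_k^*\hat{u}_k=i([\alpha])$. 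Applying $ch'$ and $ch$ only gives $d\alpha=d\alpha'$, so $[\alpha]-[\alpha']$ is an a priori nonzero class coming from $H^{od}(M;\RR)$, and this class is exactly the deviation $\Phi(\hat{x}'+\hat{y}')-\Phi(\hat{x}')-\Phi(\hat{y}')$. Neither injectivity of $i$ nor \Cref{prop:cs} can kill it: the Chern--Simons formula controls the change of a single pullback under a homotopy of classifying maps (which is why it suffices for well-definedness), whereas here the two expressions are structurally different---a sum of two pullbacks versus one pullback---and no homotopy of maps into $E_{k'}$ relates them. Your ``cleaner alternative'' does not sidestep this: replacing $F_k$ by a homotopic classifying map changes nothing by well-definedness, and the problem is entirely concentrated in the universal correction $a_k^*\hat{u}_{k'}-\mathrm{pr}_1^*\hat{u}_k-\mathrm{pr}_2^*\hat{u}_k$ versus its primed analogue; your assertion that this correction ``can be arranged to vanish, or at worst \dots is handled identically in $\hat{K}$ and $\hat{K}'$'' is precisely the content of the proposition, not a consequence of the tools you cite.

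The route could be completed, but only by constructing a compatible tower of addition maps $E_k\times E_k\to E_{k'}$ and running the Mittag--Leffler argument of \Cref{Mittag-Leffler} on the product tower to show that the ambiguity class dies in $H^{od}(E_k\times E_k;\RR)$ after pulling back from a higher stage, using $H^{od}(BU\times BU;\RR)=0$ in the stable range. The paper avoids all of this bookkeeping: it defines the deviation $\widetilde{B}(\hat{x}',\hat{y}')=\Phi(\hat{x}'+\hat{y}')-\Phi(\hat{x}')-\Phi(\hat{y}')$, checks that it is killed by $\delta$ and $ch$ and vanishes on $i'$-classes, so that it factors through a natural transformation $B:K\times K\to H^{od}(\RR)$, and then invokes Adams' representability theorem and the Yoneda lemma to identify $B$ with a homotopy class of maps $(\ZZ\times BU)\times(\ZZ\times BU)\to\prod_{i=1}^\infty K(\RR,2i-1)$, which is null since $H^{od}((\ZZ\times BU)\times(\ZZ\times BU);\RR)=0$. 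You should either adopt that representability argument or supply the universal addition maps and the product Mittag--Leffler step explicitly.
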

\begin{proof}
Consider the deviation of $\Phi$ from being a group homomorphism, that is, a natural transformation $$\widetilde{B}:\hat{K}'\times\hat{K}'\to \hat{K}$$ such that $$\Phi(\hat{x}'+\hat{y}')=\Phi(\hat{x}')+\Phi(\hat{y}')+\widetilde{B}(\hat{x}',\hat{y}').$$
Clearly $\widetilde{B}(\hat{x}',\hat{y}')=\widetilde{B}(\hat{y}',\hat{x}')$. Also from our definition and \Cref{lem:commuttativity} we have
\begin{align*}
    0&= \Phi(\hat{x}'+i'([\eta]))-i([\eta])-\Phi(\hat{x}')=\widetilde{B}(\hat{x}',i'([\eta]))\\
    0&= \delta(\Phi(\hat{x}'+\hat{y}')-\Phi(\hat{x}')-\Phi(\hat{y}'))=\delta(\widetilde{B}(\hat{x}',\hat{y}'))\\
    0&= ch(\Phi(\hat{x}'+\hat{y}')-\Phi(\hat{x}')-\Phi(\hat{y}'))=ch(\widetilde{B}(\hat{x}',\hat{y}')).
\end{align*}
It follows $\widetilde{B}$ factors over a natural transformation $$B: K(M)\times K(M)\to H^{od}(M;\RR).$$ Then by that every finite CW-complex is homotopy equivalent to a compact manifold with boundary, every continuous map between manifolds is homotopic to a smooth map, Adams' variant of Brown representation theorem \cite{Ad71} and the Yoneda lemma, we see $B$ is represented by the homotopy class of some map $$(\ZZ\times BU)\times (\ZZ\times BU)\to \prod_{i=1}^\infty K(\RR,2i-1)$$ which must be homotopy equivalent to the trivial map since $H^{od}((\ZZ\times BU)\times (\ZZ\times BU);\RR)=0$. This proves the natural transformation $B=0$ and therefore $\widetilde{B}=0$.
\end{proof}
\begin{corollary}
$\Phi$ is continuous and strict.
\end{corollary}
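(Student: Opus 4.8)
The plan is to reduce continuity and strictness of $\Phi$ to the two short exact sequences recorded in \Cref{cor:shortseqfromchardiag}, exploiting the naturality and the three commutativity relations $\Phi\circ i=i'$, $\delta\circ\Phi=\delta'$, $ch\circ\Phi=ch'$ already established in \Cref{lem:commuttativity}. First I would observe that $\Phi$ fits into a commutative ladder between the short exact sequences
\[
0\to H^{od}(\RR)/L_U\xrightarrow{i\circ\deR} \hat{K}'\xrightarrow{ch\times\delta'} \Omega_{BU}\times_{H^{ev}(\RR)} K\to 0
\]
and the same sequence for $\hat{K}$, where the left vertical map is the identity on $H^{od}(\RR)/L_U$ (because $\Phi\circ i=i'$) and the right vertical map is the identity on $\Omega_{BU}\times_{H^{ev}(\RR)}K$ (because $ch\circ\Phi=ch'$ and $\delta\circ\Phi=\delta'$). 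So $\Phi$ is a map of extensions over the identity on both the sub and the quotient.

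Next I would invoke the five-lemma in the category of abelian groups to conclude $\Phi$ is an isomorphism of groups (this also re-proves bijectivity, independent of the earlier remark about three conditions implying isomorphism). For the topology: since the right-hand map $ch\times\delta'$ is strict (hence a topological quotient onto its image) and $ch\times\delta=(ch\times\delta')\circ\Phi$ after identifying quotients, continuity of $\Phi$ reduces to continuity on the identity component $H^{od}(\RR)/L_U$, where $\Phi$ restricts to the identity map $i\circ\deR$ vs $i'\circ\deR$ — but these have the same source and $\Phi\circ(i\circ\deR)=i'\circ\deR$, which is continuous. More carefully: $\hat{K}'$ carries the topology for which $i'$ is strict and $\Omega^{od}/\Omega_U$ is the identity component, by \Cref{prop:toponKhat}; the open subgroup $i'(\Omega^{od}/\Omega_U)$ is carried by $\Phi$ into the open subgroup $i(\Omega^{od}/\Omega_U)$ via the identity of $\Omega^{od}/\Omega_U$ (using $\Phi\circ i'=i$... rather $\Phi\circ i=i'$, so on this subgroup $\Phi$ is literally $i\circ (i')^{-1}=\mathrm{id}$), so $\Phi$ is a local homeomorphism at the identity, hence continuous and open, hence strict.

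The main subtlety — and the step I would treat most carefully — is matching the topologies correctly: one must check that the topology on $\hat{K}$ from \Cref{prop:toponKhat} genuinely makes $i'(\Omega^{od}/\Omega_U)$ clopen with the subspace topology equal to that of $\Omega^{od}/\Omega_U$, and that translating by the discrete quotient $K$ shows $\Phi$ is an open continuous bijection on each component. Since $\hat{K}'$ and $\hat{K}$ have the same discrete set of components $K(M)$ (identified compatibly by $\delta\circ\Phi=\delta'$), and $\Phi$ is the identity $\Omega^{od}/\Omega_U\to\Omega^{od}/\Omega_U$ on the identity component and commutes with the $K$-action up to the universal classes, $\Phi$ is a homeomorphism on each component and therefore a homeomorphism overall; in particular it is continuous and strict. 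I expect no serious obstacle here beyond this bookkeeping, because all the hard analytic content (Fréchet topologies, open mapping theorem, strictness of $i,ch,\deR,d$) was already absorbed in \Cref{sec3}.
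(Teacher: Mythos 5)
Your argument is correct and is essentially the paper's proof: the paper simply notes that $\Phi$ is a group homomorphism whose restriction to the open identity component is continuous and strict, which is exactly your observation that on $i'(\Omega^{od}/\Omega_U)$ the map $\Phi$ is the identity of $\Omega^{od}/\Omega_U$ (via the $i$-compatibility) and then propagates by translation. The opening ladder/five-lemma discussion is superfluous for this corollary (it belongs to the next one, where the paper uses the simpler sequence $0\to\Omega^{od}/\Omega_U\to\hat K\to K\to 0$), but it does no harm.
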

\begin{proof}
    This follows from that $\Phi$ is a group homomorphism and that $\Phi$ restricted to identity component is continuous and strict.
\end{proof}
\begin{corollary}
$\Phi$ is an isomorphism of abelian topological groups.
\end{corollary}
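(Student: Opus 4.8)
The plan is to assemble the pieces that have already been established into the five-lemma argument sketched in the introduction. We now have a natural transformation $\Phi:\hat{K}'\to\hat{K}$ which is a continuous, strict group homomorphism (by the preceding corollaries) and which satisfies three of the four commutativity conditions, namely $\Phi\circ i'=i$, $\delta\circ\Phi=\delta'$ and $ch\circ\Phi=ch'$. Wait --- one must be careful: strictly speaking the five-lemma requires compatibility with one of the two strictly exact rows $0\to\Omega^{od}/\Omega_U\xrightarrow{i}\hat K\xrightarrow{\delta}K\to 0$. Since $\Phi$ is compatible with $i$ (via the identity on $\Omega^{od}/\Omega_U$) and with $\delta$ (via the identity on $K$), the five-lemma in the category of groups immediately gives that $\Phi$ is a bijection. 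Because $\Phi$ is in addition continuous and strict (hence open onto its image, which is all of $\hat K$), $\Phi$ is a homeomorphism. Therefore $\Phi$ is an isomorphism of abelian topological groups. This is what the corollary asserts, so in fact the proof is short.

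Concretely, I would write: restrict $\Phi$ to the identity components. On identity components $\Phi$ agrees with the identity map of $\Omega^{od}/\Omega_U$ (this is essentially the content of $\Phi\circ i' = i$ together with the fact that $i',i$ identify $\Omega^{od}/\Omega_U$ with the respective identity components, as in \Cref{prop:toponKhat}). Hence $\Phi$ maps $\hat K'_e$ isomorphically onto $\hat K_e$. On components, $\Phi$ induces the map $\pi_0\hat K'\to\pi_0\hat K$; but $\delta$ identifies $\pi_0\hat K'$ and $\pi_0\hat K$ with the discrete group $K(M)$ (again by \Cref{prop:toponKhat}, since $\Omega^{od}/\Omega_U$ is the identity component), and $\delta\circ\Phi=\delta'$ forces this induced map to be the identity of $K(M)$. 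A homomorphism of topological groups that restricts to an isomorphism on identity components and induces an isomorphism on $\pi_0$ is itself an isomorphism of topological groups; applying this gives the result.

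\begin{proof}
By \Cref{prop:toponKhat} the identity component of $\hat{K}'$ (resp.\ $\hat{K}$) is $i'(\Omega^{od}/\Omega_U)$ (resp.\ $i(\Omega^{od}/\Omega_U)$), and $\delta'$ (resp.\ $\delta$) induces an isomorphism $\pi_0\hat{K}'\cong K(M)$ (resp.\ $\pi_0\hat{K}\cong K(M)$) onto the discrete group $K(M)$. Since $\Phi\circ i'=i$, the homomorphism $\Phi$ carries the identity component of $\hat{K}'$ onto that of $\hat{K}$ and, under the identifications just named, restricts there to the identity map of $\Omega^{od}/\Omega_U$; in particular this restriction is an isomorphism of topological groups. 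Likewise, since $\delta\circ\Phi=\delta'$, the induced map $\pi_0\hat{K}'\to\pi_0\hat{K}$ is, under the identifications with $K(M)$, the identity map, hence an isomorphism. It follows that $\Phi$ is bijective (five lemma applied to the strictly exact rows $0\to\Omega^{od}/\Omega_U\to\hat{K}'\to K(M)\to 0$ and $0\to\Omega^{od}/\Omega_U\to\hat{K}\to K(M)\to 0$). As $\Phi$ is moreover continuous and strict, its inverse is continuous as well, so $\Phi$ is an isomorphism of abelian topological groups.
\end{proof}
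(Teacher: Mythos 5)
Your proof is correct and is essentially the paper's argument: apply the five lemma to the commutative diagram with strictly exact rows $0\to\Omega^{od}/\Omega_U\to\hat{K}'\to K\to 0$ and $0\to\Omega^{od}/\Omega_U\to\hat{K}\to K\to 0$, identities on the outer terms and $\Phi$ in the middle, then use continuity and strictness of $\Phi$ to upgrade the group isomorphism to one of topological groups. The additional discussion of identity components and $\pi_0$ is a harmless elaboration of the same idea.
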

\begin{proof}
	Apply the above corollary and five lemma to the following commutative diagram:
	\[
	\begin{tikzcd}
		0 \ar[r]&\Omega^{od}/\Omega_U\ar[equal]{d}\ar[r,"i'"] & \hat{K}'\ar[d,"\Phi"]\ar[r,"\delta' "]& K\ar[equal]{d}\ar[r] & 0\\
		0 \ar[r]&\Omega^{od}/\Omega_U\ar[r,"i"] & \hat{K}\ar[r,"\delta"]& K\ar[r] & 0
	\end{tikzcd}
	\]
\end{proof}
\subsection{The proof of $\Phi\circ j'=j$}\label{rigidity of j}
It remains to prove $\Phi\circ j'=j$. By continuity and density of $K^{-1}_{\QQ/\ZZ}$ in $K^{-1}_{\RR/\ZZ}$, it suffices to show that $\Phi\circ j'|_{K^{-1}_{\QQ/\ZZ}}=j|_{K^{-1}_{\QQ/\ZZ}}$. 


Since $K^{-1}_{\QQ/\ZZ}=\varinjlim\limits_n K^{-1}_{\ZZ/n}$, we consider the composition
\[
j_{\ZZ/n}: K^{-1}_{\ZZ/n}\to K^{-1}_{\QQ/\ZZ}\xrightarrow{j} \hat{K}
\]
and similarly define $j_{\ZZ/n}': K^{-1}_{\ZZ/n}\to \hat{K}'$. It suffices to show $\Phi\circ j_{\ZZ/n}'= j_{\ZZ/n}$ for all $n$.

Let $\GL(\ZZ/n)$ represent the functor $K^{-1}_{\ZZ/n}$ (i.e. the degree $-1$ space in the Omega-spectrum representing the cohomology theory $K^{*}_{\ZZ/n}$). Then we have
\[
\pi_* \GL(\ZZ/n)=
\begin{cases}
\ZZ/n \quad& *\text{ is odd},\\
0\quad &*\text{ is even}.
\end{cases}
\]
We caution the reader that $\GL(\ZZ/n)$ is \textit{not} the general linear group over the ring $\ZZ/n$.

By \Cref{prop:approxibymfld}, we can find an approximation $\{F_k,\xi_k,\theta_k\}$ of $\GL(\ZZ/n)$ by compact manifolds-with-boundary.
\begin{center}
        \begin{tikzcd}
        F_k\ar[rr,"\theta_k"]\ar[rd,"\xi_k"']& &F_{k+1}\ar[ld,"\xi_{k+1}"]\\
        & \GL(\ZZ/n) &
        \end{tikzcd}
    \end{center}
Let $v\in K^{-1}_{\ZZ/n}(\GL(\ZZ/n))$ be the class corresponding to the identity map of $\GL(\ZZ/n)$, and denote $v_k=\xi_k^* v\in K^{-1}_{\ZZ/n}(F_k)$.

For each $\rho\in K^{-1}_{\ZZ/n}(M)$, we can find a smooth mapping $g_k: M\to F_k$ so that $\rho=g_k^* v_k$. Then by functoriality, the desired identity $\Phi\circ j_{\ZZ/n}'(\rho)=j_{\ZZ/n}(\rho)$ will follow from:
\begin{proposition}
$\Phi\circ j_{\ZZ/n}'(v_k)=j_{\ZZ/n}(v_k)$.
\end{proposition}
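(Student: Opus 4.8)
The plan is to reduce the identity to computations on the manifolds $F_k$ approximating $\GL(\ZZ/n)$, where the difference $\Phi(j'_{\ZZ/n}(v_k)) - j_{\ZZ/n}(v_k)$ can be analyzed by obstruction theory exactly as in Bunke--Schick. First I would observe that $\delta$ and $ch$ kill this difference: since $ch \circ j' = 0 = ch \circ j$ (the image of $j'$ lands in torsion, which is killed by $ch$) and $\delta \circ \Phi = \delta'$ together with $\delta \circ j = \beta = \delta' \circ j'$ after the appropriate identifications, we get $\delta(\Phi(j'_{\ZZ/n}(v_k)) - j_{\ZZ/n}(v_k)) = 0$ and $ch(\Phi(j'_{\ZZ/n}(v_k)) - j_{\ZZ/n}(v_k)) = 0$. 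By the strictly exact sequence $0 \to \Omega^{od}/\Omega_U \xrightarrow{i} \hat{K} \xrightarrow{\delta} K \to 0$ and the fact that $ch \circ i = d$, this forces $\Phi(j'_{\ZZ/n}(v_k)) - j_{\ZZ/n}(v_k) = i\circ \deR([\tau_k])$ for a unique class $[\tau_k] \in H^{od}(F_k;\RR)$. Moreover, since both $j'_{\ZZ/n}(v_k)$ and $j_{\ZZ/n}(v_k)$ are $n$-torsion (being in the image of $K^{-1}_{\ZZ/n}$), the difference is $n$-torsion, so $[\tau_k]$ is an $n$-torsion element of the real vector space $H^{od}(F_k;\RR)$, hence $[\tau_k] = 0$.

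That argument, however, needs $H^{od}(F_k;\RR)$ to genuinely receive $[\tau_k]$ as a well-defined class, which is immediate; the subtlety is whether the difference really is $n$-torsion in $\hat{K}(F_k)$. Here I would use that $v_k \in K^{-1}_{\ZZ/n}(F_k)$ is itself $n$-torsion as an element of $K^{-1}_{\ZZ/n}$ (the whole group is $\ZZ/n$-linear since it is represented by a space with homotopy groups $\ZZ/n$), and that $j'_{\ZZ/n}$, $j_{\ZZ/n}$, and $\Phi$ are all group homomorphisms (the last by \Cref{prop:grouphom}); hence $n\cdot(\Phi(j'_{\ZZ/n}(v_k)) - j_{\ZZ/n}(v_k)) = \Phi(j'_{\ZZ/n}(n v_k)) - j_{\ZZ/n}(n v_k) = 0$. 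Then $n \cdot i(\deR[\tau_k]) = i(\deR[n\tau_k]) = 0$, and injectivity of $i$ and $\deR$ on the relevant groups (torsion-freeness of $H^{od}(F_k;\RR)$) gives $[\tau_k] = 0$.

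I expect the main obstacle to be not the torsion bookkeeping but verifying that the morphisms $j$ and $j'$ interact correctly with the natural transformation $\Phi$ at the level of the class $v_k$ — specifically, one must be careful that the identity $\Phi \circ j' = j$ we are after cannot simply be assumed when computing $\delta(\Phi(j'_{\ZZ/n}(v_k)))$. The resolution is that $\delta \circ j$ and $\delta' \circ j'$ are both the Bockstein $\beta$ composed with $K^{-1}_{\ZZ/n} \to K^{-1}_{\RR/\ZZ}$, which is a fixed topologically-defined map independent of the differential refinement, so $\delta(\Phi(j'_{\ZZ/n}(v_k))) = \delta'(j'_{\ZZ/n}(v_k)) = \beta(\cdots) = \delta(j_{\ZZ/n}(v_k))$ genuinely holds without circularity. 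Once this is in place, the vanishing of $[\tau_k]$ by the torsion argument finishes the proof, and the compatibility $\theta_k^* v_{k+1} = v_k$ together with $\Phi$'s naturality ensures the classes $[\tau_k]$ are compatible, so the single vanishing statement suffices for all $k$.
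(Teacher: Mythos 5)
Your reduction of the problem to a class $[\tau_k]$ with $\Phi\circ j_{\ZZ/n}'(v_k)-j_{\ZZ/n}(v_k)=i\circ\deR([\tau_k])$ is exactly the paper's first step, but the torsion argument you then use to kill $[\tau_k]$ has a genuine gap. The composite $i\circ\deR\colon H^{od}(F_k;\RR)\to\hat{K}(F_k)$ is \emph{not} injective: by \Cref{cor:shortseqfromchardiag} its kernel is the full-rank lattice $L_U$, so $[\tau_k]$ is only well defined in the torus $H^{od}(F_k;\RR)/L_U$, and your appeal to ``injectivity of $i$ and $\deR$'' is false for $\deR$. Knowing that the difference is annihilated by $n$ therefore only yields $n[\tau_k]\in L_U$, i.e.\ $[\tau_k]\in\tfrac{1}{n}L_U$; the $n$-torsion subgroup of $H^{od}(F_k;\RR)/L_U$ is $(\ZZ/n)^{\dim H^{od}(F_k;\RR)}$, which is nontrivial whenever $H^{od}(F_k;\RR)\neq 0$. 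And $H^{k}(F_k;\RR)$ is in general nonzero (only the degrees $<k$ and $>k$ are forced to vanish by the approximation), so torsion-freeness of the vector space $H^{od}(F_k;\RR)$ buys you nothing: the step ``an $n$-torsion element of a real vector space is zero'' is applied to the wrong group. (As a secondary point, $K^{-1}_{\ZZ/n}$ need not be annihilated by $n$, only by $n^{2}$, but even with the correct exponent the argument fails for the same reason.)

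The paper closes exactly this gap by exploiting the inverse system $\{F_k\}$, which your last paragraph relegates to an afterthought but which is in fact the essential mechanism. Naturality gives $\deR(\theta_k^{k+l,*}\sigma_{k+l})=\deR(\sigma_k)$ after cancelling the injective $i$; since $\GL(\ZZ/n)$ has finite homotopy groups and $\xi_{k+l}$ is $(k+l)$-connected, one has $H^{s}(F_{k+l};\RR)=0$ for $s\le k$, while $H^{s}(F_k;\RR)=0$ for $s\ge k+1$ because $F_k$ has the homotopy type of a $k$-dimensional complex; hence every homogeneous component of $\theta_k^{k+l,*}\sigma_{k+l}$ vanishes and $\deR(\sigma_k)=0$. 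You should replace the torsion argument by this degree-splitting argument across two stages of the approximation.
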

\begin{proof}
Notice $\delta\circ\Phi\circ j_{\ZZ/n}'=\delta'\circ j_{\ZZ/n}'$ coincides with $\delta\circ j_{\ZZ/n}$; both being the composition $K^{-1}_{\ZZ/n}\to K^{-1}_{\RR/\ZZ}\xrightarrow{\beta} K$. Hence $\Phi\circ j_{\ZZ/n}'(v_k)-j_{\ZZ/n}(v_k)\in\ker \delta$. Meanwhile, since $ch\circ\Phi\circ j_{\ZZ/n}'=ch'\circ j_{\ZZ/n}'=0=ch\circ j_{\ZZ/n}$, we see $\Phi\circ j_{\ZZ/n}'(v_k)-j_{\ZZ/n}(v_k)\in\ker ch$. Then from \Cref{cor:shortseqfromchardiag} we can find $\sigma_k\in H^{od}(F_k;\RR)$ so that $$i\circ\deR(\sigma_k)=\Phi\circ j_{\ZZ/n}'(v_k)-j_{\ZZ/n}(v_k).$$ Similarly, for all $l\ge 1$, we can find $\sigma_{k+l}\in H^{od}(F_{k+l};\RR)$ such that $$i\circ\deR(\sigma_{k+l})=\Phi\circ j_{\ZZ/n}'(v_{k+l})-j_{\ZZ/n}(v_{k+l}).$$
Therefore, we have
\[
i\circ\deR(\theta_k^{k+l,*}\sigma_{k+l})=i\circ\deR(\sigma_k).
\]
By injectivity of $i$, we see $\deR(\theta_k^{k+l,*}\sigma_{k+l})=\deR(\sigma_k)$. Now by construction $\xi_{k+l}: F_{k+l}\to \GL(\ZZ/n)$ is $(k+l)$-connected, it follows for $s\le k$ we have
$$H^s(F_{k+l};\RR)\cong H^{s}(\GL(\ZZ/n);\RR)=0.$$
Combined with $H^s(F_k;\RR)=0$ for $s\ge {k+1}$ (since $F_k$ is homotopy equivalent to a $k$-dimensional CW complex), we have $\deR(\sigma_k)=0$. This completes the proof for $\Phi\circ j_{\ZZ/n}'(v_k)=j_{\ZZ/n}(v_k)$.
\end{proof}

\subsection{Uniqueness of $\Phi$}
Finally we argue that the natural transformation $\Phi:\hat{K}'\to \hat{K}$ is unique. From the exact sequences in \Cref{cor:shortseqfromchardiag}, the (additive) difference between any two such natural transformations factors through a natural transformation from $L_{BU}$ into $H^{od}(\RR)/L_U$. Now we precompose this natural transformation with the surjective Chern character map $c: K\to L_{BU}$; the uniqueness of $\Phi$ will follow from that any natural transformation $K\to H^{od}(\RR)/L_U$ is zero. Indeed this follows, by employing the approximation-by-manifolds argument as before, from that the functor $K$ is represented by $\ZZ\times BU$ whose odd real cohomology vanishes.

\section{Rigidity of topological theories}\label{sec6}
We show that the topological theories $K^{-1}_{\RR/\ZZ}$ and $K$ are rigid relative to differential K-theory in the sense below. The first statement shows that there is a unique way of realizing $K^{-1}_{\RR/\ZZ}$ as the flat theory in the sense of \cite{BS10}, the second statement generalizes \cite[Corollary 1.1]{SS08}.
\begin{theorem}\label{rigidity}
	For a differential K-functor, $(\hat{K},i,j,\delta, ch)$,
	\begin{enumerate}[(1)]
		\item $j$ is uniquely determined by $i,\delta,ch$.
		\item $\delta$ is uniquely determined by $i,j,ch$.
	\end{enumerate}
\end{theorem}
\begin{proof}
\begin{enumerate}[(1)]
	\item It follows from \Cref{webdiagram} by an easy diagram tracing that the difference between any two $j$'s factors through a natural transformation $\Tor K\to H^{od}(\RR)/L_U$. To see this must be zero, we precompose it with the surjective Bockstein transformation $K^{-1}_{\QQ/\ZZ}\to \Tor K$, and then note that any natural transformation from $K^{-1}_{\QQ/\ZZ}$ into $H^{od}(\RR)/L_U$ must be zero following the same argument as in \Cref{rigidity of j}.
	\item Similarly we have that the difference between any two $\delta$'s factors through a natural transformation $\rho: \Omega_{BU}\to \Tor K$. Now $\rho$ restricted to $d \Omega^{od}$ must be zero, since $d\Omega^{od}$ is a real vector space so must its image, but $\Tor K$ is a finitely generated abelian group. Therefore $\rho$ descends to a natural transformation $L_{BU}\to \Tor K$, which must be zero. This can be seen by precomposing it with the surjective transformation $K\to L_{BU}$ and noticing that $K(\ZZ\times BU)$ is torsion-free which forces any natural transformation $K\to \Tor K$ to be zero using the approximation-by-manifolds argument.
\end{enumerate}
\end{proof}
\begin{remark}
	The first part of the theorem above also follows from applying \Cref{maintheorem} to $(\hat{K},i,j,\delta,ch)$ and $(\hat{K},i,j',\delta,ch)$. One simply observes that in this case $\Phi$ is the identity map by construction. 
\end{remark}

\bibliographystyle{alpha}
\bibliography{ref}

\begin{thebibliography}{Mat21}

\bibitem[Ada71]{Ad71}
J.~F. Adams.
\newblock A variant of {E}. {H}. {B}rown's representability theorem.
\newblock {\em Topology}, 10:185--198, 1971.

\bibitem[BS10]{BS10}
Ulrich Bunke and Thomas Schick.
\newblock Uniqueness of smooth extensions of generalized cohomology theories.
\newblock {\em J. Topol.}, 3(1):110--156, 2010.

\bibitem[CF66]{CF66}
P.~E. Conner and E.~E. Floyd.
\newblock {\em The relation of cobordism to {$K$}-theories}.
\newblock Lecture Notes in Mathematics, No. 28. Springer-Verlag, Berlin-New
  York, 1966.

\bibitem[CS85]{CS85}
Jeff Cheeger and James Simons.
\newblock Differential characters and geometric invariants.
\newblock In {\em Geometry and topology ({C}ollege {P}ark, {M}d., 1983/84)},
  volume 1167 of {\em Lecture Notes in Math.}, pages 50--80. Springer, Berlin,
  1985.

\bibitem[HS05]{HS05}
M.~J. Hopkins and I.~M. Singer.
\newblock Quadratic functions in geometry, topology, and {M}-theory.
\newblock {\em J. Differential Geom.}, 70(3):329--452, 2005.

\bibitem[Mat21]{MI21}
Ishan Mata.
\newblock Uniqueness of differential characters and differential {K}-theory via
  homological algebra.
\newblock {\em J. Homotopy Relat. Struct.}, 16(2):225--243, 2021.

\bibitem[SS08]{SS08}
James Simons and Dennis Sullivan.
\newblock Axiomatic characterization of ordinary differential cohomology.
\newblock {\em J. Topol.}, 1(1):45--56, 2008.

\bibitem[SS10]{SS10}
James Simons and Dennis Sullivan.
\newblock Structured vector bundles define differential {$K$}-theory.
\newblock In {\em Quanta of maths}, volume~11 of {\em Clay Math. Proc.}, pages
  579--599. Amer. Math. Soc., Providence, RI, 2010.

\bibitem[SS18]{SS18}
James Simons and Dennis Sullivan.
\newblock {Characters for Complex Bundles and their Connections}, 2018.

\bibitem[Tho54]{Th54}
Ren\'{e} Thom.
\newblock Quelques propri\'{e}t\'{e}s globales des vari\'{e}t\'{e}s
  diff\'{e}rentiables.
\newblock {\em Comment. Math. Helv.}, 28:17--86, 1954.

\end{thebibliography}
\end{document}